\newcommand{\rev}[2][red]{\textcolor{#1}{#2}}
\DeclareMathOperator{\Fix}{\mathrm{Fix}}
\DeclareMathOperator{\Id}{\mathrm{Id}}
\DeclareMathOperator{\argmin}{argmin}
\DeclareMathOperator{\dom}{dom}
\DeclareMathOperator{\lev}{lev}
\DeclareMathOperator{\bd}{bd}
\DeclareMathOperator{\cl}{cl}
\DeclareMathOperator{\diam}{diam}
\newcommand{\nat}{\mathbb{N}}
\newcommand{\real}{\mathbb{R}}
\newcommand{\Ball}{\mathbf{B}}
\newcommand{\Sphere}{\mathbf{S}}
\newcommand{\ip}[1]{\mathchoice{\left\langle#1\right\rangle}{\langle#1\rangle}{\langle#1\rangle}{\langle#1\rangle}}
\newcommand{\abs}[1]{\mathchoice{\left\lvert#1\right\rvert}{\lvert#1\rvert}{\lvert#1\rvert}{\lvert#1\rvert}}
\newcommand{\norm}[1]{\mathchoice{\left\lVert#1\right\rVert}{\lVert#1\rVert}{\lVert#1\rVert}{\lVert#1\rVert}}
\newcommand{\Hsp}{H}
\theoremstyle{definition}
\newtheorem{problem}{Problem}[section]
\newtheorem{assum}{Assumption}[section]
\newtheorem{dfn}{Definition}[section]
\theoremstyle{plain}
\newtheorem{example}{Example}[section]
\newtheorem{prop}{Proposition}[section]
\newtheorem{lem}{Lemma}[section]
\newtheorem{cor}{Corollary}[section]
\newtheorem{thm}{Theorem}[section]
\newcommand{\GitHubURL}{\url{https://github.com/iiduka-researches/201811-kaz}}
\newcounter{assumptions}
\newif\ifextended\extendedtrue
\title{Fixed Point Quasiconvex Subgradient Method}
\newcommand{\postaladdress}{1-1-1 Higashimita, Tama-ku, Kawasaki-shi, Kanagawa 214-8571 Japan}
\author[kaz]{Kazuhiro Hishinuma\footnote{Research Fellow of the Japan Society for the Promotion of Science.}}
\address[kaz]{Computer Science Course, Graduate School of Science and Technology, Meiji University, \postaladdress.}
\ead{kaz@cs.meiji.ac.jp}
\author[iiduka]{Hideaki Iiduka}
\address[iiduka]{Department of Computer Science, Meiji University, \postaladdress.}
\ead{iiduka@cs.meiji.ac.jp}
\begin{document}
\begin{abstract}
Constrained quasiconvex optimization problems appear in many fields, such as economics, engineering, and management science.
In particular, fractional programming, which models ratio indicators such as the profit/cost ratio as fractional objective functions, is an important instance.
Subgradient methods and their variants are useful ways for solving these problems efficiently.
Many complicated constraint sets onto which it is hard to compute the metric projections in a realistic amount of time appear in these applications.
This implies that the existing methods cannot be applied to quasiconvex optimization over a complicated set.
Meanwhile, thanks to fixed point theory, we can construct a computable nonexpansive mapping whose fixed point set coincides with a complicated constraint set.
This paper proposes an algorithm that uses a computable nonexpansive mapping for solving a constrained quasiconvex optimization problem.
We provide convergence analyses for constant diminishing step-size rules.
Numerical comparisons between the proposed algorithm and an existing algorithm show that the proposed algorithm runs stably and quickly even when the running time of the existing algorithm exceeds the time limit.
\end{abstract}
\begin{keyword}
Nonlinear programming \sep Fractional programming
\end{keyword}
\maketitle

\section{Introduction}
This paper considers the constrained quasiconvex optimization problem.
This problem is composed of a quasiconvex objective functional and a closed convex constraint set.
We call a functional of which any slice is convex a quasiconvex functional, and the class of this functional is a generalization of convex functionals.
Quasiconvex functionals inherit some nice properties of convex functionals \cite{yaohua2016}.
However, they do not have all the important properties of convex functionals, such as convexity of the sum of convex functionals, or give a guarantee of the coincidence of local optimality and global optimality.
Therefore, the constrained quasiconvex optimization problem is difficult to solve in general.

\label{M:13}Fractional programming is an important instance of constrained quasiconvex optimization problems.
In economics, there are various situations in which one optimizes ratio indicators, such as the debt/equity ratio (in financial and corporate planning), inventory/sales and output/employee ratios (in production planning), and cost/patient and nurse/patient ratios (in health care and hospital planning) \cite{stancu1997}.
Under certain conditions, these ratio indicators, fractional objective functionals in other words, have quasiconvexity \cite[Lemma~3]{kiwiel2001}.
Therefore, these problems can be dealt with as constrained quasiconvex optimizations.
Here, we will examine the numerical behaviors of the existing and proposed algorithms when they are applied to the Cobb-Douglas production efficiency problem \cite[Problem~(3.13)]{bradley1974}, \cite[Problem~(6.1)]{yaohua2015}, \cite[Section~1.7]{stancu1997}, which is an instance of a fractional programming and constrained quasiconvex optimization problem.
Furthermore, the demand for techniques to solve optimization problems is nowadays not only limited to convex objectives.
In particular, optimization problems whose objective functionals are quasiconvex have appeared in economics, engineering, and management science \cite{yaohua2015,yaohua2016}.
Therefore, this paper builds an algorithm that can efficiently solve constrained quasiconvex optimization problems even if they have some complexity.

Subgradient methods with the usual Fenchel subdifferential, an expansion of the gradient for nonsmooth functionals, are useful for solving problems in convex optimization \cite[Section~8.2]{bertsekas2003}, \cite{iiduka2015,iiduka2016,iiduka2016oms,iiduka2016ejor,larsson1996}.
\label{M:6}We need to use an alternative notion of the usual Fenchel subdifferential since the usual Fenchel subdifferential is defined for a convex functional \cite[Subsection~2.1]{censor2006}, \cite[Subsection~2.2]{kiwiel2001}, \cite[Proposition~8.12]{rockafellar}.
Indeed, the usual Fenchel subdifferential may be empty even for a differential nonconvex functional \cite[Subsection~2.1]{censor2006}.
This implies that we cannot use it directly to solve quasiconvex optimization problems.
\label{M2:1}Fortunately, various, extended subdifferentials for nonconvex functionals have been proposed \cite[Section~4]{penot1998}, \cite[Definition~8.3]{rockafellar}.
As an instance of them, we can define subdifferentials for quasiconvex or more general functionals by the procedure described in \cite[Section~4]{penot1998} to construct them with directional derivatives.
These subdifferentials inherit some of the properties, called axioms for subdifferentials \cite[Axioms~($S_1$--$S_4$)]{penot1998}, from the usual Fenchel subdifferential for convex functionals; however, they may not be easily computable.
Furthermore, an essential issue is that a local minimizer might not coincide with the global minimizer in quasiconvex optimization.
This issue reduces the subdifferential till it contains only one vector, i.e., the zero vector, meaning that the methods lose any clue as to the direction of the global minimizer.
Hence, we cannot ensure the convergence of the generated sequence to the global minimizer of the quasiconvex optimization problem when the usual subgradient methods are used.
For the unconstrained quasiconvex optimization problem, Konnov \cite{konnov2003} introduced a subgradient method that uses a normalized normal vector to the slice at a current approximation as a subgradient.
This idea overcomes the above issue, since there certainly exists a nonzero normal vector to the slice which indicates the direction to the minimizer even if the current approximation is a non-global local minimizer.

Kiwiel \cite{kiwiel2001} proposed a subgradient method that uses a normalized normal vector to the slice as a subgradient (we will call it a subgradient throughout this paper) for solving the constrained quasiconvex optimization problem.
Hu et al. \cite{yaohua2015} analyzed its convergence properties when inexact subgradients are used and/or when it includes computational errors.
Furthermore, a number of subgradient-method variants exist for solving quasiconvex optimization problems, such as the conditional subgradient methods \cite{yaohua2016} and the stochastic subgradient method \cite{yaohua2016b}.

The existing methods assume the computability of the metric projection onto the constraint set, because they use the metric projection to guarantee that the solution is in the constraint set.
The metric projection onto the constraint set is defined as a mapping which translates a given point into the nearest point inside the constraint set.
Therefore, in general, we have to solve a subproblem of minimizing the distance from a given point subject to the solution being in the constraint set.
Certainly, there are some sets onto which the metric projections can be computed easily, such as boxes \cite[Proposition~29.15]{bc}, closed balls \cite[Example~3.18 and Proposition~3.19]{bc}, \cite[Section~4]{sakurai2014}, and closed half-spaces \cite[Example~29.20]{bc}.
However, various complicated sets on which computing the metric projections is difficult appear in practical problems \cite{combettes1999,iiduka2013,iiduka2016ejor,iiduka2017,yamada2001}.
Therefore, we have to develop a new algorithm that can run lightly and quickly even when it is difficult to compute the metric projection onto the constraint set.

On the other hand, if the constraint set can be expressed as a fixed point set of or the intersection of some fixed point sets of nonexpansive mapping(s), there are algorithms that use these nonexpansive mappings instead of the metric projection for convex optimization \cite{iiduka2015,iiduka2016,iiduka2016oms,iiduka2016ejor}.
Fixed point sets of nonexpansive mappings have great powers of expression.
Any metric projection onto a closed convex set is also a nonexpansive mapping whose fixed point set coincides with these sets \cite[Proposition~4.16]{bc}.
We can build a nonexpansive mapping whose fixed point set coincides with the intersection of the fixed point sets of two or more given nonexpansive mappings \cite[Proposition~4.9, 4.47]{bc}.
Furthermore, there are complicated convex sets called generalized convex feasible sets that are defined by closed convex sets whose intersection may be empty.
They can also be expressed using concrete nonexpansive mappings \cite[Definition~(8)]{iiduka2016ejor}, \cite[Definition~(50)]{yamada2001}.
The algorithms listed at the beginning of this paragraph use nonexpansive mappings instead of metric projections onto the constraint sets.
Therefore, if these nonexpansive mappings can be more easily computed than the metric projections, it can also be expected that their algorithms will run more efficiently than algorithms which use metric projections directly.

The existing algorithms for solving convex optimization problems over fixed point sets of nonexpansive mappings are realized by combining a fixed point iterator, which generates a sequence converging to some fixed point of a given nonexpansive mapping, with the existing subgradient methods.
The Krasnosel'skii-Mann iterator \cite{krasnoselskii1955,mann1953} and Halpern iterator \cite{halpern1967} are useful fixed point iterators for finding a fixed point of given nonexpansive mapping.
Both generate a sequence converging to some fixed point of a given nonexpansive mapping.

In contrast to the existing literature, this paper proposes a novel algorithm which minimizes a given quasiconvex functional over the fixed point set of a given nonexpansive mapping.
To realize this algorithm, we combine the Krasnosel'skii-Mann iterator \cite{krasnoselskii1955,mann1953} with the existing subgradient method \cite{kiwiel2001} for solving quasiconvex optimization problems.
The goal of this paper is to show that our algorithm can solve constrained quasiconvex optimization problems whose constraint set is too complex for the existing algorithms to solve in a realistic amount of time.

This paper offers three contributions.
The first is to provide a widely applicable algorithm for solving constrained quasiconvex optimization problems.
The nonexpansive mappings are an extended notion of the metric projection, since the metric projection is also nonexpansive.
Therefore, this paper allows more varied modeling for constrained quasiconvex optimization problems.

The second contribution is to present the theoretical convergence properties of our algorithm.
We analyzed the convergence properties for constant and diminishing step-size rules.
These results show by how much the error increases when a constant step-size rule is adopted what conditions are required for the generated sequence to converge to the solution of the optimization problem.

The last contribution is to overcome the issue of the existing methods; that is, we show that the proposed algorithm can solve problems whose metric projections onto constraint sets cannot be easily computed.
We conduct a numerical comparison of our algorithm and the existing algorithm.
The results show that our algorithm can solve actual problems even when the constraint sets are too complex to find the metric projection onto them and when the existing algorithm cannot run in a realistic amount of time.

This paper is organized as follows.
Section~\ref{sec:preliminaries} gives the mathematical preliminaries.
Section~\ref{sec:convergence} defines our algorithm and presents its convergence analyses.
Section~\ref{sec:experiment} shows numerical comparisons between the proposed algorithm and the existing subgradient method, by solving a constrained quasiconvex optimization problem named the Cobb-Douglas production efficiency problem.
Section~\ref{sec:conclusion} concludes this paper. 
The Appendices include miscellaneous propositions, lemmas, and their proofs.

\section{Mathematical Preliminaries}\label{sec:preliminaries}
First, we present the main problem considered in this paper, i.e., Problem~\ref{prob:main}, which is called a \emph{constrained quasiconvex optimization problem}.
\begin{problem}\label{prob:main}
Let $\Hsp$ be a real Hilbert space with inner product $\ip{\cdot,\cdot}$ and its induced norm $\norm{\cdot}$, and let $f$ be a continuous functional on $\Hsp$.
In addition, suppose that the functional $f$ has \emph{quasiconvexity}, i.e. $f((1-\alpha)x+\alpha y)\le\max\{f(x), f(y)\}$ holds for any $x,y\in\Hsp$ and for any $\alpha\in[0,1]$.
Let $X,D$ be nonempty closed convex subsets of $\Hsp$.
Then, we would like to
\label{M:8}\begin{align*}
\text{minimize }f(x)
\text{ subject to }x\in X\cap D.
\end{align*}
\end{problem}
We define the \emph{set of minima} and the \emph{minimum value} of Problem~\ref{prob:main} by $X^\star:=\argmin_{x\in X\cap D}f(x)$ and $f_\star:=\inf_{x\in X\cap D}f(x)$, respectively.

We use the following notation in this paper.
$\nat$ is the set of natural numbers without zero, and $\real$ is the set of real numbers.
$\Ball:=\{x\in\Hsp:\norm{x}\le 1\}$ is the unit ball in Hilbert space, and $\Sphere:=\{x\in\Hsp:\norm{x}=1\}$ is the unit sphere in that space.
$\Id$ is the identity mapping of $\Hsp$ onto itself.
The boundary of a set $C\subset\Hsp$ is denoted by $\bd C$, and the closure of this set is denoted by $\cl C$.
The metric projection onto a closed, convex set $C\subset\Hsp$ is denoted by $P_C$ and defined as $P_C(x)\in C$ and $\norm{x-P_C(x)}=\inf_{y\in C}\norm{x-y}$ for any $x\in\Hsp$.
For any $\alpha\in\real$, the $\alpha$-slice of a functional $f:\Hsp\to\real$ is denoted by $\lev_{<\alpha}f:=\{x\in\Hsp:f(x)<\alpha\}$.
The fixed point set of a mapping $T:\Hsp\to\Hsp$ is denoted by $\Fix(T):=\{x\in\Hsp:T(x)=x\}$.

This paper makes full use of the nonexpansivity of some mapping for analyzing the convergence of the proposed algorithm.
Hence, let us define two kinds of nonexpansive condition.
A mapping $T:\Hsp\to\Hsp$ is said to be nonexpansive if $\norm{T(x)-T(y)}\le\norm{x-y}$ for any $x,y\in\Hsp$, and it is said to be firmly nonexpansive if $\norm{T(x)-T(y)}^2+\norm{(\Id-T)x-(\Id-T)y}^2\le\norm{x-y}^2$ for any $x,y\in\Hsp$.
Obviously, a firmly nonexpansive mapping is also a nonexpansive mapping \cite[Subchapter~4.1]{bc}. The properties of these nonexpansivities are described in detail in \cite[Chapter~4]{bc}, \cite[Chapter~6]{wataru}.

Useful algorithms for solving Problem~\ref{prob:main} were proposed in \cite{yaohua2015,yaohua2016,kiwiel2001,konnov2003}.
However, they assume that the metric projection onto the set $X$ can be computed explicitly.
Unfortunately, there are many complicated convex sets onto which constructing and/or computing the projection are difficult \cite{combettes1999,iiduka2016ejor,iiduka2017,yamada2001}.
This paper assumes a weaker and expanded condition for the constraint set $X$, only requiring the existence of a certain nonexpansive mapping expressing this set.
Below, we list the conditions assumed throughout in this paper.
\begin{assum}\label{assum:basic}
We suppose that
\begin{enumerate}[({A}1)]
\item
the effective domain $\dom(f):=\{x\in\Hsp:f(x)<\infty\}$ coincides with the whole space $\Hsp$;
\item\label{a2:fixedpoint}
there exists some firmly nonexpansive mapping $T:\Hsp\to\Hsp$ whose fixed point set $\Fix(T)$ coincides with the constraint set $X$;
\item\label{a3:nonempty}
the constraint set $X=\Fix(T)$ and the feasible set $X\cap D$ are nonempty and there exists at least one minima, i.e. $X^\star\neq\emptyset$.
\setcounter{assumptions}{\value{enumi}}
\end{enumerate}
\end{assum}

Assumptions (A\ref{a2:fixedpoint}--\ref{a3:nonempty}) mean that any closed convex sets which can be expressed as a fixed point set of some (firmly) nonexpansive mapping are accepted as constraint sets.
Fixed point sets of nonexpansive mappings can express a variety of constraint sets, including not only the sets onto which the metric projections can be calculated such as is used in the existing literature \cite{yaohua2015,yaohua2016,kiwiel2001}, but also complicated sets onto which metric projections cannot be easily calculated \cite{combettes1999,iiduka2016ejor,iiduka2017,yamada2001}.

We can construct more complex sets by combining simpler nonexpansive mappings.
The following proposition gives the fundamental, variously applicable transformations for building nonexpansive mappings.
\begin{prop}\label{prop:trans}
Let $T_1, T_2, \ldots, T_N:\Hsp\to\Hsp$ be nonexpansive mappings (including metric projections onto some convex sets), and suppose that the intersection of these fixed point sets is nonempty.
Let $P_C$ be a metric projection onto a nonempty, closed, convex set $C\subset\Hsp$, and assume that $C\cap\Fix(T_1)\neq\emptyset$.
Let $\alpha\in(0,1/2]$.
Then,
\begin{enumerate}[\rm ({T}1)]
\item\label{t:average}
the mapping $\sum_{i=1}^NT_i/N$ is also a nonexpansive mapping, and its fixed point set coincides with $\bigcap_{i=1}^N\Fix(T_i)$ \cite[Propositions~4.9 and 4.47]{bc};
\item\label{t:firmup}
the mapping $\alpha\Id+(1-\alpha)T_1$ is firmly nonexpansive mapping, and its fixed point set coincides with $\Fix(T_1)$ \cite[Remark~4.37 and Proposition~4.47]{bc}.
\end{enumerate}
\end{prop}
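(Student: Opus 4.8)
The plan is to treat the two items separately, splitting off in each the routine nonexpansivity claim from the identification of the fixed point set. For item (T1), nonexpansivity of $\sum_{i=1}^N T_i/N$ follows at once from the triangle inequality together with $\norm{T_i x-T_i y}\le\norm{x-y}$, and the inclusion $\bigcap_{i=1}^N\Fix(T_i)\subseteq\Fix\bigl(\sum_{i=1}^N T_i/N\bigr)$ is immediate, so the only real content is the reverse inclusion. I would take $x$ with $x=\sum_{i=1}^N T_i x/N$, fix some $z\in\bigcap_{i=1}^N\Fix(T_i)$ (nonempty by hypothesis), and put $w_i:=T_i x-z$; then the arithmetic mean of the $w_i$ equals $\bar w:=x-z$ (because $x=\sum_{i=1}^N T_i x/N$), while $\norm{w_i}=\norm{T_i x-T_i z}\le\norm{x-z}=\norm{\bar w}$ for every $i$. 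The Hilbert-space identity $\frac{1}{N}\sum_i\norm{w_i}^2=\norm{\bar w}^2+\frac{1}{N}\sum_i\norm{w_i-\bar w}^2$, combined with the bounds $\norm{w_i}\le\norm{\bar w}$, then forces $\frac{1}{N}\sum_i\norm{w_i-\bar w}^2=0$, hence $w_i=\bar w$, i.e. $T_i x=x$, for each $i$. Both conclusions are also contained in the cited \cite[Propositions~4.9 and 4.47]{bc}.

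For item (T2), the fixed point statement is the easy half: $x=\alpha x+(1-\alpha)T_1 x$ is equivalent to $(1-\alpha)(x-T_1 x)=0$, hence to $x\in\Fix(T_1)$ because $1-\alpha\neq0$. Firm nonexpansivity I would obtain directly from \cite[Remark~4.37]{bc}, which identifies when a convex combination $\alpha\Id+(1-\alpha)T_1$ of the identity and a nonexpansive mapping is firmly nonexpansive, with \cite[Proposition~4.47]{bc} again supplying the fixed point set. A self-contained verification would expand the two squared norms in the definition of firm nonexpansivity applied to $S:=\alpha\Id+(1-\alpha)T_1$, using the identity $\norm{\lambda a+(1-\lambda)b}^2=\lambda\norm{a}^2+(1-\lambda)\norm{b}^2-\lambda(1-\lambda)\norm{a-b}^2$ and the bound $\norm{T_1 x-T_1 y}\le\norm{x-y}$, and then confirm that the outcome is at most $\norm{x-y}^2$.

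The step I expect to be the crux is the reverse inclusion in item (T1); everything else is a one-line manipulation or a direct appeal to the listed Bauschke--Combettes results. What makes that step go through is the Hilbert-space geometry --- concretely the strict convexity of $\norm{\cdot}^2$ encoded in the variance identity above --- so that equality cannot persist through the averaged triangle inequality unless every $T_i$ already fixes $x$; in a merely normed setting $\Fix\bigl(\sum_{i=1}^N T_i/N\bigr)$ could be strictly larger than $\bigcap_{i=1}^N\Fix(T_i)$.
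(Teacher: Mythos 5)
Your treatment of (T1) is correct and is essentially the argument behind the cited result \cite[Proposition~4.47]{bc} (the paper itself gives no proof of this proposition, only the references): nonexpansivity by the triangle inequality, the trivial inclusion, and then the variance identity plus $\norm{w_i}\le\norm{\bar w}$ to force $T_ix=x$, using the nonempty intersection to pick $z$. That part is fine.

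The gap is in (T2). Your ``self-contained verification'' does not close on the stated range $\alpha\in(0,1/2]$. With $u:=x-y$, $v:=T_1x-T_1y$ and $S:=\alpha\Id+(1-\alpha)T_1$, the expansion you propose gives
\begin{align*}
\norm{Sx-Sy}^2+\norm{(\Id-S)x-(\Id-S)y}^2
=\alpha\norm{u}^2+(1-\alpha)\norm{v}^2+(1-\alpha)(1-2\alpha)\norm{u-v}^2,
\end{align*}
so the required bound $\le\norm{u}^2$ amounts to $\norm{v}^2+(1-2\alpha)\norm{u-v}^2\le\norm{u}^2$. For $\alpha=1/2$ this is exactly nonexpansivity of $T_1$ and the proof closes; for $\alpha<1/2$ the extra term has a strictly positive coefficient and $\norm{v}\le\norm{u}$ is not enough --- indeed the claim is false there: take $T_1:=-\Id$ and $\alpha:=1/4$, so $S=-\tfrac12\Id$, which is not firmly nonexpansive. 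Firm nonexpansivity is $\tfrac12$-averagedness, while $\alpha\Id+(1-\alpha)T_1$ is $(1-\alpha)$-averaged, so the correct hypothesis is that the weight on the identity be at least $1/2$, i.e.\ $\alpha\in[1/2,1)$ (this is what \cite[Remark~4.34(iii)]{bc} encodes, and the paper only ever uses $\alpha=1/2$, e.g.\ $T=(\Id+\tilde T)/2$). So the step ``confirm that the outcome is at most $\norm{x-y}^2$'' would fail as stated; you should either restrict to $\alpha=1/2$ (or $\alpha\ge 1/2$) or flag that the printed range $(0,1/2]$ cannot be right beyond that endpoint. The fixed-point identification in (T2) is, as you say, immediate.
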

The transformation~(T\ref{t:average}) ensures that we can make a nonexpansive mapping whose fixed point set coincides with the intersection of the fixed point sets of two or more nonexpansive mappings.
The transformation~(T\ref{t:firmup}) provides us with a way to convert any nonexpansive mapping into a firmly nonexpansive mapping whose fixed point sets correspond with the given one.
Our GitHub repository (URL: \GitHubURL{}) provides these implementations of the transformations (T\ref{t:average} and T\ref{t:firmup}) as higher-order functions \texttt{average} and \texttt{firm\_up} in Python.
By using our code, the reader can easily make a nonexpansive mapping expressing his or her desired constraint set.

Furthermore, let us examine an instance of convex sets that can be expressed as fixed point sets of some nonexpansive mappings, called the \emph{generalized convex feasible sets} \cite[Definition~(10)]{iiduka2016ejor}, \cite[Subsection~4.B]{yamada2001}.
Here, let us consider several closed convex sets $X_i\subset\Hsp$ for $i=0,1,\ldots,K$, and suppose that the metric projections $\{P_{X_i}\}_{i=0}^K$ onto these convex sets $\{X_i\}_{i=0}^K$ can be easily calculated.
If the intersection of these sets is not empty, we can use the transformation and construction procedures described before to make a nonexpansive mapping whose fixed point set coincides with it.
Hence, let us consider the opposite case; that is, there is a possibility that the intersection of the sets $\{X_i\}_{i=1}^K$ is empty.
Then, we cannot use the straightforward way because the emptiness of the constraint set violates Assumption~(A\ref{a3:nonempty}).
To design an alternative constraint set, let us define a functional~\cite[Definition~(8)]{iiduka2016ejor}, \cite[Definition~(50)]{yamada2001}
\begin{align}
g(x):=\frac{1}{2}\sum_{i=1}^Kw_i\left(\min_{y\in X_i}\norm{x-y}\right)^2\quad(x\in\Hsp), \label{dfn:gfcs}
\end{align}
where $\sum_{i=1}^Kw_i=1$. This functional $g$ stands for the mean square value from the point $x$ to the sets $\{X_i\}_{i=1}^K$ with respect to the weights $\{w_i\}_{i=1}^K$. Therefore, we can consider the set of points which minimize this functional \cite[Definition~(10)]{iiduka2016ejor}, \cite[Definition~(50)]{yamada2001},
\begin{align*}
X_g:=\left\{x\in X_0:g(x)=\min_{y\in X_0}g(y)\right\},
\end{align*}
as an alternative constraint set in terms of the mean square norm.
This set is called the generalized convex feasible set.
We can construct a nonexpansive mapping whose fixed point set coincides with this set, and thus, we can deal with the minimization problem over this constraint set by using the algorithm presented later.
The way to construct this nonexpansive mapping is described in \cite[Definition~(9)]{iiduka2016ejor}, \cite[Definition~(50)]{yamada2001}.

\label{M2:2}Various subdifferentials have been proposed for quasiconvex functionals, such as the classical subdifferential \cite[Definition~(10)]{kiwiel2001}, the Greenberg-Pierskalla subdifferential \cite[Section~3]{greenberg1973} and its variants such as the star subdifferential \cite[Definition~7]{censor2006}, Plastria's lower subdifferential \cite[Section~2]{plastria1985}, and so on \cite[Subsection~2.1]{censor2006}, \cite[Definition~(6)--(9)]{kiwiel2001}, \cite[Section~5]{penot1998}.
The Greenberg-Pierskalla subdifferential is one of the most important concepts of subdifferentials for generalized convex functionals because it is a general notion that can be easily handled \cite[Section~5]{penot1998}.
However, it does not account for the norm of its subgradients and gives only directions.
Plastria's lower subdifferential is proposed as another important concept whose properties are closer to those of the usual Fenchel subdifferential for convex functionals \cite[Section~5]{penot1998}.
In this paper, conforming to \cite{yaohua2015,yaohua2016,konnov2003}, we use the \emph{subdifferential} defined as the normal cones to the slice of the functional $f$.
That is, given a point $x\in\Hsp$, we call the set
\begin{align*}
\partial^\star f(x):=\{g\in\Hsp:\ip{g,y-x}\le 0\ (y\in\lev_{<f(x)}f)\}
\end{align*}
the subdifferential of the quasiconvex functional $f$ at a point $x\in\Hsp$ \cite[Definition~2.3]{yaohua2015}, \cite[Definition~2.1]{yaohua2016}, \cite[Definition~(9)]{kiwiel2001}, \cite[Section~1]{konnov2003}.
We also call its element a \emph{subgradient}.

This subdifferential $\partial^\star f(\cdot)$ has some favorable properties, as listed in the following proposition.
\begin{prop}[{\cite[Proposition~6.2.4]{bc}, \cite[Lemma~3]{kiwiel2001}, \cite[Propositions~6 and 8]{penot1998}}]\label{M:7}
  Suppose that Assumption~\ref{assum:basic} holds, and assume that $f$ is a continuous quasiconvex functional.
  Then, the following hold.
  \begin{enumerate}[\rm({P}1)]
    \item\label{p2} $\partial^\star f(\cdot)$ coincides with the closure of the Greenberg-Pierskalla subdifferential, i.e., the union of the Greenberg-Pierskalla subdifferential and the singleton set $\{0\}$.
    \item\label{p4} The Greenberg-Pierskalla subdifferential, Plastria's lower subdifferential, and the usual Fenchel subdifferential are contained in the subdifferential $\partial^\star f(x)$ for any $x\in\Hsp$.\footnote{Furthermore, the other four kinds of subdifferential presented in \cite[Section~5]{penot1998} are also contained in the subdifferential $\partial^\star f(x)$. Please refer to \cite[Proposition~6]{penot1998} for more details.}
    \item\label{p1} For any $x\in\Hsp$, the subdifferential $\partial^\star f(x)$ is nonempty, and also contains some nonzero vector.
    \item\label{p3} $\partial^\star f(\cdot)$ is a nonempty closed convex cone.
  \end{enumerate}
\end{prop}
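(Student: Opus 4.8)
The plan is to reduce each of the four claims either to an elementary convexity/separation argument or to one of the cited results, and it is cleanest to settle the structural statements (P3) and (P4) first, since both rest on the description of $\partial^\star f(x)$ as an intersection of homogeneous closed half-spaces. For (P4), write $S:=\lev_{<f(x)}f$; each set $\{g\in\Hsp:\ip{g,y-x}\le 0\}$ with $y\in S$ is a closed convex cone (a half-space whose bounding hyperplane passes through the origin), $\partial^\star f(x)$ is the intersection of all of these, and an arbitrary intersection of closed convex cones is again a closed convex cone, nonempty since $0$ always belongs to it (and $\partial^\star f(x)=\Hsp$ when $S=\emptyset$); this is the normal-cone picture behind \cite[Proposition~6.2.4]{bc}. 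For (P3), the membership $0\in\partial^\star f(x)$ is immediate, so only the existence of a nonzero subgradient requires work: if $x$ is not a global minimizer of $f$ on $\Hsp$, then $S$ is nonempty, convex by quasiconvexity of $f$ and open by continuity of $f$, with $x\notin S$, and separating the point $x$ from the open convex set $S$ via the Hahn-Banach theorem produces a nonzero $g\in\Hsp$ with $\ip{g,y-x}<0$ for every $y\in S$, so $g\in\partial^\star f(x)\setminus\{0\}$; if instead $x$ is a global minimizer then $S=\emptyset$ and $\partial^\star f(x)=\Hsp$. This reproves \cite[Lemma~3]{kiwiel2001}.

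For (P2), each inclusion follows by restricting the defining inequality of the subdifferential in question to the slice $S$. Writing $\partial^{GP}f(x)$ for the Greenberg-Pierskalla subdifferential at $x$: if $g\in\partial^{GP}f(x)$, then by definition $\ip{g,y-x}<0$ for $y\in S$, in particular $\ip{g,y-x}\le 0$; if $g$ lies in Plastria's lower subdifferential, then $\ip{g,y-x}\le f(y)-f(x)$ for all $y\in S$ and the right-hand side is negative there; and if $g\in\partial f(x)$ in the usual Fenchel sense, then $f(y)\ge f(x)+\ip{g,y-x}$ for every $y\in\Hsp$, so $\ip{g,y-x}\le f(y)-f(x)<0$ on $S$. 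In all three cases $g\in\partial^\star f(x)$, and the remaining inclusions recorded in \cite[Proposition~6]{penot1998} are obtained the same way.

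Finally, (P1). One direction is clear: $\partial^{GP}f(x)\subseteq\partial^\star f(x)$ by (P2), and $\partial^\star f(x)$ is closed by (P4), so $\cl(\partial^{GP}f(x))\subseteq\partial^\star f(x)$. For the converse, suppose $S\neq\emptyset$ and take a nonzero $\bar g\in\partial^{GP}f(x)$; such a $\bar g$ exists because the separation argument in (P3) already delivers the strict inequality $\ip{\bar g,y-x}<0$ on $S$. Given any $g\in\partial^\star f(x)$, set $g_t:=(1-t)g+t\bar g$ for $t\in(0,1]$; then $\ip{g_t,y-x}=(1-t)\ip{g,y-x}+t\ip{\bar g,y-x}<0$ for all $y\in S$, so $g_t\in\partial^{GP}f(x)$ and $g_t\to g$ as $t\downarrow 0$, hence $g\in\cl(\partial^{GP}f(x))$; the case $S=\emptyset$ is immediate since then both $\partial^\star f(x)$ and $\partial^{GP}f(x)$ equal $\Hsp$. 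The step I expect to be the main obstacle, and for which I would invoke \cite[Proposition~8]{penot1998} rather than reprove it, is the final identification $\cl(\partial^{GP}f(x))=\partial^{GP}f(x)\cup\{0\}$ --- that adjoining the single vector $0$ already closes the Greenberg-Pierskalla subdifferential --- together with the degenerate cases (global minimizer, or $\partial^{GP}f(x)$ collapsing to a ray or to $\{0\}$): these are precisely where the quasiconvexity and continuity of $f$ must be used with care.
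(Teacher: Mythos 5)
Your proposal is correct, but it cannot be said to follow ``the paper's proof'': the paper does not prove this proposition at all, it simply states it as a compilation of known results and points to \cite[Proposition~6.2.4]{bc}, \cite[Lemma~3]{kiwiel2001} and \cite[Propositions~6 and 8]{penot1998}. Your route is therefore genuinely different in that it is essentially self-contained: (P4) via the description of $\partial^\star f(x)$ as an intersection of closed half-spaces through the origin, (P3) via Hahn--Banach separation of $x$ from the nonempty open convex slice $\lev_{<f(x)}f$ (openness from continuity, convexity from quasiconvexity), (P2) by restricting each defining inequality to the slice, and the identity $\partial^\star f(x)=\cl(\partial^{GP}f(x))$ by the closedness of $\partial^\star f(x)$ in one direction and the convex-combination perturbation $g_t=(1-t)g+t\bar g$ in the other --- all of these steps are sound, including the degenerate case $\lev_{<f(x)}f=\emptyset$. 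The one step you defer to \cite[Proposition~8]{penot1998}, namely $\cl(\partial^{GP}f(x))=\partial^{GP}f(x)\cup\{0\}$, is not actually an obstacle and needs no citation: it follows in one line from the same openness you already exploited. If $g\in\partial^\star f(x)$ is nonzero and $\ip{g,y_0-x}=0$ for some $y_0\in\lev_{<f(x)}f$, then $z:=y_0+\epsilon g/\norm{g}$ lies in the slice for small $\epsilon>0$ and gives $\ip{g,z-x}=\epsilon\norm{g}>0$, a contradiction; hence every nonzero element of $\partial^\star f(x)$ is already a Greenberg--Pierskalla subgradient, so $\partial^\star f(x)=\partial^{GP}f(x)\cup\{0\}$, which combined with your equality $\partial^\star f(x)=\cl(\partial^{GP}f(x))$ closes the argument. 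Adding that observation would make your proof fully independent of the cited references, which is more than the paper itself provides.
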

First, we defined the subdifferential $\partial^\star f(\cdot)$ as the closure of the Greenberg-Pierskalla subdifferential, such as is shown in Proposition~(P\ref{p2}).
This implies that the subdifferential $\partial^\star f(\cdot)$ is an extension of the Greenberg-Pierskalla subdifferential, and some properties of this subdifferential can also be used.
For example, $\partial^\star f(x)$ coincides with the whole space $\Hsp$ if $x$ is a minimizer of $f$.
This proposition ensures that the subdifferential $\partial^\star f(\cdot)$ coincides with the closure of the Greenberg-Pierskalla subdifferential, which is not always closed \cite[Subsection~2.1]{yaohua2015}.
Hence, this subdifferential $\partial^\star f(\cdot)$ overcomes the problem of the non-closedness of the Greenberg-Pierskalla subdifferential; it has been used in the recent literature \cite{yaohua2015,yaohua2016b,yaohua2016}.
Furthermore, as shown in Proposition (P\ref{p4}), the subdifferential is also a superset of Plastria's lower subdifferential and the usual Fenchel subdifferential.
Since the subdifferential $\partial^\star f(\cdot)$ is a cone as shown in Proposition (P\ref{p3}), this property ensures that every arbitrarily scaled element of the Plastria's lower subdifferential or the usual Fenchel subdifferential can be used as a subgradient in the discussion of this paper.
Proposition~(P\ref{p1}) ensures the existence of nonzero subgradients at all points.
This fact guarantees that the algorithm described later can always find a subgradient, which is required for the computation.
In addition, Proposition~(P\ref{p3}) shows that the normalized vector of a subgradient is also a subgradient.
Our algorithm implicitly uses this property for choosing a subgradient whose norm is 1.

A subgradient in $\partial^\star f(\cdot)$ is computable when, for example, the functional is formed as a fractional function, a typical instance of a quasiconvex function, with concrete conditions.
The following proposition gives the conditions for the quasiconvexity and subgradient computability of fractional functions.
\begin{prop}[{\cite[Lemma~3 (i), 4]{kiwiel2001}}]\label{prop:fraction}
Let $a$ be a convex functional on $\Hsp$, and let $b$ be a finite, positive functional on $\Hsp$.
Suppose that $f(x):=a(x)/b(x)$ for any $x\in\Hsp$, the interior of $\dom(f)$ is convex, and one of the following conditions holds:
\begin{enumerate}[(i)]
\item $b$ is affine;
\item $a$ is nonnegative on the interior of $\dom(f)$ and $b$ is concave;
\item $a$ is nonpositive on the interior of $\dom(f)$ and $b$ is convex.
\end{enumerate}
Then, the functional $f$ is a quasiconvex functional on the interior of $\dom(f)$.
Furthermore, the functional $(a-\alpha b)(\cdot)$ is convex and $\partial(a-\alpha b)(x)\subset\partial^\star f(x)$ for any $x\in\Hsp$, where $\alpha:=f(x)$ and $\partial(a-\alpha b)$ is the usual Fenchel subdifferential of the functional $(a-\alpha b)(\cdot)$.
\end{prop}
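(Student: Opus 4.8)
The plan is to prove the three assertions in turn: quasiconvexity of $f$ on the interior of $\dom(f)$, convexity of the auxiliary functional $(a-\alpha b)(\cdot)$ with $\alpha=f(x)$, and the inclusion $\partial(a-\alpha b)(x)\subset\partial^\star f(x)$. For quasiconvexity I would use the standard fact that a functional is quasiconvex on a convex set exactly when each of its sublevel sets intersected with that set is convex; the hypothesis that $\mathrm{int}(\dom f)$ is convex makes this applicable. The observation that drives everything is that, since $b$ is everywhere positive, for any $\gamma\in\real$ and any $x$ one has $f(x)\le\gamma$ if and only if $(a-\gamma b)(x)\le 0$; hence the sublevel set $\{x\in\mathrm{int}(\dom f):f(x)\le\gamma\}$ equals the intersection of the convex set $\mathrm{int}(\dom f)$ with the $0$-sublevel set of $a-\gamma b$, and it suffices to check in each case that either $a-\gamma b$ is convex or that this set is trivial. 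This is where the three alternatives enter, through a short sign analysis in $\gamma$: under~(i), $a-\gamma b$ is a convex functional minus an affine one, hence convex, for every $\gamma$; under~(ii), $a-\gamma b=a+\gamma(-b)$ is convex whenever $\gamma\ge 0$, while for $\gamma<0$ the conditions $a\ge 0$ and $b>0$ on $\mathrm{int}(\dom f)$ force $f\ge 0>\gamma$ there, so the sublevel set is empty; under~(iii) the situation is symmetric, with $a-\gamma b=a+(-\gamma)b$ convex for $\gamma\le 0$ and the sublevel set equal to all of $\mathrm{int}(\dom f)$ when $\gamma>0$ because $a\le 0$ and $b>0$ force $f\le 0<\gamma$. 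In every case the sublevel set is convex, so $f$ is quasiconvex on $\mathrm{int}(\dom f)$.

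For the convexity of $(a-\alpha b)(\cdot)$, fix $x\in\mathrm{int}(\dom f)$ and put $\alpha:=f(x)$. Under~(i) it is again a convex functional minus an affine one. Under~(ii), $\alpha=a(x)/b(x)\ge 0$ since $a(x)\ge 0$ and $b(x)>0$, so $a-\alpha b=a+\alpha(-b)$ is a sum of the convex functional $a$ and a nonnegative multiple of the convex functional $-b$; under~(iii), $\alpha\le 0$, so $a-\alpha b=a+(-\alpha)b$ is likewise a sum of convex functionals. Hence in all cases $a-\alpha b$ is convex, and $\partial(a-\alpha b)(x)$ is its ordinary Fenchel subdifferential.

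For the inclusion, take $g\in\partial(a-\alpha b)(x)$ with $\alpha=f(x)$. Since $b(x)>0$ we have $\alpha b(x)=a(x)$, hence $(a-\alpha b)(x)=0$, so the subgradient inequality becomes $(a-\alpha b)(y)\ge\ip{g,y-x}$ for every $y\in\Hsp$. Now pick any $y\in\lev_{<f(x)}f$: then $f(y)<\alpha$, i.e.\ $a(y)<\alpha b(y)$, i.e.\ $(a-\alpha b)(y)<0$, and therefore $\ip{g,y-x}\le(a-\alpha b)(y)<0$. As this holds for every $y$ in the slice $\lev_{<f(x)}f$, the definition of $\partial^\star$ gives $g\in\partial^\star f(x)$, which is the claimed inclusion. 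This last step is the routine one; the part that requires care is the bookkeeping in the first two assertions, where one must verify, case by case and sign by sign, that $a-\gamma b$ (resp.\ $a-\alpha b$) is convex or that the relevant sublevel set is empty or the whole interior of the domain, keeping in mind that the sign hypotheses on $a$ hold only on $\mathrm{int}(\dom f)$ — which is precisely why both the quasiconvexity conclusion and the sign of $\alpha=f(x)$ are taken relative to that set.
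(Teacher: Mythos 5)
Your proof is correct; the paper gives no proof of Proposition~\ref{prop:fraction} at all (it is quoted from Kiwiel's Lemmas~3(i) and~4), and your argument is essentially that standard one: since $b>0$, the relation $f(y)\le\gamma$ is equivalent to $(a-\gamma b)(y)\le 0$, the case-by-case sign analysis shows each sublevel set of $f$ on the convex set $\mathrm{int}(\dom(f))$ is convex (hence $f$ is quasiconvex there) and shows $a-\alpha b$ is convex for $\alpha=f(x)$, and the identity $(a-\alpha b)(x)=0$ turns the Fenchel subgradient inequality into $\ip{g,y-x}\le(a-\alpha b)(y)<0$ for every $y\in\lev_{<f(x)}f$, which is exactly membership in $\partial^\star f(x)$. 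Your closing remark is also the right reading of the statement: in cases (ii) and (iii) the sign of $\alpha$, and hence the convexity of $a-\alpha b$, is only controlled for $x\in\mathrm{int}(\dom(f))$, consistent with the scope of Kiwiel's original result even though the proposition is phrased ``for any $x\in\Hsp$.''
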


The following defines a property named the H\"older condition of a functional.
This property is used in turn to describe some of the properties of the quasi-subgradient and to establish the convergence of subgradient methods \cite[Section~2]{yaohua2016}.
\begin{dfn}[H\"older condition~{\cite[Definition~1]{konnov2003}}]
A functional $f:\Hsp\to\real$ is said to satisfy the \emph{H\"older condition} with degree $\beta>0$ at a point $x\in\Hsp$ on a set $M\subset\Hsp$ if there exists a number $L\in\real$ such that
\begin{align*}
\abs{f(z)-f(x)}
\le L\norm{z-x}^\beta\quad(z\in M).
\end{align*}
\end{dfn}
The H\"older condition with degree 1 is equivalent to Lipschitz continuity.
Furthermore, when $f$ is a convex functional, it is also equivalent to the bounded subgradient assumption frequently assumed in convergence analyses of subgradient methods for solving convex optimization problems \cite[Section~2]{yaohua2016}.
For more details on this property, see Example~\ref{example:1} described later.

The following Proposition~\ref{prop:konnov} is a key lemma which relates the distance to the set of minima to its functional value.
While nearly the same assertion in Euclidian spaces is presented in \cite[Proposition~2.1]{konnov2003}, this proposition extends it to Hilbert spaces.
We should remark that the condition for a point $x$ is slightly modified from the original one for the later discussion.
Nevertheless, we can similarly prove this proposition.
\begin{prop}[{\cite[Proposition~2.1]{konnov2003}}]\label{prop:konnov}
Suppose that the functional $f$ satisfies the H\"older condition with degree $\beta>0$ at a point $x^\star\in X^\star$ on the set $\cl(\lev_{<f(x)}f)$ for some point $x\in\Hsp$ such that $f_\star<f(x)$.
Then, we have
\begin{align*}
f(x)-f_\star
\le L\ip{g,x-x^\star}^\beta\quad(g\in\partial^\star f(x)\cap\Sphere).
\end{align*}
\end{prop}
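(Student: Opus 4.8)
The plan is to bound $f(x)-f_\star$ by the distance from $x^\star$ to the super-level set $A:=\lev_{\ge f(x)}f=\{y\in\Hsp:f(y)\ge f(x)\}$, and then to control that distance by $\ip{g,x-x^\star}$ using the fact that $\partial^\star f(x)$ encodes a half-space separating $x$ from the slice $\lev_{<f(x)}f$. Throughout, write $\rho:=\ip{g,x-x^\star}$ and $d(p,C):=\inf_{y\in C}\norm{p-y}$ (such a unit subgradient $g$ exists by Proposition~\ref{M:7}).

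First I would record a few elementary facts. Since $x^\star\in X^\star$ we have $f(x^\star)=f_\star<f(x)$, so $x^\star$ lies in the slice $\lev_{<f(x)}f$, which is open because $f$ is continuous; hence $x^\star\notin A$ while $x\in A$, so $A$ is a nonempty proper closed subset of $\Hsp$ and therefore has nonempty boundary $\bd A$. Next, $g\in\partial^\star f(x)$ gives $\ip{g,y-x}\le 0$ for every $y\in\lev_{<f(x)}f$; contraposing, $\{y\in\Hsp:\ip{g,y-x}>0\}\subset A$, and because $A$ is closed this improves to $A\supset H^+:=\{y\in\Hsp:\ip{g,y-x}\ge 0\}$. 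Taking $y=x^\star$ in the inequality above yields $\rho\ge 0$, and in fact $\rho>0$: if $\rho=0$, then $x^\star$ lies on the bounding hyperplane $\{y\in\Hsp:\ip{g,y-x}=0\}$, yet by continuity some ball about $x^\star$ is contained in $\lev_{<f(x)}f\subset\{y\in\Hsp:\ip{g,y-x}\le 0\}$, which is impossible. This positivity matters, since the claimed inequality is false when $\rho=0$.

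Then I would run the distance estimate. Every $z\in\bd A$ belongs to $A$, so $f(z)\ge f(x)$; and $\bd A=\bd(\lev_{<f(x)}f)\subset\cl(\lev_{<f(x)}f)$, so the H\"older hypothesis at $x^\star$ on $\cl(\lev_{<f(x)}f)$ applies to $z$ and gives $f(z)-f_\star=f(z)-f(x^\star)\le L\norm{z-x^\star}^\beta$. Hence $f(x)-f_\star\le L\norm{z-x^\star}^\beta$ for all $z\in\bd A$, and taking the infimum over $\bd A$ (legitimate since $t\mapsto t^\beta$ is continuous and increasing) gives $f(x)-f_\star\le L\,d(x^\star,\bd A)^\beta$. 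A short segment argument — on the segment from $x^\star$ to any point of $A$, the first point belonging to $A$ lies on $\bd A$ and is no farther from $x^\star$ than the endpoint — shows $d(x^\star,\bd A)=d(x^\star,A)$, using $x^\star\notin A$ and $A$ closed. Finally $d(x^\star,A)\le d(x^\star,H^+)$ by $A\supset H^+$, and since $\norm{g}=1$ and $\ip{g,x^\star-x}=-\rho<0$, the distance from $x^\star$ to the half-space $H^+$ equals $\rho$. Chaining these inequalities yields $f(x)-f_\star\le L\,d(x^\star,\bd A)^\beta=L\,d(x^\star,A)^\beta\le L\rho^\beta=L\ip{g,x-x^\star}^\beta$, as claimed.

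The main obstacle is not a single deep step but aligning the geometry: the set onto which $x^\star$ must be ``projected'' is the boundary of the super-level set, chosen precisely so that ``$f\ge f(x)$'' and the H\"older inequality hold there simultaneously, and one must then observe that this boundary is no farther from $x^\star$ than the separating half-space $H^+$. The supporting topological facts ($\bd A\ne\emptyset$, $\bd A\subset\cl(\lev_{<f(x)}f)$, and $d(x^\star,\bd A)=d(x^\star,A)$) are routine in a Hilbert space but warrant careful statement, and, as noted, $\rho>0$ must be deduced from the continuity of $f$ rather than taken for granted.
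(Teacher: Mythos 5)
Your proposal is correct and takes essentially the same route as the paper's proof: both arguments bound $f(x)-f_\star$ by $L$ times the $\beta$-power of the distance from $x^\star$ to the boundary of the slice $\lev_{<f(x)}f$ (where $f\ge f(x)$ and the H\"older condition hold simultaneously), and then bound that distance by $\ip{g,x-x^\star}$. The only difference is the packaging of the second step: the paper places the open ball of radius $r$ about $x^\star$ inside the slice and tests the subgradient inequality at the points $x^\star+(1-1/k)rg$, whereas you dualize the same geometric fact as the containment of the half-space $\{y\in\Hsp:\ip{g,y-x}\ge 0\}$ in the closed super-level set, combined with the distance-to-half-space formula and the identity $d(x^\star,\bd A)=d(x^\star,A)$.
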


The following propositions are used to prove the theorems presented later.

\begin{prop}[{\cite[Lemma~1]{opial1967}}]\label{prop:opial}
Let $\{x_k\}$ be a sequence in the Hilbert space $\Hsp$ and suppose that it converges weakly to $x$.
Then for any $y\neq x$, $\liminf_{k\to\infty}\norm{x_k-x}<\liminf_{k\to\infty}\norm{x_k-y}$.
\end{prop}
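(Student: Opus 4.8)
The plan is to reduce the statement to the elementary expansion of the squared norm together with the fact that the cross term vanishes under weak convergence. First I would write, for an arbitrary $y\in\Hsp$ and every $k$,
\begin{align*}
\norm{x_k-y}^2=\norm{x_k-x}^2+2\ip{x_k-x,x-y}+\norm{x-y}^2.
\end{align*}
Since $x_k\wto x$, the sequence $\ip{x_k-x,x-y}$ converges to $0$, so the sum of the last two terms on the right-hand side converges to $\norm{x-y}^2$. Using that $\liminf$ is additive whenever one of the two summands is a convergent sequence, I would take $\liminf_{k\to\infty}$ of both sides to obtain
\begin{align*}
\liminf_{k\to\infty}\norm{x_k-y}^2=\liminf_{k\to\infty}\norm{x_k-x}^2+\norm{x-y}^2.
\end{align*}

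Next I would invoke the fact that a weakly convergent sequence is norm-bounded (uniform boundedness principle), so that $\liminf_{k\to\infty}\norm{x_k-x}$ is a finite nonnegative real number and the quantities above are not $+\infty$. Combining this with the hypothesis $y\neq x$, i.e.\ $\norm{x-y}^2>0$, the displayed identity gives the strict inequality $\liminf_{k\to\infty}\norm{x_k-y}^2>\liminf_{k\to\infty}\norm{x_k-x}^2$. Finally, since $t\mapsto t^2$ and $t\mapsto\sqrt{t}$ are continuous and strictly increasing on $[0,\infty)$, one has $\liminf_{k\to\infty}\norm{x_k-x}^2=(\liminf_{k\to\infty}\norm{x_k-x})^2$ and likewise for $y$; taking square roots therefore preserves the strict inequality and yields the claim.

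There is no real obstacle here: the only points needing a brief justification are the additivity of $\liminf$ under a convergent summand and the interchange of $\liminf$ with the square and square-root, both of which are routine consequences of monotonicity on the nonnegative reals. The one substantive ingredient that must be flagged is the boundedness of $\{x_k\}$, which guarantees the relevant limits inferior are finite so that the strict inequality is non-vacuous.
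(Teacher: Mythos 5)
Your proof is correct, and it is essentially the classical argument for Opial's lemma: expand $\norm{x_k-y}^2=\norm{x_k-x}^2+2\ip{x_k-x,x-y}+\norm{x-y}^2$, let the cross term vanish by weak convergence, and use boundedness of $\{x_k\}$ to keep the limits inferior finite so that $\norm{x-y}^2>0$ yields the strict inequality. The paper itself does not reprove this statement but cites \cite[Lemma~1]{opial1967}, whose proof proceeds along the same lines, so there is nothing to add.
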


\begin{prop}[{\cite[Proposition~10.25]{bc}}]\label{prop:weaklycontinuous}
Every quasiconvex continuous functional on a real Hilbert space $\Hsp$ has weakly lower semicontinuity.
That is to say, we have $f(x)\le\liminf_{n\to\infty}f(x_n)$ for any sequence $\{x_n\}\subset\Hsp$ which converges weakly to a point $x\in\Hsp$ if the functional $f$ is a quasiconvex continuous functional on a real Hilbert space $\Hsp$.
\end{prop}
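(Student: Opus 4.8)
The plan is to derive weak lower semicontinuity from the geometry of the sublevel sets of $f$, using the classical fact that a norm-closed convex subset of a Hilbert space is weakly (sequentially) closed.

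First I would fix $\alpha\in\real$ and examine the sublevel set $S_\alpha:=\{x\in\Hsp:f(x)\le\alpha\}$. Since $f$ is continuous on $\Hsp$, each $S_\alpha$ is closed in the norm topology. Since $f$ is quasiconvex, each $S_\alpha$ is convex: whenever $f(x)\le\alpha$ and $f(y)\le\alpha$, the quasiconvexity inequality gives $f((1-t)x+ty)\le\max\{f(x),f(y)\}\le\alpha$ for every $t\in[0,1]$. Thus every $S_\alpha$ is closed and convex, and by the Hahn--Banach separation theorem (equivalently, Mazur's lemma) it is therefore weakly closed, hence weakly sequentially closed. This weak closedness of closed convex sets is the one genuinely nontrivial ingredient; the rest is bookkeeping.

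It then remains to assemble the inequality. Let $\{x_n\}\subset\Hsp$ satisfy $x_n\wto x$ and put $\alpha:=\liminf_{n\to\infty}f(x_n)$; if $\alpha=+\infty$ there is nothing to prove, so assume $\alpha<\infty$. Choose a subsequence $\{x_{n_k}\}$ with $f(x_{n_k})\to\alpha$, noting that $x_{n_k}\wto x$ still holds. Fix $\varepsilon>0$; then $x_{n_k}\in S_{\alpha+\varepsilon}$ for all sufficiently large $k$, and since $S_{\alpha+\varepsilon}$ is weakly sequentially closed and $x_{n_k}\wto x$, we obtain $x\in S_{\alpha+\varepsilon}$, i.e. $f(x)\le\alpha+\varepsilon$. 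Letting $\varepsilon\downarrow 0$ yields $f(x)\le\alpha=\liminf_{n\to\infty}f(x_n)$, which is the claimed weak lower semicontinuity. (If one prefers to avoid the weak topology explicitly, one can instead invoke Mazur's lemma directly: a tail of $\{x_{n_k}\}$ has $x$ in the norm closure of its convex hull, so finite convex combinations $y_j$ of the $x_{n_k}$ converge in norm to $x$; iterating the quasiconvexity inequality gives $f(y_j)\le\alpha+\varepsilon$, and continuity of $f$ then gives $f(x)=\lim_j f(y_j)\le\alpha+\varepsilon$.)
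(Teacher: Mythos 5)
Your argument is correct and is exactly the standard route used by the cited reference [Bauschke--Combettes, Proposition~10.25], which the paper invokes without reproducing a proof: quasiconvexity plus continuity make every sublevel set closed and convex, hence weakly sequentially closed, and the $\liminf$ bookkeeping follows. (Only a cosmetic remark: since $f$ is real-valued, the case $\liminf_{n}f(x_n)=-\infty$ is ruled out by the same sublevel-set argument, so your reduction to $\alpha<\infty$ loses nothing.)
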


\begin{prop}[{\cite[Proposition~11.8]{bc}}]\label{prop:uniqueness}
Let $C$ be a convex subset of $\Hsp$, and let $f$ be a strictly quasiconvex functional on the Hilbert space $\Hsp$, i.e., $f(\alpha x+(1-\alpha)y)<\max\{f(x),f(y)\}$ for any $\alpha\in(0,1)$ and for any two distinct points $x,y\in\Hsp$.
Then, $f$ has at most one minimizer over $C$.
\end{prop}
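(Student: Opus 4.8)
The plan is to argue by contradiction, using the convexity of $C$ to keep a suitable convex combination feasible and then invoking the strict quasiconvexity of $f$ to contradict minimality. Suppose, contrary to the claim, that $f$ admits two \emph{distinct} minimizers $x_1,x_2\in C$ with $x_1\neq x_2$. Set $m:=\inf_{x\in C}f(x)$; by the assumption that $x_1$ and $x_2$ are minimizers over $C$, the value $m$ is finite and attained, and $f(x_1)=f(x_2)=m$. In particular $\max\{f(x_1),f(x_2)\}=m$.

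Next I would form the midpoint $z:=\tfrac12 x_1+\tfrac12 x_2$. Since $x_1,x_2\in C$ and $C$ is convex, we have $z\in C$, hence $f(z)\ge m$ by the definition of $m$. On the other hand, applying the strict quasiconvexity inequality with $\alpha=\tfrac12$ to the two distinct points $x_1,x_2$ yields $f(z)=f\!\left(\tfrac12 x_1+\tfrac12 x_2\right)<\max\{f(x_1),f(x_2)\}=m$, which contradicts $f(z)\ge m$. Consequently no two distinct minimizers over $C$ can exist, i.e., $f$ has at most one minimizer over $C$.

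There is essentially no serious obstacle in this argument; it is a one-line contradiction once the bookkeeping is set up. The only two points that require a moment of care are: first, that both candidate minimizers necessarily share the same attained value $m$, so that the maximum on the right-hand side of the strict quasiconvexity inequality collapses to exactly $m$; and second, that the convexity of $C$ is precisely what is needed to guarantee the midpoint $z$ remains feasible, so that $f(z)\ge m$ may legitimately be asserted. Any fixed $\alpha\in(0,1)$ works equally well in place of $\tfrac12$.
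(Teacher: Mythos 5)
Your argument is correct: the paper itself gives no proof, citing \cite[Proposition~11.8]{bc}, and your midpoint-contradiction argument is exactly the standard proof behind that citation (two distinct minimizers share the attained value $m$, convexity of $C$ keeps the midpoint feasible, and strict quasiconvexity forces $f(z)<m$, a contradiction). Nothing further is needed.
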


\section{Quasiconvex Subgradient Method over a Fixed Point Set}\label{sec:convergence}
We propose the following Algorithm~\ref{alg:main} for solving Problem~\ref{prob:main} with Assumption~\ref{assum:basic}.
\begin{algorithm}
  \caption{Fixed Point Quasiconvex Subgradient Method for Solving Problem~\ref{prob:main}}\label{alg:main}
  \begin{algorithmic}[1]
    \Require
      \Statex{$f$: $\Hsp\to\real$, $T$: $\Hsp\to \Hsp$, $D\subset\Hsp$;}
      \Statex{$\{v_k\}\subset(0,\infty)$, $\{\alpha_k\}\subset(0,1]$.}
    \Ensure
      \Statex{$\{x_k\}\subset D$.}
    \State{$x_1\in D$.}
    \For{$k=1,2,\ldots$}
      \State{$g_k\in\partial^\star f(x_k)\cap\Sphere$.}\label{alg:main:1}
      \State{$x_{k+1}:=P_D(\alpha_kx_k+(1-\alpha_k)T(x_k-v_kg_k))$.}\label{alg:main:improve}
    \EndFor
  \end{algorithmic}
\end{algorithm}
This algorithm iteratively generates the next point $x_{k+1}$ from the current approximation $x_k$ in order to improve it.
Specifically, step~\ref{alg:main:1} of this algorithm finds a regularized subgradient of the functional $f$ at the current approximation $x_k$.
Step~\ref{alg:main:improve} is composed of two improving iterators: one is the subgradient method iterator $x_k-v_kg_k$ to improve approximations with respect to the functional value, and the other is the Krasnosel'ski\u\i-Mann iterator \cite{krasnoselskii1955,mann1953} $\alpha_k\Id+(1-\alpha_k)T$ to improve approximations with respect to the distance to the fixed point set $\Fix(T)$.
\label{M:9}To ensure that the generated sequence is contained in the set $D$, we project each generated point onto this set (this operation is optional because the metric projection operator $P_D$ coincides with the identity mapping $\Id$ if the set $D$ is the whole space $\Hsp$).
By repeating steps~\ref{alg:main:1}--\ref{alg:main:improve}, this algorithm generates a sequence converging to a point in the solution set $X^\star$.

Before moving on to the convergence analyses, we will give the assumptions and lemmas describing the fundamental properties of Algorithm~\ref{alg:main}.
\begin{assum}\label{assum:0}
\begin{enumerate}[({A}1)]
\setcounter{enumi}{\value{assumptions}}
\item\label{assum:holder}
For any $k\in\nat$ such that $f_\star<f(x_k)$ and for all $x^\star\in X^\star$, the functional $f$ satisfies the H\"older condition with degree $\beta>0$ at the point $x^\star$ on the set $\cl(\lev_{<f(x_k)}f)$.
\item\label{assum:bounded}
The generated sequence $\{x_k\}$ is bounded.
\item\label{assum:alpha}
The real sequence $\{\alpha_k\}\subset(0,1]$ satisfies $0<\liminf_{k\to\infty}\alpha_k\le\limsup_{k\to\infty}\alpha_k<1$.
\setcounter{assumptions}{\value{enumi}}
\end{enumerate}
\end{assum}
\label{M:10}In the following, we have to ensure that Assumption (A\ref{assum:bounded}), i.e., the boundedness of the sequence generated by Algorithm~\ref{alg:main}, holds.
If we know the estimated range of the solution candidates, the simplest way to bound the generated sequence is to let the set $D$ be a ball with a large enough diameter.
We can compute the metric projection onto a closed ball easily \cite[Example~3.18]{bc}.
For example, giving $10^{16}\Ball$ as the set $D$ to Algorithm 1 satisfies Assumption~(A\ref{assum:bounded}).
\label{M2:3}\rev{Even when the boundedness of the set $D$ cannot be guaranteed, we can ensure the boundedness of the generated sequence if the objective functional is coercive.
A detailed discussion and proof of this fact is given in {\ifextended\ref{appendix:loft}\else Appendix~A\fi}.}

Here let us give some examples which satisfy Assumption~\ref{assum:0}.
The first example shows the applicability of Algorithm~\ref{alg:main} to constrained convex optimization problems.
\begin{example}[{\cite[Remark~4.34.(iii), Propositions~4.47 and 16.20]{bc}}]\label{example:1}
Suppose that $f$ is a continuous, convex functional on $\Hsp$, $\tilde{T}$ is a nonexpansive mapping of $\Hsp$ into itself, and $D$ is a closed, bounded, convex subset of $\Hsp$.
Set $T:=(\Id+\tilde{T})/2$.
Furthermore, assume that the feasible set $\Fix(T)\cap D$ is nonempty.
Set $\alpha_k:=1/2$ for every $k\in\nat$.
If $\Hsp$ is finite-dimensional, or if the usual Fenchel subdifferential of $f$ maps every bounded subset of H to a bounded set, then $T$ is a firmly nonexpansive mapping, $\Fix(T)$ coincides with the intersection $\Fix(T)$, and Assumption~\ref{assum:0} holds.
\end{example}
This example shows that, if the set $D$ is bounded, our algorithm can be applied to nonsmooth convex optimization problems over fixed point sets of nonexpansive mappings \cite{iiduka2015,iiduka2016,iiduka2016oms,iiduka2016ejor}.
\label{M:11}The following discussion is predicated upon Assumption~(A\ref{assum:bounded}), i.e., that the generated sequence is bounded, since it is required for evaluating the distance between the generated sequence and the fixed point set (Lemma~\ref{lem:dist}).
Indeed, the existing analysis of the fixed point subgradient method for convex optimization assumes the generated sequence is bounded \cite[Assumption~(A2)]{Iiduka2015b}, \cite[Assumption~3.1]{iiduka2015}.
However, we can guarantee the boundedness. As we described above, the simplest $D$ that satisfies the requirements is a ball with a large enough diameter.

\label{M:1}The transformation $\tilde{T}\mapsto(\Id+\tilde{T})/2$ converts the given nonexpansive mapping $\tilde{T}$ into a corresponding firmly nonexpansive mapping $T$ whose fixed point set coincides with the given one \cite[Remark~4.37 and Proposition~4.47]{bc}.
This implies that any nonexpansive mapping whose fixed point set coincides with the constraint set can be used as $\tilde{T}$.
Of course, since any metric projection operator is a firmly nonexpansive mapping \cite[Proposition~4.16]{bc}, we can solve an optimization problem over the constraint set onto which the metric projection can be computed.
In Section~\ref{sec:experiment}, we will describe a concrete example of constructing a firmly nonexpansive mapping.
Furthermore, our algorithm extends the existing subgradient methods for convex optimization, since any of the usual Fenchel subgradients of a convex functional is also a subgradient as defined in this paper.
Hence, the convergence analyses of our algorithm (described later) will be very useful for not only quasiconvex optimization \cite{yaohua2015,yaohua2016,kiwiel2001,konnov2003} but also convex optimization \cite{iiduka2015,iiduka2016,iiduka2016oms,iiduka2016ejor}.

\label{M:12}Let us consider the simplest example of a (nonconvex) quasiconvex objective functional.
The next example shows that a typical quasiconvex functional, called the capped-$l_1$ norm, appearing in sparse regularization of machine learning tasks \cite[Equation~(25)]{cheung2017}, \cite[Appendix~C.3.1]{zhang2010} can be minimized using Algorithm~\ref{alg:main}.
\begin{example}\label{example:2}
Let $f(x):=\min\{\norm{x},\alpha\}$ for some $\alpha>0$, and let $T:=\Id$.
Set $\{v_k\}\subset(0,\alpha]$ and $\alpha_k:=1/2$ for all $k\in\nat$.
Use $g_k:=x_k/\norm{x_k}\in\partial^\star f(x_k)\cap\Sphere$ for each $k\in\nat$ until $x_k$ reaches the solution.
Then, this setting satisfies Assumption~\ref{assum:0}.
\end{example}
Here we should remark that Assumption~\ref{assum:0} does not guarantee that the sequence generated by Algorithm~\ref{alg:main} converges to some optimum.
For example, let us consider the case where $f(x):=\min\{\abs{x},1\}$ for any real $x\in\real$ and the initial point $x_1:=3/2$.
Even though it violates the assumption of Example~\ref{example:2}, let us assume $v_k:=2$ for all $k\in\nat$.
Then, this setting still satisfies Assumption~\ref{assum:0}.
However, as illustrated in Figure~\ref{fig:zigzag}, we can see that the generated sequence does not converge to an optimum.
\begin{figure}[htbp]
  \centering
  \includegraphics[width=4truecm]{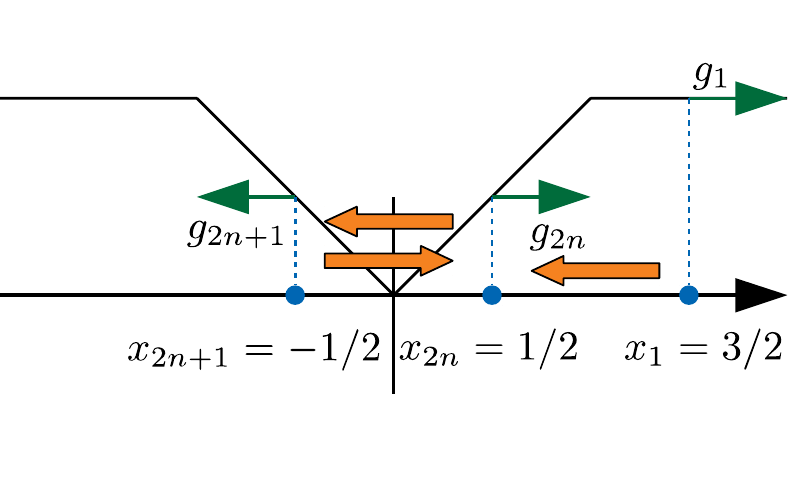}
  \caption{Illustration of case where the generated sequence repeats $1/2$ and $-1/2$ and does not converge to the origin.}\label{fig:zigzag}
\end{figure}
In each step, the approximation is moved in the direction of the origin by $1$.
In this counterexample, the first step moves the initial point $3/2$ to the point $1/2$.
After that, the algorithm eternally iterates so as to move approximations to their symmetric point with respect to the origin.
Therefore, the generated sequence repeats $1/2$ and $-1/2$ and does not converge to the optimum $0$. The convergence theorems presented later describes what is required to make the generated sequence converge to the optimum and/or how much error can occur in the solution.

Finally, we present a concrete application that can be dealt with as a quasiconvex optimization problem.
The following fractional programming problem is called the Cobb-Douglas production efficiency problem and satisfies Assumption~\ref{assum:0}.
\begin{example}[{\cite[Problem~(3.13)]{bradley1974}, \cite[Problem~(6.1))]{yaohua2015}, \cite[Section~1.7]{stancu1997}}]\label{example:3}
Let us consider the problem in Euclidean space; i.e., suppose that $\Hsp:=\real^n$.
Set $D:=[0,M]^n$ for some $M>0$, and set $\alpha_k:=1/2$ for any $k\in\nat$.
We give two positive scalars $a_0,c_0>0$ and two $n$-dimensional positive vectors in advance, $a,c\in(0,\infty)^n$ such that $\sum_{i=1}^na_i=1$.
Let
\begin{align*}
f(x):=\begin{cases}
\frac{-a_0\prod_{j=1}^nx_j^{a_j}}{\ip{c,x}+c_0}&(x\in[0,\infty)^n),\\
0&(\text{\rm otherwise}),
\end{cases}
\end{align*}
and assume that $T$ is a nonexpansive mapping from $\real^n$ to itself and $\Fix(T)\cap D\neq\emptyset$ holds.
Run Algorithm~\ref{alg:main} with an initial point $x_1\in[0,M]^n$.
Then, Assumption~\ref{assum:0} holds.
\end{example}
In Section~\ref{sec:experiment}, we will define the Cobb-Douglas production efficiency problem and show the numerical behavior of Algorithm~\ref{alg:main} when it solves a concrete instance of this problem.
Hence, we will put off explaining the problem in detail till later and limit ourselves here to a brief description.

The Cobb-Douglas production efficiency problem was introduced by Bradley and Frey \cite{bradley1974}.
Hu, Yang, and Sim proposed an algorithm for solving this problem by regarding it as a constrained quasiconvex optimization problem \cite{yaohua2015}.
The fact that the objective functional $f$ is quasiconvex allows us to treat it as a quasiconvex optimization problem.
The study by Hu et al.~\cite{yaohua2015} is also based on this fact; it ensures the quasiconvexity of the objective functional.
However, the existing results including that of \cite{yaohua2015} assume that the projection onto the constraint set is easily computable.

Let us consider the meaning of this objective function.
The numerator expresses the total profit defined by the production factors $x_j$ for each $j=1,2,\ldots,n$.
This numerator is modeled with the Cobb-Douglas production function.
In this problem, we consider the total cost for the production activities to be an affine function with respect to the production factors $\{x_j\}_{j=1}^n$.
This cost function is set as the denominator of the objective function.
Hence, the objective function $f$ represents the ratio of the total profit and the total cost.
In addition, it is known that the numerator, i.e., the Cobb-Douglas production function, is convex \cite[Section~2]{lee2013}, and therefore, Proposition~\ref{prop:fraction} guarantees that $f$ is quasiconvex.

The sequence generated by Algorithm~\ref{alg:main} must be contained in the box $[0,M]^n$, because its complement includes points that make the denominator of the function $f$ zero.
Therefore, we set the domain to $D=[0,M]^n\subset[0,\infty)^n$.
However, from the definition of $f$, setting $f(x):=0$ when $x$ is out of the set $[0,\infty)^n$ makes it possible to expand its domain to the whole space while maintaining continuity.

The following lemmas show the fundamental properties of Algorithm~\ref{alg:main}.
\begin{lem}\label{lem:fun}
Let $\{x_k\}\subset\Hsp$ be a sequence generated by Algorithm~\ref{alg:main}.
Suppose that Assumptions~\ref{assum:basic} and (A\ref{assum:holder}) hold.
Then, for any $k\in\nat$ that satisfies $f_\star<f(x_k)$, the following inequality holds.
\begin{align*}
\norm{x_{k+1}-x^\star}^2
\le\norm{x_k-x^\star}^2-2v_k(1-\alpha_k)\left(\frac{f(x_k)-f_\star}{L}\right)^\frac{1}{\beta}+(1-\alpha_k)v_k^2.
\end{align*}
\end{lem}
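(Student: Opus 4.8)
The plan is to control the squared distance $\norm{x_{k+1}-x^\star}^2$ to a fixed minimizer $x^\star\in X^\star$ by peeling off, one at a time, the three operations that define $x_{k+1}$ in step~\ref{alg:main:improve}: the outer metric projection $P_D$, the Krasnosel'ski\u\i--Mann averaging $\alpha_k\Id+(1-\alpha_k)T$, and the inner subgradient step $x_k\mapsto x_k-v_kg_k$. The Hölder inequality of Proposition~\ref{prop:konnov} is then used at the very end to turn the inner-product term into the functional-value term.

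First I would use that $x^\star\in X\cap D\subset D$, so $P_D(x^\star)=x^\star$; since $P_D$ is firmly (hence) nonexpansive, $\norm{x_{k+1}-x^\star}\le\norm{\alpha_kx_k+(1-\alpha_k)T(x_k-v_kg_k)-x^\star}$. Writing $y_k:=x_k-v_kg_k$ and invoking convexity of $\norm{\cdot}^2$ together with $\alpha_k+(1-\alpha_k)=1$ gives
\[
\norm{x_{k+1}-x^\star}^2\le\alpha_k\norm{x_k-x^\star}^2+(1-\alpha_k)\norm{T(y_k)-x^\star}^2.
\]
By Assumption~(A\ref{a2:fixedpoint}) we have $x^\star\in X=\Fix(T)$, so $T(x^\star)=x^\star$, and nonexpansivity of $T$ yields $\norm{T(y_k)-x^\star}\le\norm{y_k-x^\star}$. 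Expanding the subgradient step and using $\norm{g_k}=1$ gives $\norm{y_k-x^\star}^2=\norm{x_k-x^\star}^2-2v_k\ip{g_k,x_k-x^\star}+v_k^2$, and substituting back and combining the $\norm{x_k-x^\star}^2$ terms yields
\[
\norm{x_{k+1}-x^\star}^2\le\norm{x_k-x^\star}^2-2v_k(1-\alpha_k)\ip{g_k,x_k-x^\star}+(1-\alpha_k)v_k^2.
\]

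Finally I would apply Proposition~\ref{prop:konnov} at the index $k$ in question: its hypotheses hold because $f_\star<f(x_k)$ by assumption and Assumption~(A\ref{assum:holder}) supplies the Hölder condition with degree $\beta$ at $x^\star$ on $\cl(\lev_{<f(x_k)}f)$. Hence $f(x_k)-f_\star\le L\ip{g_k,x_k-x^\star}^\beta$. Observe also that $x^\star\in\lev_{<f(x_k)}f$ (since $f(x^\star)=f_\star<f(x_k)$) and $g_k\in\partial^\star f(x_k)$ force $\ip{g_k,x^\star-x_k}\le0$, i.e. $\ip{g_k,x_k-x^\star}\ge0$; together with $f(x_k)-f_\star>0$ this makes $L>0$ and the $\beta$-th root legitimate, so $\ip{g_k,x_k-x^\star}\ge\bigl((f(x_k)-f_\star)/L\bigr)^{1/\beta}$. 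Since $v_k>0$ and $1-\alpha_k\ge0$, replacing $\ip{g_k,x_k-x^\star}$ by this lower bound in the last display gives exactly the claimed inequality.

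\textbf{Main obstacle.}
There is no deep difficulty here; the only genuine care is the sign bookkeeping — verifying $\ip{g_k,x_k-x^\star}\ge0$ (so that Proposition~\ref{prop:konnov} applies and taking a $\beta$-th root is meaningful, and $L>0$), and making sure the convexity-of-$\norm{\cdot}^2$ inequality and the nonexpansivity estimates are applied in the correct direction. Everything else is a routine unwinding of the iteration.
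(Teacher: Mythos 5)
Your proposal is correct and follows essentially the same route as the paper's proof: drop $P_D$ via nonexpansivity and $P_D(x^\star)=x^\star$, apply convexity of $\norm{\cdot}^2$ to the Krasnosel'ski\u\i--Mann average, use nonexpansivity of $T$ with $T(x^\star)=x^\star$, expand the subgradient step with $\norm{g_k}=1$, and finish with Proposition~\ref{prop:konnov}. Your extra check that $\ip{g_k,x_k-x^\star}\ge0$ (justifying the $\beta$-th root) is a harmless refinement the paper leaves implicit.
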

\label{M:4}
  Similar lemmas are presented in the literature that discuss algorithms which use the metric projection onto the constraint set \cite[Lemma~3.1]{yaohua2016b}, \cite[Lemma~3.2]{yaohua2016}, \cite[Lemma~6]{kiwiel2001}.
  This implies that the generated sequence is guaranteed to be contained in the constraint set.
  Therefore, the proofs of these lemmas use a property which ensures $x_k\in X\cap D$ and $f_\star\le f(x_k)$.
  However, the algorithm presented here generates a sequence that may not be in the fixed point set of $T$, in other words, it may be out of the feasible set.
  The convergence analyses are carefully divided into cases where $f_\star<f(x_k)$ holds and cases where it does not hold.
  The above lemma describes that the existing results  hold for the proposed algorithm only if the positive case $f_\star<f(x_k)$ holds and even if the containedness of the generated sequence in the fixed point set cannot be guaranteed.

\begin{proof}\label{proof:lem:fun}
  See {\ifextended\ref{appendix:lemmasproofs}\else Appendix~C\fi} for the proof of this lemma.
\end{proof}

\begin{lem}\label{lem:dist}
Let $\{x_k\}\subset\Hsp$ be a sequence generated by Algorithm~\ref{alg:main}.
Suppose that Assumptions~\ref{assum:basic} and (A\ref{assum:bounded}) hold and the real sequence $\{v_k\}$ is bounded.
Then, for each $x\in\Fix(T)\cap D$, there exists $M_1\ge 0$ such that
\begin{align*}
\norm{x_{k+1}-x}^2
\le\norm{x_k-x}^2-(1-\alpha_k)\norm{x_k-T(x_k-v_kg_k)}^2+v_kM_1\quad(k\in\nat).
\end{align*}
\end{lem}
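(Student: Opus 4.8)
The plan is to expand $\norm{x_{k+1}-x}^2$ using the definition of $x_{k+1}$ in step~\ref{alg:main:improve} of Algorithm~\ref{alg:main}, then exploit the firm nonexpansivity of $T$ together with $x\in\Fix(T)\cap D$ to peel off the term $-(1-\alpha_k)\norm{x_k-T(x_k-v_kg_k)}^2$, and finally bound the remaining cross terms by $v_kM_1$ using Assumption~(A\ref{assum:bounded}) and the boundedness of $\{v_k\}$. First I would note that, since $x\in D$ and $P_D$ is nonexpansive with $P_D(x)=x$, we have $\norm{x_{k+1}-x}\le\norm{\alpha_kx_k+(1-\alpha_k)T(x_k-v_kg_k)-x}$, so it suffices to bound the right-hand side. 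Writing $y_k:=x_k-v_kg_k$ and using convexity of $\norm{\cdot}^2$ together with the identity $\norm{\alpha a+(1-\alpha)b}^2=\alpha\norm{a}^2+(1-\alpha)\norm{b}^2-\alpha(1-\alpha)\norm{a-b}^2$, I would get
\begin{align*}
\norm{\alpha_kx_k+(1-\alpha_k)T(y_k)-x}^2
&=\alpha_k\norm{x_k-x}^2+(1-\alpha_k)\norm{T(y_k)-x}^2\\
&\quad-\alpha_k(1-\alpha_k)\norm{x_k-T(y_k)}^2.
\end{align*}

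Next I would control $\norm{T(y_k)-x}^2$. Since $T$ is firmly nonexpansive and $x=T(x)$, we have $\norm{T(y_k)-x}^2\le\norm{y_k-x}^2-\norm{(\Id-T)y_k-(\Id-T)x}^2=\norm{y_k-x}^2-\norm{y_k-T(y_k)}^2$. Expanding $\norm{y_k-x}^2=\norm{x_k-v_kg_k-x}^2=\norm{x_k-x}^2-2v_k\ip{g_k,x_k-x}+v_k^2\norm{g_k}^2$ and recalling $\norm{g_k}=1$, and also expanding $\norm{y_k-T(y_k)}^2=\norm{x_k-T(y_k)}^2-2v_k\ip{g_k,x_k-T(y_k)}+v_k^2$, the two $v_k^2$ terms cancel, leaving
\begin{align*}
\norm{T(y_k)-x}^2
\le\norm{x_k-x}^2-\norm{x_k-T(y_k)}^2-2v_k\ip{g_k,x-T(y_k)}.
\end{align*}
Substituting this into the previous display, the $\norm{x_k-x}^2$ terms recombine to $\norm{x_k-x}^2$, and the coefficient of $\norm{x_k-T(y_k)}^2$ becomes $-(1-\alpha_k)-\alpha_k(1-\alpha_k)=-(1-\alpha_k)(1+\alpha_k)$, which is $\le-(1-\alpha_k)$; discarding the extra nonnegative multiple $-\alpha_k(1-\alpha_k)\norm{x_k-T(y_k)}^2$ gives the stated $-(1-\alpha_k)\norm{x_k-T(y_k)}^2$ term.

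It remains to absorb the residual term $-2v_k(1-\alpha_k)\ip{g_k,x-T(y_k)}$ into $v_kM_1$. Here I would use $\abs{\ip{g_k,x-T(y_k)}}\le\norm{g_k}\norm{x-T(y_k)}=\norm{x-T(y_k)}$ and bound $\norm{x-T(y_k)}$ by a constant: since $\{x_k\}$ is bounded (Assumption~(A\ref{assum:bounded})) and $\{v_k\}$ is bounded, $\{y_k\}$ is bounded, hence so is $\{T(y_k)\}$ by nonexpansivity of $T$ (with $\norm{T(y_k)-x}\le\norm{y_k-x}$), and therefore $\sup_{k}\norm{x-T(y_k)}=:M_1/2<\infty$; since $2(1-\alpha_k)\le 2$, the residual is bounded below by $-v_kM_1$. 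Collecting everything yields the claimed inequality. The main obstacle is purely bookkeeping: keeping the several $v_k^2$ and cross terms straight through the firm-nonexpansivity expansion so that the cancellations land exactly on the coefficients $(1-\alpha_k)$ and leave a clean $O(v_k)$ remainder; no genuinely hard estimate is involved once one commits to the firm-nonexpansivity identity for $T$ and the convexity-of-square identity for the Krasnosel'skii–Mann step.
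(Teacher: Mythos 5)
Your proof is correct and follows essentially the same route as the paper's: nonexpansivity of $P_D$ with $P_D(x)=x$, convexity of $\norm{\cdot}^2$ for the Krasnosel'ski\u\i--Mann combination, firm nonexpansivity of $T$ at the fixed point $x$, and a Cauchy--Schwarz bound on the $O(v_k)$ cross term using the boundedness of $\{x_k\}$ and $\{v_k\}$ to define $M_1$. The only blemish is a harmless sign slip in your intermediate display (the cross term should read $+2v_k\ip{g_k,x-T(x_k-v_kg_k)}$, and correspondingly the residual must be bounded \emph{above} by $v_kM_1$), which your absolute-value estimate $\abs{\ip{g_k,x-T(x_k-v_kg_k)}}\le\norm{x-T(x_k-v_kg_k)}\le M_1/2$ absorbs regardless of the sign.
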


\begin{proof}\label{proof:lem:dist}
  See {\ifextended\ref{appendix:lemmasproofs}\else Appendix~C\fi} for the proof of this lemma.
\end{proof}

\subsection{Constant step-size rule}
The following theorem shows how precise the generated solution is when the constant step-size rule is used.
\begin{thm}\label{thm:constant}
Let $v>0$ and $v_k:=v$ for all $k\in\nat$ and $\{x_k\}\subset\Hsp$ be a sequence generated by Algorithm~\ref{alg:main}.
Suppose that Assumptions~\ref{assum:basic} and \ref{assum:0} hold.
Then, the sequence $\{x_k\}$ satisfies
\begin{align*}
\liminf_{k\to\infty}f(x_k)\le f_\star+L\left(\frac{v}{2}\right)^\beta
\text{, and }\liminf_{k\to\infty}\norm{x_k-T(x_k)}^2\le Mv
\end{align*}
for some $M\ge 0$.
\end{thm}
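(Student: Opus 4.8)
The plan is to establish both estimates by contradiction, using Lemma~\ref{lem:fun} for the first and Lemma~\ref{lem:dist} for the second; in each case the negation of the assertion, combined with Assumption~(A\ref{assum:alpha}), will be shown to force the squared distance $\norm{x_k-x^\star}^2$ to a fixed point $x^\star$ to decrease by a fixed positive amount at every sufficiently large index, which is impossible because this quantity is nonnegative. To set up, note that $X^\star\neq\emptyset$ by (A\ref{a3:nonempty}) and $X^\star\subset X\cap D=\Fix(T)\cap D$, so a single point $x^\star\in X^\star$ is a legitimate reference point for both lemmas; and by (A\ref{assum:alpha}) there are $\bar\alpha\in(0,1)$ and $k_1\in\nat$ with $1-\alpha_k\ge 1-\bar\alpha>0$ for all $k\ge k_1$.

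First I would prove $\liminf_{k\to\infty}f(x_k)\le f_\star+L(v/2)^\beta$. Suppose not; then there are $\varepsilon>0$ and $k_0\ge k_1$ with $f(x_k)\ge f_\star+L(v/2)^\beta+\varepsilon>f_\star$ for all $k\ge k_0$, so Lemma~\ref{lem:fun} applies for each such $k$. Since $t\mapsto t^{1/\beta}$ is strictly increasing on $[0,\infty)$, this lower bound on $f(x_k)-f_\star$ gives $((f(x_k)-f_\star)/L)^{1/\beta}\ge v/2+\delta$ with $\delta:=((v/2)^\beta+\varepsilon/L)^{1/\beta}-v/2>0$. Substituting $v_k=v$ and this inequality into Lemma~\ref{lem:fun}, the $v^2$-terms cancel, and for all $k\ge k_0$,
\begin{align*}
\norm{x_{k+1}-x^\star}^2\le\norm{x_k-x^\star}^2-2v\delta(1-\alpha_k)\le\norm{x_k-x^\star}^2-2v\delta(1-\bar\alpha).
\end{align*}
Iterating this from $k_0$ forces $\norm{x_k-x^\star}^2\to-\infty$, contradicting $\norm{x_k-x^\star}^2\ge 0$.

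Next I would prove the residual bound. Apply Lemma~\ref{lem:dist} with $x:=x^\star$ to obtain some $M_1\ge 0$ with $\norm{x_{k+1}-x^\star}^2\le\norm{x_k-x^\star}^2-(1-\alpha_k)\norm{x_k-T(x_k-v_kg_k)}^2+vM_1$ for every $k$. To replace the displacement $\norm{x_k-T(x_k-v_kg_k)}$ by the residual $\norm{x_k-T(x_k)}$, use that $T$ is nonexpansive and $\norm{g_k}=1$ (step~\ref{alg:main:1}): then $\norm{T(x_k-v_kg_k)-T(x_k)}\le v$, so by the triangle inequality and $(a+b)^2\le 2a^2+2b^2$, $\norm{x_k-T(x_k-v_kg_k)}^2\ge\frac{1}{2}\norm{x_k-T(x_k)}^2-v^2$. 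Inserting this and bounding $(1-\alpha_k)v^2\le v^2$ yields
\begin{align*}
\norm{x_{k+1}-x^\star}^2\le\norm{x_k-x^\star}^2-\frac{1-\alpha_k}{2}\norm{x_k-T(x_k)}^2+v(v+M_1)\quad(k\in\nat).
\end{align*}
Put $M:=2(v+M_1)/(1-\bar\alpha)$. If $\liminf_{k\to\infty}\norm{x_k-T(x_k)}^2>Mv$, then for some $\varepsilon>0$ and $k_0\ge k_1$ we have $\norm{x_k-T(x_k)}^2\ge Mv+\varepsilon$ for all $k\ge k_0$; for those $k$, since $(1-\bar\alpha)Mv/2=v(v+M_1)$ the right-hand side above is at most $\norm{x_k-x^\star}^2-(1-\bar\alpha)\varepsilon/2$, and iterating from $k_0$ again forces $\norm{x_k-x^\star}^2\to-\infty$, a contradiction. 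Hence $\liminf_{k\to\infty}\norm{x_k-T(x_k)}^2\le Mv$.

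I expect the only mildly delicate point to be the passage, in the second part, from the displacement $\norm{x_k-T(x_k-v_kg_k)}$ that genuinely appears in Lemma~\ref{lem:dist} to the residual $\norm{x_k-T(x_k)}$ appearing in the statement; as indicated this is handled by nonexpansivity of $T$ and the normalization $\norm{g_k}=1$ supplied by step~\ref{alg:main:1}. The rest is the routine device that a sequence of nonnegative reals cannot decrease by a fixed positive amount at infinitely many consecutive indices, so I anticipate no real obstacle.
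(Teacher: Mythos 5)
Your proof is correct and follows essentially the same route as the paper's: both bounds are obtained by contradiction from Lemma~\ref{lem:fun} and Lemma~\ref{lem:dist}, respectively, with Assumption~(A\ref{assum:alpha}) supplying a uniform positive lower bound on $1-\alpha_k$ so that the squared distance to $x^\star$ would otherwise have to decrease by a fixed positive amount at every large index, which is impossible. The only cosmetic difference is in the second part, where you pass from $\norm{x_k-T(x_k-vg_k)}$ to $\norm{x_k-T(x_k)}$ via $(a+b)^2\le 2a^2+2b^2$ before running the contradiction, whereas the paper first bounds $\liminf_{k\to\infty}\norm{x_k-T(x_k-vg_k)}^2$ and then invokes the boundedness of $\{x_k\}$ to transfer to the residual; this changes only the value of the constant $M$, which the theorem leaves unspecified.
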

\begin{proof}
See {\ifextended\ref{appendix:constantproof}\else Appendix~D\fi} for the proof of this theorem.
\end{proof}

\subsection{Diminishing step-size rule}
Finally, we prove the weak convergence theorem of Algorithm~\ref{alg:main}.
To let the generated sequence weakly converge to some optimum, we can use a specific step-size called a diminishing step-size.
The following theorem describes this condition and shows that the generated sequence weakly converges.

\begin{thm}\label{thm:dsr}
Let $\{x_k\}\subset\Hsp$ be a sequence generated by Algorithm~\ref{alg:main}.
Suppose that
\begin{enumerate}[(i)]
\item
Assumptions~\ref{assum:basic} and \ref{assum:0} hold,
\item
and the real sequence $\{v_k\}\subset(0,\infty)$ satisfies
\begin{align*}
\lim_{k\to\infty}v_k=0
\text{, and }
\sum_{k=1}^\infty v_k=\infty.
\end{align*}
\end{enumerate}
Then, there exists a subsequence of the generated sequence $\{x_k\}$ which converges weakly to a point in $X^\star$.
In addition, if
\begin{enumerate}[(i)]
\setcounter{enumi}{2}
\item
the whole space $\Hsp$ is an $N$-dimensional Euclidean space $\real^N$,
\item\label{M:14}
and the solution $x^\star\in X^\star$ is unique,
\end{enumerate}
then, the whole sequence $\{x_k\}$ converges to this unique solution $x^\star$.
\end{thm}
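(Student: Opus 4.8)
The plan is to establish the two conclusions in turn, using Lemmas~\ref{lem:fun} and \ref{lem:dist} as the only substantive inputs. Throughout I would fix $x^\star\in X^\star$ (nonempty and contained in $\Fix(T)\cap D$ by Assumption~(A\ref{a3:nonempty})), write $b_k:=\norm{x_k-x^\star}^2$ and $e_k:=\norm{x_k-T(x_k)}^2+\max\{f(x_k)-f_\star,0\}$, and use Assumption~(A\ref{assum:alpha}) to fix $\delta\in(0,1)$ with $1-\alpha_k\ge\delta$ for all large $k$; note $\{v_k\}$ is bounded because $v_k\to0$, and that nonexpansivity of $T$ with $\norm{g_k}=1$ gives $\norm{x_k-T(x_k-v_kg_k)}\ge\norm{x_k-T(x_k)}-v_k$. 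Lemma~\ref{lem:fun} controls $b_{k+1}$ through the optimality gap $f(x_k)-f_\star$ whenever $f_\star<f(x_k)$, while Lemma~\ref{lem:dist} controls $b_{k+1}$ through the fixed-point residual $\norm{x_k-T(x_k-v_kg_k)}$ for every $k$.

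First I would show $\liminf_{k\to\infty}e_k=0$ by contradiction: if $e_k\ge c>0$ for all large $k$, then for each such $k$ either $f(x_k)-f_\star\ge c/2$, in which case Lemma~\ref{lem:fun} and $v_k\to0$ give $b_{k+1}\le b_k-\delta(c/(2L))^{1/\beta}v_k$ for $k$ large, or $\norm{x_k-T(x_k)}^2\ge c/2$, in which case $\norm{x_k-T(x_k-v_kg_k)}\ge\frac{1}{2}\sqrt{c/2}$ for $k$ large and Lemma~\ref{lem:dist} gives $b_{k+1}\le b_k-\delta c/16+v_kM_1\le b_k-\delta c/32$ for $k$ large; since $v_k\to0$ this yields $b_{k+1}\le b_k-\eta v_k$ with a fixed $\eta>0$ for all large $k$, and summing against $\sum v_k=\infty$ forces $b_k\to-\infty$, a contradiction. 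Consequently there is a subsequence along which $\norm{x_k-T(x_k)}\to0$ and $\limsup f(x_k)\le f_\star$; extracting (by Assumption~(A\ref{assum:bounded})) a weakly convergent further subsequence $x_{k_j}\wto\bar x$, I would conclude $\bar x\in D$ from weak closedness of $D$, $\bar x\in\Fix(T)$ from demiclosedness of $\Id-T$ at the origin \cite[Chapter~4]{bc}, hence $f(\bar x)\ge f_\star$, and finally $f(\bar x)\le\liminf_j f(x_{k_j})\le f_\star$ by Proposition~\ref{prop:weaklycontinuous}, so $\bar x\in X^\star$. This gives the first assertion.

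For the second assertion, suppose $\Hsp=\real^N$ and $X^\star=\{x^\star\}$; then weak convergence is norm convergence, so the above already yields $\liminf_k b_k=0$, and it remains to rule out $\limsup_k b_k>0$. Assuming $b_k\ge2\epsilon$ infinitely often for some $\epsilon>0$, I would work on the compact set $A_\epsilon:=\{x\in D:\norm{x-x^\star}^2\ge\epsilon,\ \norm{x}\le R\}$ ($R$ chosen with $\{x_k\}\subset\{\norm{x}\le R\}$) and note that the continuous function $\phi(x):=\max\{f(x)-f_\star,0\}+\mathrm{dist}(x,\Fix(T))$ is strictly positive there — a zero of $\phi$ lies in $\Fix(T)\cap D$ with $f\le f_\star$, hence in $X^\star=\{x^\star\}$, contradicting $\norm{x-x^\star}^2\ge\epsilon$ — so $\phi\ge c(\epsilon)>0$ on $A_\epsilon$. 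Using $\liminf_k b_k=0<\epsilon$ together with $b_k\ge2\epsilon$ infinitely often, I would pick $l<m$ with $l$ arbitrarily large, $b_l<\epsilon$, $b_m\ge2\epsilon$, and $\epsilon\le b_k<2\epsilon$ for $l<k<m$; then $x_k\in A_\epsilon$ for $l<k<m$, so applying Lemma~\ref{lem:fun} (when $f(x_k)-f_\star\ge c(\epsilon)/2$) or Lemma~\ref{lem:dist} (when $\mathrm{dist}(x_k,\Fix(T))\ge c(\epsilon)/2$, which forces $\norm{x_k-T(x_k-v_kg_k)}\ge c(\epsilon)/4$) gives $b_{k+1}\le b_k$ for all such $k$ once $l$ exceeds the finitely many large-$k$ thresholds. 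Hence $b_m\le b_{m-1}\le\cdots\le b_{l+1}\le b_l+v_lM_1<2\epsilon$ for $l$ large, a contradiction; therefore $b_k\to0$, i.e. $x_k\to x^\star$.

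The main obstacle, and the reason the proof differs from the classical projected-subgradient analyses \cite{kiwiel2001,yaohua2015,konnov2003}, is that the iterates need not be feasible — they need not lie in $\Fix(T)$, so $f(x_k)\ge f_\star$ cannot be assumed and Lemma~\ref{lem:fun} is by itself useless. The device that resolves this is the single Lyapunov quantity $e_k$, which I feed alternately into Lemmas~\ref{lem:fun} and \ref{lem:dist}. In the second part, the delicate point is the compactness and strict-positivity argument for $\phi$, which is exactly where finite-dimensionality and uniqueness of the minimizer enter; the fact that one obtains only $\liminf_k e_k=0$ (not $\lim$) is why in general only a subsequence is shown to converge.
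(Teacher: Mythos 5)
Your argument reaches both conclusions by a route genuinely different from the paper's. For the subsequence claim, the paper splits into the case where $\norm{x_k-x^\star}$ is eventually nonincreasing and its negation, invoking Lemma~\ref{lem:dsr} and Opial's lemma (Proposition~\ref{prop:opial}); you instead run a single contradiction on the Lyapunov quantity $e_k$ against $\sum_k v_k=\infty$, then use demiclosedness of $\Id-T$ and Proposition~\ref{prop:weaklycontinuous}. That is sound and shorter for what the theorem actually claims (the paper's monotone case in fact yields weak convergence of the whole sequence, which your argument does not give, but this is not asserted in the statement). For the additional statement, the paper's appendix again argues through the monotone/non-monotone dichotomy, whereas you use a crossing argument between the levels $\epsilon$ and $2\epsilon$ of $b_k$ combined with compactness of $A_\epsilon$ and strict positivity of a merit function $\phi$; finite dimensionality and uniqueness of $x^\star$ enter exactly there, as they do in the paper. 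Both parts of your plan are structurally valid, and your use of Lemmas~\ref{lem:fun} and \ref{lem:dist} matches the way the paper handles possibly infeasible iterates.

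One step must be repaired: the claim that $\mathrm{dist}(x_k,\Fix(T))\ge c(\epsilon)/2$ forces $\norm{x_k-T(x_k-v_kg_k)}\ge c(\epsilon)/4$. Nonexpansivity gives $\norm{x_k-T(x_k-v_kg_k)}\ge\norm{x_k-T(x_k)}-v_k$, but the residual $\norm{x_k-T(x_k)}$ need not be comparable to the distance to $\Fix(T)$: for example, $T:=(1-\lambda)\Id+\lambda P_C$ with $\lambda\in(0,1/2]$ is firmly nonexpansive with $\Fix(T)=C$ and $\norm{x-T(x)}=\lambda\,\mathrm{dist}(x,C)$, so no universal fraction such as $1/2$ is available. (Your analogous step in the first part is fine, because there $e_k$ is built from the residual itself.) The fix is immediate and preserves your structure: either define $\phi(x):=\max\{f(x)-f_\star,0\}+\norm{x-T(x)}$, whose zero set intersected with $D$ is still $\{x^\star\}$, so $\phi\ge c(\epsilon)$ on $A_\epsilon$ by the same compactness argument and then $\norm{x_k-T(x_k)}\ge c(\epsilon)/2$ yields $\norm{x_k-T(x_k-v_kg_k)}\ge c(\epsilon)/2-v_k$; or keep $\phi$ as you wrote it and add a second compactness step, noting that on the compact set $A_\epsilon\cap\{x:\mathrm{dist}(x,\Fix(T))\ge c(\epsilon)/2\}$ the continuous function $x\mapsto\norm{x-T(x)}$ attains a positive minimum, since any zero would be a fixed point. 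With either patch, the monotonicity $b_{k+1}\le b_k$ along the crossing segment and the contradiction $b_m<2\epsilon$ go through, and the remainder of your proof is correct.
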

Assumption~(A\ref{a3:nonempty}) and Proposition~\ref{prop:uniqueness} show that the strict quasiconvexity of the objective function is a sufficient condition for the uniqueness of the solution $x^\star\in X^\star$.

Before proving the above theorem, we prove the following lemma which will be needed later.
\begin{lem}\label{lem:dsr}
Suppose that Assumptions~\ref{assum:basic} and \ref{assum:0} hold, and suppose that the real sequence $\{v_k\}\subset(0,\infty)$ satisfies
\begin{align*}
\lim_{k\to\infty}v_k=0
\text{, and }
\sum_{k=1}^\infty v_k=\infty.
\end{align*}
Let $\{x_k\}\subset\Hsp$ be the sequence generated by Algorithm~\ref{alg:main} with this real sequence $\{v_k\}$.
Then,
\begin{align*}
\liminf_{k\to\infty}f(x_k)\le f_\star
\end{align*}
holds.
\end{lem}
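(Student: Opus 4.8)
The plan is to argue by contradiction, using Lemma~\ref{lem:fun} to set up a telescoping (Fej\'er-type) estimate, in the classical style of diminishing-step-size subgradient convergence proofs. Suppose, contrary to the claim, that $\liminf_{k\to\infty}f(x_k)>f_\star$. Then there exist $\epsilon>0$ and an index $K_0\in\nat$ with $f(x_k)\ge f_\star+\epsilon$ for every $k\ge K_0$; in particular $f_\star<f(x_k)$ for all such $k$, so Lemma~\ref{lem:fun} is applicable at each $k\ge K_0$ (its hypotheses, Assumptions~\ref{assum:basic} and (A\ref{assum:holder}), are in force by assumption). Fix any $x^\star\in X^\star$, which is nonempty by (A\ref{a3:nonempty}).

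Next I would extract uniform constants from Assumption~\ref{assum:0}. By (A\ref{assum:alpha}) there is $\bar a\in(0,1)$ and an index $K_1\ge K_0$ with $\alpha_k\le\bar a$ for all $k\ge K_1$, hence $1-\alpha_k\ge\delta:=1-\bar a>0$. Since $L>0$ and $\beta>0$, the map $t\mapsto (t/L)^{1/\beta}$ is increasing on $(0,\infty)$, so $\bigl((f(x_k)-f_\star)/L\bigr)^{1/\beta}\ge (\epsilon/L)^{1/\beta}=:c>0$ for all $k\ge K_0$. Substituting these two bounds into the inequality of Lemma~\ref{lem:fun} and using $v_k>0$, $1-\alpha_k\le 1$ gives
\begin{align*}
\norm{x_{k+1}-x^\star}^2\le\norm{x_k-x^\star}^2-2\delta c\,v_k+v_k^2\qquad(k\ge K_1).
\end{align*}

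Finally, because $v_k\to 0$, choose $K_2\ge K_1$ with $v_k\le\delta c$ for all $k\ge K_2$; then $v_k^2\le\delta c\,v_k$, and the recursion sharpens to $\norm{x_{k+1}-x^\star}^2\le\norm{x_k-x^\star}^2-\delta c\,v_k$ for $k\ge K_2$. Telescoping from $K_2$ to $n$ yields
\begin{align*}
0\le\norm{x_{n+1}-x^\star}^2\le\norm{x_{K_2}-x^\star}^2-\delta c\sum_{k=K_2}^{n}v_k,
\end{align*}
and letting $n\to\infty$ the right-hand side tends to $-\infty$ since $\sum_{k=1}^\infty v_k=\infty$ and $\delta c>0$ — a contradiction. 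Hence $\liminf_{k\to\infty}f(x_k)\le f_\star$.

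There is no deep obstacle here once Lemma~\ref{lem:fun} is available; the argument is essentially routine. The two points needing care are: (a) obtaining a \emph{uniform} positive lower bound $1-\alpha_k\ge\delta>0$ from (A\ref{assum:alpha}), so that the descent term in Lemma~\ref{lem:fun} does not degenerate; and (b) absorbing the quadratic term $v_k^2$ into the linear term $2\delta c\,v_k$ for all large $k$, which is exactly where $v_k\to 0$ is used, with $\sum_k v_k=\infty$ then forcing the contradiction. Note that Assumption~(A\ref{assum:bounded}) is not needed for this lemma; only Assumptions~\ref{assum:basic}, (A\ref{assum:holder}), (A\ref{assum:alpha}) and the step-size conditions enter.
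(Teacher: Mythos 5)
Your proof is correct and follows essentially the same route as the paper's: argue by contradiction, apply Lemma~\ref{lem:fun} once $f_\star<f(x_k)$ holds for large $k$, absorb the quadratic term $v_k^2$ using $v_k\to 0$, and telescope, with $\limsup_{k\to\infty}\alpha_k<1$ giving the uniform lower bound on $1-\alpha_k$ and $\sum_k v_k=\infty$ forcing the contradiction. The only difference is cosmetic ordering (you invoke the bound on $1-\alpha_k$ before absorbing $v_k^2$, the paper afterwards), and your remark that (A\ref{assum:bounded}) is not needed matches the paper's argument.
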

\begin{proof}
See {\ifextended\ref{appendix:dsrlemma}\else Appendix~E\fi} for the proof of this theorem.
\end{proof}

Now let us prove Theorem~\ref{thm:dsr} with the above result.
\begin{proof}[Proof of Theorem~\ref{thm:dsr}]
Let the limit superior of the real sequence $\{\alpha_k\}$ be denoted by $\bar{\alpha}\in(0,1)$.
Fix $x^\star\in X^\star$ arbitrarily.
We will prove the assertion by separating the problem into two cases: the case where there exists a number $k_0\in\nat$ such that $\norm{x_{k+1}-x^\star}\le\norm{x_k-x^\star}$ for all $k\ge k_0$, and its negation.

First, let us consider the positive case; i.e., there exists a number $k_0\in\nat$ such that $\norm{x_{k+1}-x^\star}\le\norm{x_k-x^\star}$ for all $k\ge k_0$.
The property of the limit superior guarantees the existence of a number $k_1\ge k_0$ such that $\alpha_k<\bar{\alpha}+(1-\bar{\alpha})/2$ for all $k\ge k_1$.
Therefore, applying this relationship between $\bar{\alpha}$ and $\alpha_k$ to Lemma~\ref{lem:dist}, an estimate of $\norm{x_k-T(x_k-v_kg_k)}$ for any $k\ge k_1$ can be obtained as follows:
\begin{align}
\frac{1}{2}(1-\bar{\alpha})\norm{x_k-T(x_k-v_kg_k)}^2
&\le(1-\alpha_k)\norm{x_k-T(x_k-v_kg_k)}^2\nonumber\\
&\le\norm{x_k-x^\star}^2-\norm{x_{k+1}-x^\star}^2+v_kM_1.\label{ineq:d1}
\end{align}
In this case, the monotonicity and boundedness of the subsequence $\{\norm{x_k-x^\star}\}_{k\ge k_1}$ are assured.
Hence, this subsequence converges to some nonnegative real.
Since we have assumed that the real sequence $\{v_k\}$ converges to zero, the left-hand side of inequality~\eqref{ineq:d1} converges to zero.
On the other hand, $\norm{x_k-T(x_k)}$ for each $k\in\nat$ can be expanded with the triangle inequality and by noting the nonexpansivity of $T$ as follows:
\begin{align*}
\norm{x_k-T(x_k)}
&\le\norm{x_k-T(x_k-v_kg_k)}+\norm{T(x_k-v_kg_k)-T(x_k)}\\
&\le\norm{x_k-T(x_k-v_kg_k)}+v_k.
\end{align*}
From the assumption of this theorem and the previous discussion, both terms on the right-hand side above converge to zero with respect to $k$.
Hence, we find that the real sequence $\{\norm{x_k-T(x_k)}\}$ converges to zero.

The property of the limit inferior of the real sequence $\{f(x_k)\}$ guarantees the existence of a subsequence $\{f(x_{k_i})\}$ converging to $\liminf_{k\to\infty}f(x_k)$.
Note that Lemma~\ref{lem:dsr} asserts that this limit inferior is less than or equal to the minimum value $f_\star$.
There exist a point $u\in\Hsp$ and a subsequence $\{x_{k_{i_j}}\}\subset\{x_{k_i}\}$ converging weakly to the point $u$, since $\{x_{k_i}\}$ is a bounded sequence in $\Hsp$.
Now, suppose that $u$ is not a fixed point of $T$.
Since the real sequence $\{\norm{x_k-T(x_k)}\}$ converges to zero and $T$ is a nonexpansive mapping, Proposition~\ref{prop:opial} produces a contradiction as follows:
\begin{align*}
\liminf_{j\to\infty}\norm{x_{k_{i_j}}-u}
&<\liminf_{j\to\infty}\norm{x_{k_{i_j}}-T(u)}\\
&\le\liminf_{j\to\infty}\left(\norm{x_{k_{i_j}}-T\left(x_{k_{i_j}}\right)}+\norm{T(x_{k_{i_j}})-T(u)}\right)\\
&\le\liminf_{j\to\infty}\norm{x_{k_{i_j}}-u}.
\end{align*}
Therefore, we can see that $u$ is a fixed point of $T$.
Proposition~\ref{prop:weaklycontinuous} means that the objective functional $f$ has weakly lower semicontinuity.
The weak convergence of the sequence $\{x_{k_{i_j}}\}$ implies
\begin{align*}
f(u)
\le\liminf_{j\to\infty}f(x_{k_{i_j}})
=\lim_{i\to\infty}f(x_{k_i})
\le f_\star;
\end{align*}
that is, $u$ is an optimum.

To deal with the positive case, let us consider the weak convergence of $\{x_k\}$ and its subsequences.
Take another subsequence $\{x_{k_{i_l}}\}\subset\{x_{k_i}\}$ that converges weakly to a point $v\in\Hsp$.
A similar discussion to the one for obtaining $u\in X^\star$ ensures that the point $v$ is also an optimum.
To show the uniqueness of the weak accumulation points of the sequence $\{x_{k_i}\}$, let us assume that $u\neq v$.
Since the sequence $\{\norm{x_k-x^\star}\}$ converges, Proposition~\ref{prop:opial} leads us to a contradiction:
\begin{align*}
\lim_{k\to\infty}\norm{x_k-u}
&=\lim_{j\to\infty}\norm{x_{k_{i_j}}-u}
<\lim_{j\to\infty}\norm{x_{k_{i_j}}-v}\\
&=\lim_{j\to\infty}\norm{x_k-v}
=\lim_{l\to\infty}\norm{x_{k_{i_l}}-v}
<\lim_{l\to\infty}\norm{x_{k_{i_l}}-u}\\
&=\lim_{k\to\infty}\norm{x_k-u}.
\end{align*}
Hence, we can see that $u$ is the same point as $v$, and the uniqueness of all weak accumulation points of the sequence $\{x_{k_i}\}$ is proven.
This uniqueness implies that the sequence $\{x_{k_i}\}$ converges weakly to $u\in X^\star$.
Now take another subsequence $\{x_{k_m}\}\subset\{x_k\}$ that converges weakly to a point $w\in\Hsp$, and suppose that $u$ is a different point from $v$.
From the fact that the sequence $\{\norm{x_k-x^\star}\}$ converges and from Proposition~\ref{prop:opial}, we can deduce that
\begin{align*}
\lim_{k\to\infty}\norm{x_k-u}
&=\lim_{i\to\infty}\norm{x_{k_i}-u}
<\lim_{i\to\infty}\norm{x_{k_i}-w}\\
&=\lim_{j\to\infty}\norm{x_k-w}
=\lim_{m\to\infty}\norm{x_{k_m}-w}
<\lim_{m\to\infty}\norm{x_{k_m}-u}\\
&=\lim_{k\to\infty}\norm{x_k-u}.
\end{align*}
However, this is a contradiction.
Hence, the sequence $\{x_k\}$ converges weakly to some optimum.
This proves the positive case.

Next, let us consider the negative case, in other words, the case where a subsequence $\{x_{k_i}\}\subset\{x_k\}$ exists that satisfies $\norm{x_{k_i}-x^\star}<\norm{x_{k_i+1}-x^\star}$ for all $i\in\nat$.
A similar discussion to the one for finding the number $k_1$ in the positive case guarantees the existence of $i_0\in\nat$ satisfying $\alpha_{k_i}<\bar{\alpha}+(1-\bar{\alpha})/2$ for all $i\ge i_0$.
The distances from the point $x^\star$ to each point $x_{{k_i}+1}$ where $i\ge i_0$ from Lemma~\ref{lem:dist} are as follows:
\begin{align*}
\norm{x_{k_i+1}-x}^2
&\le\norm{x_{k_i}-x}^2-(1-\alpha_{k_i})\norm{x_{k_i}-T(x_{k_i}-v_{k_i}g_{k_i})}^2+v_{k_i}M_1\\
&\le\norm{x_{k_i}-x}^2-\frac{1}{2}(1-\bar{\alpha})\norm{x_{k_i}-T(x_{k_i}-v_{k_i}g_{k_i})}^2+v_{k_i}M_1.
\end{align*}
Here, we have assumed that $\norm{x_{k_i+1}-x^\star}$ is greater than $\norm{x_{k_i}-x^\star}$ for all $i\in\nat$ and the real sequence $\{v_k\}$ converges to zero.
Thus, the above inequality implies that the real sequence $\{\norm{x_{k_i}-T(x_{k_i}-v_{k_i}g_{k_i})}\}$ converges to zero with respect to $i$.
On the other hand, the distances $\norm{x_{k_i}-T(x_{k_i})}$ for each $i\in\nat$ can be estimated from the nonexpansivity of $T$ as follows:
\begin{align*}
\norm{x_{k_i}-T(x_{k_i})}
&\le\norm{x_{k_i}-T(x_{k_i}-v_{k_i}g_{k_i})}+\norm{T(x_{k_i}-v_{k_i}g_{k_i})-T(x_{k_i})}\\
&\le\norm{x_{k_i}-T(x_{k_i}-v_{k_i}g_{k_i})}+v_{k_i}.
\end{align*}
Since both terms of the right-hand side of the above inequality converge to zero, the left-hand side also converges to zero.

We will proceed by way of contradiction; suppose that $\limsup_{i\to\infty}f(x_{k_i})>f_\star$.
This implies the existence of $\delta>0$ and a subsequence $\{x_{k_{i_j}}\}\subset\{x_{k_i}\}$ such that $f_\star+\delta<f(x_{k_{i_j}})$ for all $j\in\nat$.
Since $\norm{x_{k_i}-x^\star}<\norm{x_{k_i+1}-x^\star}$ and $f_\star<f(x_{k_{i_j}})$ hold for any $j\in\nat$, we can use Lemma~\ref{lem:fun} to get
\begin{align*}
v_{k_{i_j}}\left(1-\alpha_{k_{i_j}}\right)\left(2\left(\frac{\delta}{L}\right)^{\frac{1}{\beta}}-v_{k_{i_j}}\right)<0
\end{align*}
for all $j\in\nat$.
The above inequality does not hold for sufficiently large $j\in\nat$, since the real sequence $\{v_k\}$ converges to zero.
Therefore, we arrive at a contradiction, and thus, $\limsup_{i\to\infty}f(x_{k_i})\le f_\star$.

The boundedness of the sequence $\{x_{k_i}\}$ guarantees the existence of a subsequence $\{x_{k_{i_j}}\}\subset\{x_{k_i}\}$ that weakly converges to some point $u\in\Hsp$.
To show that $u$ is a fixed point of the mapping $T$, let us assume that it is not.
Recall that the real sequence $\{\norm{x_{k_i}-T(x_{k_i})}\}$ converges to zero.
Hence, the nonexpansivity of $T$ together with Proposition~\ref{prop:opial} produces a contradiction,
\begin{align*}
\liminf_{j\to\infty}\norm{x_{k_{i_j}}-u}
&<\liminf_{j\to\infty}\norm{x_{k_{i_j}}-T(u)}\\
&\le\liminf_{j\to\infty}\left(\norm{x_{k_{i_j}}-T(x_{k_{i_{j_l}}})}+\norm{T(x_{k_{i_j}})-T(u)}\right)\\
&\le\liminf_{j\to\infty}\norm{x_{k_{i_j}}-u}.
\end{align*}
Therefore, we have $u\in\Fix(T)$. In addition, Proposition~\ref{prop:weaklycontinuous} means that the objective functional $f$ has weakly lower semicontinuity.
Hence,
\begin{align*}
f(u)
\le\liminf_{j\to\infty}f(x_{k_{i_j}})
\le\limsup_{i\to\infty}f(x_{k_i})
\le f_\star
\end{align*}
holds.
This implies conclusively that there exists a subsequence of $\{x_k\}$ which weakly converges to the optimum $u\in X^\star$.

We omit the proof of the additional part of this theorem.
The complete proof is in {\ifextended\ref{proof:thm:dsr}\else Appendix~F\fi}.
\end{proof}

\section{Numerical Experiments}\label{sec:experiment}
To confirm that Algorithm~\ref{alg:main} converges to the optimum and evaluate its performance, we ran it and an existing algorithm \cite[Algorithm~(14)]{kiwiel2001} on a concrete constrained quasiconvex optimization problem, i.e., the Cobb-Douglas production efficiency problem \cite[Problem~(3.13)]{bradley1974}, \cite[Problem~(6.1)]{yaohua2015}, \cite[Section~1.7]{stancu1997}.
Let us redefine Example~\ref{example:3} with concrete constraints as follows.
\begin{problem}[{\cite[Problem~(3.13)]{bradley1974}, \cite[Problem~(6.1)]{yaohua2015}, \cite[Section~1.7]{stancu1997}}]\label{prob:experiment}
Suppose that $\Hsp:=\real^n$.
Let $a_0,c_0>0$ and let $a,c\in(0,\infty)^n$ such that $\sum_{i=1}^na_i=1$.
Furthermore, let $b_i\in[0,\infty)^n$, $\underline{p}_i\in[0,\infty)^n$, and $\overline{p}_i\in(0,\infty]^n$ for $i=1,2,\ldots,m$.
Then, we would like to
\begin{align*}
\text{minimize }
& f(x):=\begin{cases}
\frac{-a_0\prod_{j=1}^nx_j^{a_j}}{\ip{c,x}+c_0}&(x\in[0,\infty)^n),\\
0&(\text{\rm otherwise}),
\end{cases}\\
\text{subject to }
& \underline{p}_i\le\ip{b_i,x}\le\overline{p}_i\quad(i=1,2,\ldots,m),\\
& x\in D:=[0,M]^n,
\end{align*}
where $M>0$.
\end{problem}
This problem is an instance of Example~\ref{example:3}.
Indeed, metric projections onto any closed half spaces, including boxes, can be computed explicitly \cite[Example~29.20]{bc}, and the transformations~(T\ref{t:average}--\ref{t:firmup}) enable us to build a firmly nonexpansive mapping whose fixed point set coincides with their intersection.
Our GitHub repository, \GitHubURL{}, provides the means to make a metric projection onto a given half space and the transformations (T\ref{t:average}--\ref{t:firmup}).
We used these implementation in the following experiments.

Before discussing our experiments, let us examine the background of this problem.
The goal is to find the most efficient production factors under funding-level restrictions \cite[Section~6]{yaohua2015}.
As mentioned in Section~\ref{sec:convergence}, the objective function $f$ represents the ratio between the total profit (what is obtained) and the total cost (how much expenditure is required) as an efficiency indicator.
We also described how the total profit and the total cost are modeled in Section~\ref{sec:convergence}.
The total profit is the numerator of the objective function and is modeled with the Cobb-Douglas production function on the production factors $x\in\real^n$.
The total cost is the denominator of the objective function and is modeled with the affine function on the production factors $x\in\real^n$.
There are a variety of constraints on the funding level \cite[Section~6]{yaohua2015}.
These constraints represent the duties and restrictions of each production project $i=1,2,\ldots,m$.
These indicators are modeled with affine functions and we set two parameters $\underline{p}_i, \overline{q}_i$ as lower and upper bounding constraints to the indicator of each project $i=1,2,\ldots,m$.

We conducted numerical experiments in three cases.
First, in the unbounded constraint case, which is treated in the existing literature \cite{yaohua2015}, we set $M:=100$ and did not guarantee the uniqueness of the optima, which is required for letting the generated sequence converge.
Second, in the bounded constraint case, we set $M:=100$ and guaranteed the uniqueness of optima, as shown in Section~\ref{sec:convergence}.
In practice, we cannot manufacture products infinitely because there are many restrictions on the amount of materials, capital, human resources, number and/or capacity of machines, environments, and so on.
Therefore, this case has realistic experimental assumptions for optimizing production efficiency.
Furthermore, we conducted an optimization over the generalized convex feasible sets.

We compared Algorithm 1 with the exact quasi-subgradient method (QSM) \cite[Algorithm~(14)]{kiwiel2001}.
In order for it to run, this algorithm requires a computation of the metric projection onto the feasible set $X\cap D$. Here, we used a trust-region algorithm for constrained optimization, \texttt{trust-constr}, implemented as the \texttt{scipy.optimize.minimize} solver provided by the SciPy fundamental library for scientific computing \cite{scipy}.
This algorithm also requires the step-sizes $\{v_k\}$ for it to run.
We set its error tolerance a tenth of $v_k$ for each $k=1,2,\ldots$.
That is, we solved the subproblem to
\begin{align*}
\text{find } & \norm{x_{k+1}-(x_k-v_kg_k)}^2\le\min_{u\in X}\norm{u-(x_k-v_kg_k)}^2+\frac{v_k}{10}\\
\text{subject to } & x_{k+1}\in X,
\end{align*}
as the computation of the metric projection in step~\ref{alg:main:improve} with the existing optimization solver.

\label{M:2}
  In contrast to the existing algorithm which finds the metric projection onto the constraint set in the above way, we constructed a firmly nonexpansive mapping whose fixed point set coincides with the constraint set and gave it to Algorithm~\ref{alg:main}.
  Here, the constraint set is the intersection of the half-spaces $\{x:\underline{p}_i\le\ip{b_i,x}\}$ and $\{x:\ip{b_i,x}\le\overline{p}_i\}$ for $i=1,2,\ldots,m$.
  The metric projection onto each half-space can be easily computed \cite[Example~29.20]{bc}:
  \begin{align*}
    P_{\{x:\underline{p}_i\le\ip{b_i,x}\}}(x):=\begin{cases}
      x & (\underline{p}_i\le\ip{b_i,x}), \\
      x+\frac{\underline{p}_i-\ip{b_i,x}}{\norm{b_i}^2}b_i & (\text{otherwise})
    \end{cases},\\
    P_{\{x:\ip{b_i,x}\le\overline{p}_i\}}(x):=\begin{cases}
      x & (\ip{b_i,x}\le\overline{p_i}), \\
      x+\frac{\overline{p}_i-\ip{b_i,x}}{\norm{b_i}^2}b_i & (\text{otherwise})
    \end{cases}
  \end{align*}
  for all $i=1,2,\ldots,m$ and for any $x\in\Hsp$.
  To construct a nonexpansive mapping whose fixed point set coincides with the intersection of the above sets, we use the transformation (T\ref{t:average}) and construct
  \begin{align*}
    \tilde{T}(x):=\frac{1}{m}\sum_{i=1}^m\frac{P_{\{x:\underline{p}_i\le\ip{b_i,x}\}}(x)+P_{\{x:\ip{b_i,x}\le\overline{p}_i\}}(x)}{2}
  \end{align*}
  for any $x\in\Hsp$.
  This mapping $\tilde{T}$ is nonexpansive, but not firmly nonexpansive.
  Therefore, we convert the nonexpansive mapping $\tilde{T}$ into the corresponding firmly nonexpansive one $T$ by using the transformation (T\ref{t:firmup}):
  \begin{align*}
    T:=\frac{\Id+\tilde{T}}{2}.
  \end{align*}
  We gave $T$ to Algorithm~\ref{alg:main} in the experiment.

Our experimental environment was as follows: Python~3.6.6 with NumPy~1.15.0 \cite{numpy} and SciPy~1.1.0 \cite{scipy} libraries on macOS High Sierra version 10.13.6 on Mac Pro (Late 2013) with a 3 GHz 8 Cores Intel Xeon E5 CPU and 32GB 1800MHz DDR3 memory.
We used the \texttt{time.process\_time} method for the evaluating computational time of each algorithm.
The method was implemented with the \texttt{clock\_gettime(2)} system call and had a $10^{-6}$ second resolution.
Our GitHub repository, \GitHubURL{}, provides the codes that were used in the experiments.
It has the implementations of Algorithm~\ref{alg:main} and QSM and miscellaneous utilities including higher-order functions to be used for composing a nonexpansive mapping.

We ran Algorithm~\ref{alg:main} and QSM with five different randomly chosen initial points, limited their computational time to ten seconds, and evaluated the average of the computed number of iterations $k$ and the following values:
\begin{align*}
V_\text{func}:=\frac{1}{8}\sum_{i=1}^8f(x^\star_{(i)}),\quad
V_\text{dist}:=\frac{1}{8}\sum_{i=1}^8\norm{x^\star_{(i)}-T(x^\star_{(i)})},
\end{align*}
where $x^\star_{(i)}$ is the solution obtained for each sampling $i=1,2,\ldots,8$.

\subsection{Unbounded constraint case}
Here, we ran Algorithm~\ref{alg:main} and QSM on Problem~\ref{prob:experiment} with the following settings:
$n:=100$; $m:=100$;
$a_0,c_0\in(0,10]$, $\tilde{a}\in(0,1]^n$, $c\in(0,10]^n$ were chosen randomly; $a:=\tilde{a}/\sum_{i=1}^na_i$;
$b_i\in[0,1)^n$, $\underline{p}_i\in[0,25\norm{b_i})$ were chosen randomly for each $i=1,2,\ldots,m$; $\overline{p}_i:=\infty$ for all $i=1,2,\ldots,m$;
and $M:=+\infty$.

The experimental results are shown in Table~\ref{table:ubdd}.
\begin{table}[htbp]
  \centering
  \caption{Results of unbounded constraint case.}\label{table:ubdd}
  \begin{tabular}{l|rrr}
    \toprule
    & $k$ & $V_\text{func}$ & $V_\text{dist}$ \\
    \midrule
    Alg.~\ref{alg:main} ($v_k:=10^{-1}$) & 11495.5 & -0.00092189 & $3.18571477\times 10^{-13}$ \\
    Alg.~\ref{alg:main} ($v_k:=10^{-2}$) & 11539.5 & -0.00061939 & $3.17169706\times 10^{-13}$ \\
    Alg.~\ref{alg:main} ($v_k:=10^{-3}$) & 11459.2 & -0.00049238 & $3.10991757\times 10^{-13}$ \\
    Alg.~\ref{alg:main} ($v_k:=10^{-1}/k$) & 11474.5 & -0.00046070 & $3.05626418\times 10^{-13}$ \\
    Alg.~\ref{alg:main} ($v_k:=10^{-2}/k$) & 11264.5 & -0.00045498 & $3.16790812\times 10^{-13}$ \\
    Alg.~\ref{alg:main} ($v_k:=10^{-3}/k$) & 11318.4 & -0.00045426 & $3.28061131\times 10^{-13}$ \\
    QSM ($v_k:=10^{-1}$) & 160.4 & -0.00050596 & $4.04768150\times 10^{-13}$ \\
    QSM ($v_k:=10^{-2}$) & 39.6 & -0.00045707 & $4.12640954\times 10^{-13}$ \\
    QSM ($v_k:=10^{-3}$) & 25.1 & -0.00045306 & $4.00174170\times 10^{-13}$ \\
    QSM ($v_k:=10^{-1}/k$) & 36.1 & -0.00045750 & $4.23263890\times 10^{-13}$ \\
    QSM ($v_k:=10^{-2}/k$) & 26.6 & -0.00045353 & $3.99258739\times 10^{-13}$ \\
    QSM ($v_k:=10^{-3}/k$) & 20.5 & -0.00045275 & $4.10328799\times 10^{-13}$ \\
    \bottomrule
  \end{tabular}
\end{table}
The proposed algorithm (Algorithm~\ref{alg:main}) can iterate the computation more times than the existing one within the same computational time.
Algorithm~\ref{alg:main} does not require any subproblem to be solved, while QSM requires one to be solved in order to find a metric projection onto the constraint set.
Therefore, the required time for computing an iteration of Algorithm~\ref{alg:main} is much less than that of QSM.
According to the values of $D$, both Algorithm~\ref{alg:main} and QSM for any step-size (and no matter whether a constant or diminishing step-size rule was used) can obtain the solution belonging to the constraint set.
Indeed, our experimental environment (NumPy) used the \texttt{float64} data type (double precision float: sign bit, 11-bit exponent, and 52-bit mantissa) to express a real number, and its resolution is $10^{-15}$.
By considering the number of dimensions as well, we can regard all values of $D$ to be almost zero.
Let us examine the functional values of the obtained solutions.
When we applied Algorithm~\ref{alg:main} and QSM to the problem with the same step-size, we found that the function value of the solution obtained by Algorithm~\ref{alg:main} is better than that of QSM.
In particular, the function value obtained by Algorithm~\ref{alg:main} with $v_k:=10^{-1}$ is nearly twice as good as QSM with the same step-size.
Since Algorithm~\ref{alg:main} can iterate the main loop more times than QSM, it can reduce the functional value sufficiently.

\subsection{Bounded constraint case}
Next, we evaluated Algorithm~\ref{alg:main} and QSM when they were run with the following settings:
$n:=100$; $m:=100$;
$a_0,c_0\in(0,10]$, $\tilde{a}\in(0,1]^n$, $c\in(0,10]^n$ were chosen randomly; $a:=\tilde{a}/\sum_{i=1}^na_i$;
$b_i\in[0,1)^n$, $\underline{p}_i\in[0,25\norm{b_i})$, $\overline{p}_i\in(75\norm{b_i},100\norm{b_i}]$ were chosen randomly for each $i=1,2,\ldots,m$;
and $M:=100$.
As shown in Example~\ref{example:3}, this case satisfies Assumption~\ref{assum:0}.
Therefore, the sequence generated by Algorithm~\ref{alg:main} is guaranteed to converge to some optimum.

The experimental results are shown in Table~\ref{table:bdd}.
\begin{table}[htbp]
  \centering
  \caption{Results of bounded constraint case.}\label{table:bdd}
  \begin{tabular}{l|rrr}
    \toprule
    & $k$ & $V_\text{func}$ & $V_\text{dist}$ \\
    \midrule
    Alg.~\ref{alg:main} ($v_k:=10^{-1}$) & 6254.0 & -0.00092536 & $1.21925957\times 10^{-13}$ \\
    Alg.~\ref{alg:main} ($v_k:=10^{-2}$) & 6208.9 & -0.00050503 & $5.22534774\times 10^{-3}$ \\
    Alg.~\ref{alg:main} ($v_k:=10^{-3}$) & 6276.1 & -0.00019717 & $6.42589842\times 10^{-4}$ \\
    Alg.~\ref{alg:main} ($v_k:=10^{-1}/k$) & 6293.8 & -0.00014214 & $1.28281187\times 10^{-5}$ \\
    Alg.~\ref{alg:main} ($v_k:=10^{-2}/k$) & 6245.6 & -0.00014163 & $8.56673942\times 10^{-6}$ \\
    Alg.~\ref{alg:main} ($v_k:=10^{-3}/k$) & 6294.0 & -0.00014162 & $8.33884360\times 10^{-6}$ \\
    All Results of QSM & 0.0 & --- & --- \\
    \bottomrule
  \end{tabular}
\end{table}
The existing algorithm (QSM) with all step-size rules could not compute even one iteration within the time limit, 10 seconds.
QSM required about 15 seconds to compute the first iteration.
In this case, QSM must compute the metric projection onto the intersection of two hundred halfspaces and a box, but this intersection is too complex in shape to compute quickly.
Therefore, it could not deal with this instance.
In contrast, Algorithm~\ref{alg:main} solved this instance.
In particular, Algorithm~\ref{alg:main} with $v_k:=10^{-1}$ found the solution having the best function value and belonging to the constraint set.
Therefore, it can solve problems even if their constraint sets have complex shapes.

\subsection{Optimization over generalized convex feasible sets}\label{M:3}
Finally, let us consider the case in which conflicts of constraints exist, i.e., the intersection of the constraint sets may be empty.
Even in this case, the proposed algorithm can be used if the constraints are extended to generalized convex feasible sets, such as described in Section~\ref{sec:preliminaries}.

In the two previous subsections, we computed the metric projection with the constrained smooth optimization solver provided by the SciPy library.
However, it is difficult to find the metric projection onto the set of minimizers of the functional~\eqref{dfn:gfcs} due to the discontinuity of its Hessian matrix and the complexity of the problem.
Therefore, in this subsection, we will examine only the performance of Algorithm~\ref{alg:main}.

The settings of this experiment were as follows:
$n:=100$;
$m:=100$; $a_0,c_0\in(0,10]$, $\tilde{a}\in(0,1]^n$, $c\in(0,10]^n$ were chosen randomly;
$a:=\tilde{a}/\sum_{i=1}^na_i$; $b_i\in[0,1)^n$, $\underline{p}_i,\overline{p}_i\in[0,100\norm{b_i})$ were chosen randomly for each $i=1,2,\ldots,m$;
and $M:=+\infty$.
The existence of a constraint $i\in\{1,2,\ldots,m\}$ which satisfies $\overline{p}_i<\underline{p}_i$ was guaranteed.
This implies that the intersection of at least one pair of constraints is empty.
We used \cite[Definition~(9)]{iiduka2016ejor} for constructing a firmly nonexpansive mapping whose fixed point set coincides with the constraint set. As in the previous subsection, this case also satisfies Assumption~\ref{assum:0}.

The experimental results are shown in Table~\ref{table:bdd2}.
\begin{table}[htbp]
  \centering
  \caption{Results of bounded constraint case.}\label{table:bdd2}
  \begin{tabular}{l|rrr}
    \toprule
    & $k$ & $V_\text{func}$ & $V_\text{dist}$ \\
    \midrule
    Alg.~\ref{alg:main} ($v_k:=10^{-1}$) & 4988.9 & -0.00015650 & $2.48927983\times 10^{-1}$ \\
    Alg.~\ref{alg:main} ($v_k:=10^{-2}$) & 4914.4 & -0.00012585 & $2.58656058\times 10^{-1}$ \\
    Alg.~\ref{alg:main} ($v_k:=10^{-3}$) & 4833.6 & -0.00012377 & $2.59531605\times 10^{-1}$ \\
    Alg.~\ref{alg:main} ($v_k:=10^{-1}/k$) & 4823.4 & -0.00012363 & $2.59665629\times 10^{-1}$ \\
    Alg.~\ref{alg:main} ($v_k:=10^{-2}/k$) & 4818.4 & -0.00012378 & $2.59656373\times 10^{-1}$ \\
    Alg.~\ref{alg:main} ($v_k:=10^{-3}/k$) & 4773.6 & -0.00012380 & $2.59626252\times 10^{-1}$ \\
    \bottomrule
  \end{tabular}
\end{table}
Algorithm~\ref{alg:main} solved the problem similarly for each step-size rule; the results scarcely depended on the step-size rule.
With a constant step-size $v_k=10^{-1}$ for all $k\in\nat$, it gave the best score in terms of $V_\text{func}$ and $V_\text{dist}$.
Although the time required for computing one iteration exceeded those of the two previous experiments, it approximated the solution of this complicated problem in 10 seconds.

\section{Conclusion}\label{sec:conclusion}
We proposed the novel algorithm for solving the constrained quasiconvex optimization problem even if the metric projection onto its constraint set cannot be computed easily.
We showed its convergence for constant and diminishing step-size rules.
When the step-size is constant, the limit inferiors of the functional value and the degree of approximation to the fixed point are guaranteed to be optimal and tolerate errors proportioned to the step-size.
When the step-size is diminishing, the existence of a subsequence of the generated sequence such that it converges to the solution of the problem is ensured.
Furthermore, when the problem satisfies certain conditions, the whole generated sequence converges to the solution.

The numerical experiments showed that our algorithm runs stably and lightly even if the constraint set is too complex for the existing method to run quickly.
Therefore, the proposed algorithm is useful for solving complicated constrained quasiconvex optimization problems.

\section*{Acknowledgements}
We are sincerely grateful to the editor, Steffen Rebennack, and the three anonymous reviewers for helping us improve the original manuscript.
This work was supported by the Japan Society for the Promotion of Science (JSPS KAKENHI Grant Numbers JP17J09220, JP18K11184).


\begin{thebibliography}{99}

  \bibitem[Bauschke and Combettes, 2017]{bc}
Bauschke, H.~H. and Combettes, P.~L. (2017).
\newblock {\em Convex Analysis and Monotone Operator Theory in Hilbert Spaces}.
\newblock Springer International Publishing, second edition.

\bibitem[Bertsekas et~al., 2003]{bertsekas2003}
Bertsekas, D.~P., Nedic, A., and Ozdaglar, A.~E. (2003).
\newblock {\em Convex Analysis and Optimization}.
\newblock Athena Scientific.

\bibitem[Bradley and Frey, 1974]{bradley1974}
Bradley, S.~P. and Frey, S.~C. (1974).
\newblock Fractional programming with homogeneous functions.
\newblock {\em Operations Research}, 22(2):350--357.

\bibitem[Censor and Segal, 2006]{censor2006}
Censor, Y. and Segal, A. (2006).
\newblock Algorithms for the quasiconvex feasibility problem.
\newblock {\em Journal of Computational and Applied Mathematics}, 185(1):34 --
  50.

\bibitem[Cheung and Lou, 2017]{cheung2017}
Cheung, Y. and Lou, J. (2017).
\newblock Proximal average approximated incremental gradient descent for
  composite penalty regularized empirical risk minimization.
\newblock {\em Machine Learning}, 106(4):595--622.

\bibitem[Combettes and Bondon, 1999]{combettes1999}
Combettes, P.~L. and Bondon, P. (1999).
\newblock Hard-constrained inconsistent signal feasibility problems.
\newblock {\em IEEE Transactions on Signal Processing}, 47(9):2460--2468.

\bibitem[Dyer, 1980]{dyer1980}
Dyer, M.~E. (1980).
\newblock Calculating surrogate constraints.
\newblock {\em Mathematical Programming}, 19(1):255--278.

\bibitem[Greenberg and Pierskalla, 1973]{greenberg1973}
Greenberg, H. and Pierskalla, W. (1973).
\newblock Quasi-conjugate functions and surrogate duality.
\newblock {\em Cahiers Centre {\'E}tudes Recherche Op{\'e}r}, 15:437--448.

\bibitem[Halpern, 1967]{halpern1967}
Halpern, B. (1967).
\newblock Fixed points of nonexpansive maps.
\newblock {\em Bulletin of the American Mathematical Society}, 73:957--961.

\bibitem[Hardy et~al., 1988]{hardy1988}
Hardy, G.~H., Littlewood, J.~E., and P\'olya, G. (1988).
\newblock {\em Inequalities}.
\newblock Cambridge University Press, second edition.

\bibitem[Hu et~al., 2015]{yaohua2015}
Hu, Y., Yang, X., and Sim, C.-K. (2015).
\newblock Inexact subgradient methods for quasi-convex optimization problems.
\newblock {\em European Journal of Operational Research}, 240(2):315--327.

\bibitem[Hu et~al., 2016a]{yaohua2016b}
Hu, Y., Yu, C. K.~W., and Li, C. (2016a).
\newblock Stochastic subgradient method for quasi-convex optimization problems.
\newblock {\em Journal of Nonlinear and Convex Analysis}, 17(4):711--724.

\bibitem[Hu et~al., 2016b]{yaohua2016}
Hu, Y., Yu, C. K.~W., Li, C., and Yang, X. (2016b).
\newblock Conditional subgradient methods for constrained quasi-convex
  optimization problems.
\newblock {\em Journal of Nonlinear and Convex Analysis}, 17(10):2143--2158.

\bibitem[Iiduka, 2013]{iiduka2013}
Iiduka, H. (2013).
\newblock Fixed point optimization algorithms for distributed optimization in
  networked systems.
\newblock {\em SIAM Journal on Optimization}, 23(1):1--26.

\bibitem[Iiduka, 2015a]{Iiduka2015b}
Iiduka, H. (2015a).
\newblock Acceleration method for convex optimization over the fixed point set
  of a nonexpansive mapping.
\newblock {\em Mathematical Programming}, 149(1):131--165.

\bibitem[Iiduka, 2015b]{iiduka2015}
Iiduka, H. (2015b).
\newblock Parallel computing subgradient method for nonsmooth convex
  optimization over the intersection of fixed point sets of nonexpansive
  mappings.
\newblock {\em Fixed Point Theory and Applications}, 2015(1):72.

\bibitem[Iiduka, 2016a]{iiduka2016}
Iiduka, H. (2016a).
\newblock Convergence analysis of iterative methods for nonsmooth convex
  optimization over fixed point sets of quasi-nonexpansive mappings.
\newblock {\em Mathematical Programming}, 159(1):509--538.

\bibitem[Iiduka, 2016b]{iiduka2016oms}
Iiduka, H. (2016b).
\newblock Incremental subgradient method for nonsmooth convex optimization with
  fixed point constraints.
\newblock {\em Optimization Methods and Software}, 31(5):931--951.

\bibitem[Iiduka, 2016c]{iiduka2016ejor}
Iiduka, H. (2016c).
\newblock Proximal point algorithms for nonsmooth convex optimization with
  fixed point constraints.
\newblock {\em European Journal of Operational Research}, 253(2):503--513.

\bibitem[Iiduka, 2017]{iiduka2017}
Iiduka, H. (2017).
\newblock Almost sure convergence of random projected proximal and subgradient
  algorithms for distributed nonsmooth convex optimization.
\newblock {\em Optimization}, 66(1):35--59.

\bibitem[Jones et~al., 01  ]{scipy}
Jones, E., Oliphant, T., Peterson, P., et~al. (2001--).
\newblock {SciPy}: Open source scientific tools for {Python}.
\newblock [Online; accessed Aug. 16, 2018].

\bibitem[Kiwiel, 2001]{kiwiel2001}
Kiwiel, K.~C. (2001).
\newblock Convergence and efficiency of subgradient methods for quasiconvex
  minimization.
\newblock {\em Mathematical Programming}, 90(1):1--25.

\bibitem[Konnov, 2003]{konnov2003}
Konnov, I.~V. (2003).
\newblock On convergence properties of a subgradient method.
\newblock {\em Optimization Methods and Software}, 18(1):53--62.

\bibitem[Krasnosel'ski\u\i, 1955]{krasnoselskii1955}
Krasnosel'ski\u\i, M.~A. (1955).
\newblock Two remarks on the method of successive approximations.
\newblock {\em Uspekhi Matematicheskikh Nauk}, 10(1(63)):123--127.

\bibitem[Larsson et~al., 1996]{larsson1996}
Larsson, T., Patriksson, M., and Str{\"o}mberg, A.-B. (1996).
\newblock Conditional subgradient optimization---theory and applications.
\newblock {\em European Journal of Operational Research}, 88(2):382--403.

\bibitem[Lee et~al., 2013]{lee2013}
Lee, C.-Y., Johnson, A.~L., Moreno-Centeno, E., and Kuosmanen, T. (2013).
\newblock A more efficient algorithm for convex nonparametric least squares.
\newblock {\em European Journal of Operational Research}, 227(2):391--400.

\bibitem[Mann, 1953]{mann1953}
Mann, W.~R. (1953).
\newblock Mean value methods in iteration.
\newblock {\em Proceedings of the American Mathematical Society}, 4:506--510.

\bibitem[Nocedal and Wright, 2006]{nocedal2006}
Nocedal, J. and Wright, S.~J. (2006).
\newblock {\em Numerical Optimization}.
\newblock Springer-Verlag New York, second edition.

\bibitem[Oliphant, 2006]{numpy}
Oliphant, T. (2006).
\newblock {\em A guide to {NumPy}}.
\newblock Trelgol Publishing, USA.

\bibitem[Opial, 1967]{opial1967}
Opial, Z. (1967).
\newblock Weak convergence of the sequence of successive approximations for
  nonexpansive mappings.
\newblock {\em Bulletin of the American Mathematical Society}, 73:591--597.

\bibitem[Penot, 1998]{penot1998}
Penot, J.-P. (1998).
\newblock Are generalized derivatives useful for generalized convex functions?
\newblock In Crouzeix, J.-P., Martinez-Legaz, J.-E., and Volle, M., editors,
  {\em Generalized Convexity, Generalized Monotonicity: Recent Results,
  Nonconvex Optimization and Its Applications}, volume~27, pages 3--59. Kluwer
  Academic Publishers.

\bibitem[Plastria, 1985]{plastria1985}
Plastria, F. (1985).
\newblock Lower subdifferentiable functions and their minimization by cutting
  planes.
\newblock {\em Journal of Optimization Theory and Applications}, 46(1):37--53.

\bibitem[Rockafellar and Wets, 2009]{rockafellar}
Rockafellar, R.~T. and Wets, R. J.-B. (2009).
\newblock {\em Variational analysis}, volume 317.
\newblock Springer Science \& Business Media.

\bibitem[Sakurai and Iiduka, 2014]{sakurai2014}
Sakurai, K. and Iiduka, H. (2014).
\newblock Acceleration of the {H}alpern algorithm to search for a fixed point
  of a nonexpansive mapping.
\newblock {\em Fixed Point Theory and Applications}, 2014(1):202.

\bibitem[Stancu-Minasian, 1997]{stancu1997}
Stancu-Minasian, I.~M. (1997).
\newblock {\em Fractional Programming: Theory, Methods and Applications}.
\newblock Kluwer Academic Publishers.

\bibitem[Takahashi, 2009]{wataru}
Takahashi, W. (2009).
\newblock {\em Introduction to Nonlinear and Convex Analysis}.
\newblock Yokohama Publishers.

\bibitem[Yamada, 2001]{yamada2001}
Yamada, I. (2001).
\newblock The hybrid steepest descent method for the variational inequality
  problem over the intersection of fixed point sets of nonexpansive mappings.
\newblock In Butnariu, D., Censor, Y., and Reich, S., editors, {\em Inherently
  Parallel Algorithms in Feasibility and Optimization and their Applications},
  volume~8 of {\em Studies in Computational Mathematics}, pages 473--504.
  Elsevier.

\bibitem[Zhang, 2010]{zhang2010}
Zhang, T. (2010).
\newblock Analysis of multi-stage convex relaxation for sparse regularization.
\newblock {\em Journal of Machine Learning Research}, 11:1081--1107.

\end{thebibliography}
\nocite{hardy1988,nocedal2006,dyer1980}

\ifextended
\appendix\newpage
\section{Sufficient condition for ensuring Assumption~(A\ref{assum:bounded})}\label{appendix:loft}
Here, we present a sufficient condition for ensuring the boundedness of the sequence generated by Algorithm~\ref{alg:main} without supposing the boundedness of the set $D$.
Firstly, we define coerciveness, which is used as a condition for the objective functional to ensure the boundedness of the generated sequence.
\begin{dfn}[{\cite[Definition~11.11, Proposition~11.12]{bc}}]
  Let $H$ be a real Hilbert space, and let $f$ be a functional on $H$.
  We call $f$ is \emph{coercive} if
  \begin{align*}
    \lim_{\norm{x}\to\infty}f(x)=\infty.
  \end{align*}
  Furthermore, let $\lev_{\le \alpha}f$ denote the trench of $f$, i.e., $\lev_{\le \alpha}f:=\{x\in\Hsp:f(x)\le\alpha\}$, for any real number $\alpha\in\real$.
  Then, coerciveness is equivalent to the boundedness of all trenches of $f$.
\end{dfn}

Let us define the diameter of a set $C$ as $\diam(C):=\sup\{\norm{u-v}:u,v\in C\}$ for the proof described later.
The following proposition shows the necessary condition for ensuring the boundedness of the sequence generated by Algorithm~\ref{alg:main}.
\begin{prop}\label{prop:seqbdd}
  Let $\{x_k\}\subset\Hsp$ be a sequence generated by Algorithm~\ref{alg:main}.
  Suppose that Assumption~\ref{assum:basic} holds and there exists a number $k_0\in\nat$ such that $v_k<1$ for all $k\ge k_0$.
  Assume that $f$ is coercive.
  If one of the following holds,
  \begin{enumerate}[(i)]
    \item Assumption~(A\ref{assum:holder}) holds, \label{case:g:c1}
    \item the whole space $\Hsp$ is an $N$-dimensional Euclidean space $\real^n$, \label{case:g:c2}
  \end{enumerate}
  then Assumption~(A\ref{assum:bounded}) is satisfied.
\end{prop}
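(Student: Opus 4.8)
The plan is to show that the generated sequence $\{x_k\}$ is confined inside some trench $\lev_{\le\alpha}f$ of the objective functional, which is bounded by coerciveness. The natural quantity to track is the distance $\norm{x_k-x^\star}$ from a fixed minimizer $x^\star\in X^\star$ (nonempty by Assumption~(A\ref{a3:nonempty})). From the two fundamental estimates already established — Lemma~\ref{lem:fun} under case~\eqref{case:g:c1}, and the fact that in finite dimensions one still controls $\norm{x_{k+1}-x^\star}$ via a projection/nonexpansivity argument in case~\eqref{case:g:c2} — I would argue that whenever $f(x_k)>f_\star$ and $k\ge k_0$ (so $v_k<1$, hence $v_k^2<v_k$), the increment $\norm{x_{k+1}-x^\star}^2-\norm{x_k-x^\star}^2$ is bounded above by something like $(1-\alpha_k)v_k\bigl(v_k-2((f(x_k)-f_\star)/L)^{1/\beta}\bigr)$, which is at most $v_k<1$ unconditionally, and is actually negative once $f(x_k)-f_\star$ exceeds a fixed positive threshold $\tau:=L(1/2)^\beta$ (using $v_k<1$). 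The key consequence: the distance sequence can increase by at most $1$ per step, and it strictly decreases whenever $f(x_k)\ge f_\star+\tau$.

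First I would dispose of the finitely many early indices $k<k_0$: the sequence takes only finitely many values there, so they contribute a finite bound. Then, for $k\ge k_0$, I would set up a "largest recent descent index" argument: define $R:=\norm{x_{k_0}-x^\star}$ and show by induction that $\norm{x_k-x^\star}\le R+1$ for all $k\ge k_0$. Indeed, if $\norm{x_k-x^\star}\le R$ then $\norm{x_{k+1}-x^\star}\le\sqrt{R^2+1}\le R+1$; and if $R<\norm{x_k-x^\star}\le R+1$, then I claim $f(x_k)$ cannot be large — more precisely, since $x_k$ has wandered away from $x^\star$, either $f(x_k)<f_\star+\tau$ already gives the bound directly, or $f(x_k)\ge f_\star+\tau$ forces a strict decrease $\norm{x_{k+1}-x^\star}<\norm{x_k-x^\star}\le R+1$. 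Combining the cases, $\norm{x_k-x^\star}\le R+1$ is propagated, so $\{x_k\}$ is bounded; a cleaner way to phrase the same idea is that $\{x_k\}$ cannot escape the ball of radius $R+1$ about $x^\star$ because every step that would push it past radius $R$ from a point of high objective value is a contracting step.

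The genuine obstacle is case~\eqref{case:g:c2}: without Assumption~(A\ref{assum:holder}) I cannot invoke Lemma~\ref{lem:fun}, so the Fej\'er-type inequality controlling $\norm{x_{k+1}-x^\star}$ in terms of the objective decrease is not available. Here I would instead use a softer argument peculiar to finite dimensions: the subgradient $g_k\in\partial^\star f(x_k)\cap\Sphere$ is a unit normal to the slice $\lev_{<f(x_k)}f$, and since $x^\star$ lies in that slice (when $f(x_k)>f_\star$) we have $\ip{g_k,x^\star-x_k}\le 0$, hence $\norm{x_k-v_kg_k-x^\star}^2=\norm{x_k-x^\star}^2-2v_k\ip{g_k,x_k-x^\star}+v_k^2\le\norm{x_k-x^\star}^2+v_k^2$; applying firm nonexpansivity of $T$ (Assumption~(A\ref{a2:fixedpoint})) with $x^\star\in\Fix(T)$, the convex combination with coefficient $\alpha_k$, and nonexpansivity of $P_D$ with $x^\star\in D$, yields $\norm{x_{k+1}-x^\star}^2\le\norm{x_k-x^\star}^2+v_k^2\le\norm{x_k-x^\star}^2+v_k$ for $k\ge k_0$. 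This already gives the "increase by at most $v_k<1$" half; for the strict-decrease half one needs the term $-2v_k\ip{g_k,x_k-x^\star}$ to dominate $v_k^2$, i.e. one needs a uniform lower bound on $\ip{g_k,x_k-x^\star}$ when $f(x_k)$ is bounded away from $f_\star$ and $x_k$ is bounded away from $x^\star$. In finite dimensions this follows from a compactness/Proposition~\ref{prop:konnov}-free geometric argument on the slice, but making it quantitative is the delicate point; alternatively, one shows directly that $\{f(x_k)\}$ is bounded (hence $\{x_k\}$ lies in a bounded trench) by a contradiction argument exploiting $\sum v_k$-type growth together with the above monotone-distance structure. I expect this quantification of the separation to be where the real work lies.
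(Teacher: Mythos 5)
Your case~(ii) is a genuine gap, and the difficulty you flag as ``where the real work lies'' is in fact dissolved by choosing a better test point in the subgradient inequality, which is exactly what the paper does. In $\real^N$ the closed ball $x^\star+\Ball$ is compact, so $\delta:=\max_{z\in x^\star+\Ball}f(z)$ exists by continuity, and coerciveness makes the trench $\lev_{\le\delta}f$ bounded; call $\rho$ its radius about $x^\star$. For $k\ge k_0$ split on whether $x_k\in\lev_{\le\delta}f$. If yes, nonexpansivity of $P_D$, $T$ and the convex combination give $\norm{x_{k+1}-x^\star}\le\norm{x_k-x^\star}+(1-\alpha_k)v_k\le\rho+1$. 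If no, then $f(x_k)>\delta$, so \emph{both} $x^\star$ and $x^\star+v_kg_k$ (here $v_k<1$ is used) lie in $\lev_{<f(x_k)}f$; the definition of $\partial^\star f(x_k)$ applied at these two points gives $\ip{g_k,x^\star-x_k}\le 0$ and $\ip{g_k,(x^\star+v_kg_k)-x_k}\le 0$, and writing $\norm{x_k-v_kg_k-x^\star}^2=\norm{x_k-x^\star}^2-v_k\ip{g_k,x_k-x^\star}-v_k\ip{g_k,x_k-(x^\star+v_kg_k)}$ shows $\norm{x_k-v_kg_k-x^\star}\le\norm{x_k-x^\star}$, hence $\norm{x_{k+1}-x^\star}\le\norm{x_k-x^\star}$. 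So no uniform, quantitative lower bound on $\ip{g_k,x_k-x^\star}$ is needed (the second inequality already yields $\ip{g_k,x_k-x^\star}\ge v_k$, which exactly cancels $v_k^2$), and no $\sum_k v_k$-divergence argument is available anyway, since the proposition only assumes $v_k<1$ eventually. Induction then gives $\norm{x_k-x^\star}\le\max\{\norm{x_1-x^\star},\dots,\norm{x_{k_0}-x^\star},\rho+1\}$. The paper runs the identical computation in case~(i) as well, using the H\"older condition (extended to all of $\Hsp$ via a contradiction step with $f(x_{k_i})\to\infty$) only to bound $f$ on $x^\star+\Ball$ by $f_\star+L$, so neither Lemma~\ref{lem:fun} nor Proposition~\ref{prop:konnov} is actually needed.

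Your case~(i) is essentially sound but the induction as stated is broken: from a point with $R<\norm{x_k-x^\star}\le R+1$ and $f(x_k)<f_\star+\tau$ you only get $\norm{x_{k+1}-x^\star}\le\norm{x_k-x^\star}+(1-\alpha_k)v_k$, which can exceed $R+1$; ``gives the bound directly'' must go through coerciveness, and then the bound is governed by the radius $\rho_\tau$ of the trench $\lev_{\le f_\star+\tau}f$ about $x^\star$, not by $R$. The fix is to prove $\norm{x_k-x^\star}\le\max\{\norm{x_{k_0}-x^\star},\rho_\tau+1\}$ for $k\ge k_0$: if $f(x_k)<f_\star+\tau$ then $\norm{x_k-x^\star}\le\rho_\tau$ and the step adds at most $1$; if $f(x_k)\ge f_\star+\tau$ then Lemma~\ref{lem:fun} with $v_k<1$ gives non-increase. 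With that correction, your case~(i) is a legitimate variant of the paper's argument (threshold $L(1/2)^\beta$ via Lemma~\ref{lem:fun} instead of the paper's $f_\star+L$ via the ball trick), but the proposal as a whole does not prove the proposition because case~(ii) is left open.
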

\begin{proof}
  Let us prove this proposition by dividing it into case~(\ref{case:g:c1}) and (\ref{case:g:c2}).
  First, suppose that case~(\ref{case:g:c1}) holds.
  We will proceed by way of contradiction and suppose that the sequence $\{x_k\}$ is unbounded.
  Then, there exists a subsequence $\{x_{k_i}\}$ of the sequence $\{x_k\}$ such that $\lim_{i\to\infty}\norm{x_{k_i}}=\infty$.
  The coercivity of $f$ implies that $\lim_{i\to\infty}f(x_{k_i})=\infty$.
  Fix $x^\star\in X^\star$ arbitrarily.
  Assumption~(A\ref{assum:holder}) guarantees that
  \begin{align*}
    \abs{f(z)-f_\star}
    \le L\norm{z-x^\star}^\beta
  \end{align*}
  for all $z\in \cl(\lev_{<f(x_i)}f$ and for any $i\in\nat$.
  Since $\lim_{i\to\infty}f(x_{k_i})=\infty$, we have
  \begin{align*}
    \abs{f(z)-f_\star}
    \le L\norm{z-x^\star}^\beta
  \end{align*}
  for all $z\in\Hsp$.
  Considering the point $z$ appearing in the above inequality to be limited in the set $x^\star+\Ball$, we obtain
  \begin{align*}
    f(z)
    & \le f_\star + L\norm{z-x^\star}^\beta\\
    & \le f_\star + L
  \end{align*}
  for all $z\in x^\star+\Ball$.
  Set $\delta$ to be the right side of the above inequality, i.e., $\delta:=f_\star+L$.
  Then, the set $x^\star + \Ball$ is obviously a subset of the bounded trench $\lev_{\le \delta}f$.

  From the assumption of this proposition, there exists a number $k_0\in\nat$ such that $v_k<1$ for all $k\ge k_0$.
  For each $k\ge k_0$, let us consider the two separate cases: the case where the point $x_k$ belongs to the trench $\lev_{\le \delta}f$, and its negation.
  First, let us consider the positive case; i.e., $x_k\in\lev_{\le \delta}f$ for $k\ge k_0$.
  The nonexpansivity of $P_D$ and $T$ and the fact that $x^\star\in\Fix(P_D)\cap\Fix(T)$ ensure that
  \begin{align*}
    \norm{x_{k+1}-x^\star}
    &=\norm{P_D(\alpha_k x_k + (1-\alpha_k)T(x_k-v_kg_k))-x^\star} \\
    &\le\norm{\alpha_k x_k+(1-\alpha_k)T(x_k-v_kg_k)-x^\star} \\
    &\le\alpha_k\norm{x_k-x^\star}+(1-\alpha_k)\norm{T(x_k-v_kg_k)-x^\star} \\
    &\le\alpha_k\norm{x_k-x^\star}+(1-\alpha_k)\norm{x_k-v_kg_k-x^\star} \\
    &\le\norm{x_k-x^\star}+(1-\alpha_k)v_k
  \end{align*}
  for any $k\ge k_0$ such that $x_k\in\lev_{\le \delta}f$.
  Here, both $x_k$ and $x^\star$ belong to the bounded trench $\lev_{\le \delta}f$, and both $1-\alpha$ and $v_k$ are less than or equal to $1$.
  Hence,
  \begin{align*}
    \norm{x_{k+1}-x^\star}
    &\le\diam(\lev_{\le\delta}f)+1<\infty
  \end{align*}
  holds for any $k\ge k_0$ such that $x_k\in\lev_{\le \delta}f$.
  Next, let us consider the negative case; i.e., $x_k\not\in\lev_{\le \delta}f$ for $k\ge k_0$.
  The nonexpansivity of $P_D$ and $T$ and the fact that $x^\star\in\Fix(P_D)\cap\Fix(T)$ ensure that
  \begin{align*}
    \norm{x_{k+1}-x^\star}
    &=\norm{P_D(\alpha_k x_k + (1-\alpha_k)T(x_k-v_kg_k))-x^\star} \\
    &\le\norm{\alpha_k x_k+(1-\alpha_k)T(x_k-v_kg_k)-x^\star} \\
    &\le\alpha_k\norm{x_k-x^\star}+(1-\alpha_k)\norm{T(x_k-v_kg_k)-x^\star} \\
    &\le\alpha_k\norm{x_k-x^\star}+(1-\alpha_k)\norm{x_k-v_kg_k-x^\star}
  \end{align*}
  for any $k\ge k_0$ such that $x_k\not\in\lev_{\le \delta}f$.
  Let us consider the right term of the right side of the above inequality.
  Its squared value is bounded from above as follows:
  \begin{align*}
    \norm{x_k-v_kg_k-x^\star}^2
    &=\norm{x_k-x^\star}^2-2v_k\ip{g_k,x_k-x^\star}+v_k\ip{g_k,v_kg_k} \\
    &=\norm{x_k-x^\star}^2-v_k\ip{g_k,x_k-x^\star}-v_k\ip{g_k,x_k-(x^\star+v_kg_k)}
  \end{align*}
  for any $k\ge k_0$ such that $x_k\not\in\lev_{\le \delta}f$.
  Here, $x_k\not\in\lev_{\le \delta}f$ implies that $f_\star\le f_\star+L=\delta<f(x_k)$ for $f\ge k_0$.
  Therefore, we have $x^\star\in\lev_{\le f(x_k)}f$ and $x^\star+v_kg_k\in x^\star+\Ball\subset\lev_{\le f(x_k)}f$ for any $k\ge k_0$ such that $x_k\not\in\lev_{\le \delta}f$.
  Since the definition of $g_k\in\partial^\star f(x_k)\cap\Sphere$ together with the preceding discussion implies that $\ip{g_k,x_k-x^\star}\ge 0$ and $\ip{g_k,x_k-(x^\star+v_kg_k)}\ge 0$, we have
  \begin{align*}
    \norm{x_k-v_kg_k-x^\star}
    &\le\norm{x_k-x^\star}
  \end{align*}
  for any $k\ge k_0$ such that $x_k\not\in\lev_{\le \delta}f$.
  Hence, we have
  \begin{align*}
    \norm{x_{k+1}-x^\star}
    \le \norm{x_k-x^\star}
  \end{align*}
  for any $k\ge k_0$ such that $x_k\not\in\lev_{\le \delta}f$.
  From the results for the both cases where $x_k\in\lev_{\le \delta}f$ or not, we have
  \begin{align*}
    \norm{x_k-x^\star}
    &\le\max\{\norm{x_1-x^\star}, \norm{x_2-x^\star},\ldots,\norm{x_{k_0}-x^\star}, \diam(\lev_{\le\delta}f)+1\} \\
    &<\infty
  \end{align*}
  for all $k\in\nat$.
  However, this contradicts the assumption that the sequence $\{x_k\}$ is unbounded.
  Therefore, we arrive at a contradiction and the boundedness of the sequence $\{x_k\}$ has been proved under this case.

  Now let us suppose that case~(\ref{case:g:c2}) holds.
  In an $N$-dimensional Euclidean space $\real^N$, every closed, bounded set is compact \cite[Problem~3.1.6]{wataru}.
  Therefore, the nonempty, compact set $x^\star+\Ball$ contains a point $\bar{x}$ such that $f(\bar{x})=\max_{x\in x^\star+\Ball}f(x)$ \cite[Theorem~2.5.7]{wataru}.
  Set $\delta:=f(\bar{x})\ge f_\star$.
  Then, the set $x^\star + \Ball$ is a subset of the bounded trench $\lev_{\le \delta}f=\lev_{\le f(\bar{x})}f$.

  From the assumption of this proposition, there exists a number $k_0\in\nat$ such that $v_k<1$ for all $k\ge k_0$.
  For each $k\ge k_0$, let us consider the two separate cases: the case where the point $x_k$ belongs to the trench $\lev_{\le \delta}f$, and its negation.
  First, let us consider the positive case; i.e., $x_k\in\lev_{\le \delta}f$ for $k\ge k_0$.
  The nonexpansivity of $P_D$ and $T$ and the fact that $x^\star\in\Fix(P_D)\cap\Fix(T)$ ensure that
  \begin{align*}
    \norm{x_{k+1}-x^\star}
    &=\norm{P_D(\alpha_k x_k + (1-\alpha_k)T(x_k-v_kg_k))-x^\star} \\
    &\le\norm{\alpha_k x_k+(1-\alpha_k)T(x_k-v_kg_k)-x^\star} \\
    &\le\alpha_k\norm{x_k-x^\star}+(1-\alpha_k)\norm{T(x_k-v_kg_k)-x^\star} \\
    &\le\alpha_k\norm{x_k-x^\star}+(1-\alpha_k)\norm{x_k-v_kg_k-x^\star} \\
    &\le\norm{x_k-x^\star}+(1-\alpha_k)v_k
  \end{align*}
  for any $k\ge k_0$ such that $x_k\in\lev_{\le \delta}f$.
  Here, both $x_k$ and $x^\star$ belong to the bounded trench $\lev_{\le \delta}f$, and both $1-\alpha$ and $v_k$ are less than or equal to $1$.
  Hence,
  \begin{align*}
    \norm{x_{k+1}-x^\star}
    &\le\diam(\lev_{\le\delta}f)+1<\infty
  \end{align*}
  holds for any $k\ge k_0$ such that $x_k\in\lev_{\le \delta}f$.
  Next, let us consider the negative case; i.e., $x_k\not\in\lev_{\le \delta}f$ for $k\ge k_0$.
  The nonexpansivity of $P_D$ and $T$ and the fact that $x^\star\in\Fix(P_D)\cap\Fix(T)$ ensure that
  \begin{align*}
    \norm{x_{k+1}-x^\star}
    &=\norm{P_D(\alpha_k x_k + (1-\alpha_k)T(x_k-v_kg_k))-x^\star} \\
    &\le\norm{\alpha_k x_k+(1-\alpha_k)T(x_k-v_kg_k)-x^\star} \\
    &\le\alpha_k\norm{x_k-x^\star}+(1-\alpha_k)\norm{T(x_k-v_kg_k)-x^\star} \\
    &\le\alpha_k\norm{x_k-x^\star}+(1-\alpha_k)\norm{x_k-v_kg_k-x^\star}
  \end{align*}
  for any $k\ge k_0$ such that $x_k\not\in\lev_{\le \delta}f$.
  Let us consider the right term of the right side of the above inequality.
  Its squared value is bounded from above as follows:
  \begin{align*}
    \norm{x_k-v_kg_k-x^\star}^2
    &=\norm{x_k-x^\star}^2-2v_k\ip{g_k,x_k-x^\star}+v_k\ip{g_k,v_kg_k} \\
    &=\norm{x_k-x^\star}^2-v_k\ip{g_k,x_k-x^\star}-v_k\ip{g_k,x_k-(x^\star+v_kg_k)}
  \end{align*}
  for any $k\ge k_0$ such that $x_k\not\in\lev_{\le \delta}f$.
  Here, $x_k\not\in\lev_{\le \delta}f$ implies that $f_\star\le f_\star+L=\delta<f(x_k)$ for $f\ge k_0$.
  Therefore, we have $x^\star\in\lev_{\le f(x_k)}f$ and $x^\star+v_kg_k\in x^\star+\Ball\subset\lev_{\le f(x_k)}f$ for any $k\ge k_0$ such that $x_k\not\in\lev_{\le \delta}f$.
  Since the definition of $g_k\in\partial^\star f(x_k)\cap\Sphere$ together with the preceding discussion implies that $\ip{g_k,x_k-x^\star}\ge 0$ and $\ip{g_k,x_k-(x^\star+v_kg_k)}\ge 0$, we have
  \begin{align*}
    \norm{x_k-v_kg_k-x^\star}
    &\le\norm{x_k-x^\star}
  \end{align*}
  for any $k\ge k_0$ such that $x_k\not\in\lev_{\le \delta}f$.
  Hence, we have
  \begin{align*}
    \norm{x_{k+1}-x^\star}
    \le \norm{x_k-x^\star}
  \end{align*}
  for any $k\ge k_0$ such that $x_k\not\in\lev_{\le \delta}f$.
  From the results for the both cases where $x_k\in\lev_{\le \delta}f$ or not, we have
  \begin{align*}
    \norm{x_k-x^\star}
    &\le\max\{\norm{x_1-x^\star}, \norm{x_2-x^\star},\ldots,\norm{x_{k_0}-x^\star}, \diam(\lev_{\le\delta}f)+1\} \\
    &<\infty
  \end{align*}
  for all $k\in\nat$.
  This implies that the sequence $\{x_k\}$ is bounded, and this completes the proof.
\end{proof}
The assumption on the step-size, i.e., the existence of a number $k_0$ such that $v_k<1$ for all $k\ge k_0$, is satisfied whenever we adopt a diminishing step-size rule since the step-size sequence decreasingly converges to zero.
Furthermore, it is also satisfied when we adopt a constant step-size rule with a small enough constant step-size $v<1$.
Overall, the coerciveness of the objective functional $f$ (and the smallness of the step-sizes) is a sufficient condition for ensuring the boundedness of the generated sequence in the convergence analyses of constant or diminishing step-size rules.

\section{Proofs of Proposition~\ref{prop:konnov} and some examples}
\begin{proof}[Proof of Proposition~\ref{prop:konnov}]
Fix $g\in\partial^\star f(x)\cap\Sphere$ arbitrarily.
The continuity and quasiconvexity of $f$ imply its level set $\lev_{<f(x)}f$ is open and convex.
This $\lev_{<f(x)}f$ is not an empty set, since it has at least the point $x^\star$.
The continuity of $f$ also ensures $\bd(\lev_{<f(x)}f)\neq\emptyset$.

Set $r:=\inf\{\norm{x^\star-u}:u\in\bd(\lev_{<f(x)}f)\}$.
Then, there exists a sequence $\{u_k\}\subset\bd(\lev_{<f(x)}f)$ such that $\norm{x^\star-u_k}\le r+1/k$ for all $k\in\nat$.
The openness of $\lev_{<f(x)}f$ implies that it is a distinct set from its boundary, i.e., $f(x)\le f(u)$ for any $u\in\bd(\lev_{<f(x)}f)$.
Hence,
\begin{align*}
f(x)-f_\star
&\le f(u_k)-f_\star\\
&\le L\norm{u_k-x^\star}^\beta\\
&<L\left(r+\frac{1}{k}\right)^\beta\quad(k\in\nat).
\end{align*}
It follows that
\begin{align}
f(x)-f_\star
\le Lr^\beta.\label{eqn:1}
\end{align}

From the definition of $r$, the open ball with center $x^\star$ and radius $r$ is contained inside $\lev_{<f(x)}f$.
Therefore, $x^\star+(1-1/k)rg\in\lev_{<f(x)}f$ holds for any $k\in\nat$, and we have
\begin{align*}
\left(1-\frac{1}{k}\right)r-\ip{g,x-x^\star}
&=\left(1-\frac{1}{k}\right)r\norm{g}^2-\ip{g,x-x^\star}\\
&=\ip{g,x^\star+\left(1-\frac{1}{k}\right)rg-x}\le 0\quad(k\in\nat).
\end{align*}
This implies that $r\le\ip{g,x-x^\star}$.
Applying this inequality to inequality~\eqref{eqn:1} gives $f(x)-f_\star\le L\ip{g,x-x^\star}^\beta$.
This completes the proof.
\end{proof}

\begin{proof}[Proof of Example~\ref{example:2}]
The fixed point set $\Fix(T)=\Fix(\Id)$ is obviously the whole space $\Hsp$.
Therefore, the minimum value of $f$ is $0$ and its minimizer is only the origin.
For any $z\in\Hsp$, we have
\begin{align*}
\abs{f(z)-f(x^\star)}
&=\min\{\norm{z},\alpha\}\\
&\le\norm{z-x^\star}.
\end{align*}
This implies that $f$ satisfies the H\"older condition with degree $1$ at its minimizer $x^\star$ on the whole space $\Hsp$.
Expanding $\norm{x_{k+1}}^2=\norm{x_k-(v_k/2)g_k}^2$ and using Proposition~\ref{prop:konnov} with the above result, we have
\begin{align*}
\norm{x_{k+1}}^2
&=\norm{x_k}^2-v_k\ip{g_k,x_k-x^\star}+\left(\frac{v_k}{2}\right)^2\\
&\le\norm{x_k}^2-v_k\min\{\norm{x_k},\alpha\}+\left(\frac{v_k}{2}\right)^2\quad(k\in\nat).
\end{align*}
When the index $k\in\nat$ satisfies $\norm{x_k}\le\alpha$,
\begin{align*}
\norm{x_{k+1}}^2
&\le\norm{x_k}^2-v_k\norm{x_k}+\left(\frac{v_k}{2}\right)^2\\
&\le\frac{5}{4}\alpha^2
\end{align*}
holds.
The nonnegativeness of both sides of the above inequality ensures that $\norm{x_{k+1}}$ is bounded from above by $\sqrt{5}\alpha/2$ for any $k\in\nat$ satisfying $\norm{x_k}\le\alpha$.
In the opposite case, i.e. if the index $k\in\nat$ satisfies $\alpha<\norm{x_k}$,
\begin{align*}
\norm{x_{k+1}}^2
&\le\norm{x_k}^2-v_k\alpha+\left(\frac{v_k}{2}\right)^2\\
&=\norm{x_k}^2-v_k\left(\alpha-\frac{v_k}{4}\right)
\end{align*}
holds.
Therefore, we have $\norm{x_{k+1}}\le\norm{x_k}$ for any $k\in\nat$ satisfying $\alpha<\norm{x_k}$.
Combining the conclusions of both cases, we can see that the sequence $\{\norm{x_k}\}$ is bounded from above by $\max\{\norm{x_1},\sqrt{5}\alpha/2\}$.
This completes the proof.
\end{proof}

\begin{proof}[Proof of Example~\ref{example:3}]
The generated sequence $\{x_k\}$ is obviously bounded, since it is contained inside $[0,M]^n$.
Fix $x^\star\in X^\star$ arbitrarily.
The assumptions of this example imply that the feasible set $\Fix(T)\cap D$ has a point $u\in(0,\infty)^n$.
Hence, the functional value of this point $f(u)$ is strictly less than $f(v)=0$ for any $v\in\bd([0,\infty)^n)$.
This means that $X^\star\subset(0,\infty)^n$ holds; that is, any optimum $x^\star\in X^\star$ belongs to the interior of $[0,\infty)^n$.
Therefore, there exists some $\delta>0$ such that the closed ball with center $x^\star$ and radius $2\delta$ is contained inside $[0,\infty)^n$.
Let us discuss the satisfiability of the H\"older condition on the set $[\delta,\infty)^n$ and on its complement $\overline{[\delta,\infty)^n}$ separately (see Figure~\ref{fig:ballfig} for the relation between the point $x^\star$ and these two disjoint sets).
\begin{figure}[htbp]
  \centering
  \includegraphics[width=5truecm]{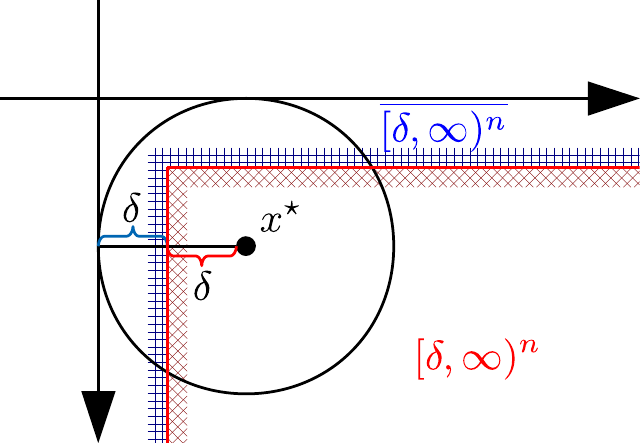}
  \caption{Relation between the point $x^\star$, the set $\overline{[\delta,\infty)^n}$, and its complement}\label{fig:ballfig}
\end{figure}
First, fix $z\in[\delta,\infty)^n$ arbitrarily.
Denoting the element-wise (Hadamard) division of $x$ and $y$ by $x\oslash y$, the gradient of the functional $f$ can be written as
\begin{align*}
\nabla f(x)=\frac{-a_0\prod_{j=1}^nx_j^{a_j}}{(\ip{c,x}+c_0)^2}c+\frac{-a_0\prod_{j=1}^nx_j^{a_j}}{\ip{c,x}+c_0}(a\oslash x)\quad(x\in[\delta,\infty)^n).
\end{align*}
Letting $\tilde{c}$ be a vector whose elements are each the minimum of $c$, an upper bound of the norms of the gradients can be evaluated by using the theorem of arithmetic and geometric means \cite[Inequality~(2.5.2)]{hardy1988}, as follows:
\begin{align*}
\norm{\nabla f(x)}
&\le\frac{a_0\prod_{j=1}^nx_j^{a_j}}{(\ip{c,x}+c_0)^2}\norm{c}+\frac{a_0\prod_{j=1}^nx_j^{a_j}}{\ip{c,x}+c_0}\norm{a\oslash x}\\
&\le\frac{a_0}{\norm{\tilde{c}}}\left(\frac{1}{\ip{c,x}+c_0}\norm{c}+\norm{a\oslash x}\right)\\
&\le\frac{a_0}{\norm{\tilde{c}}}\left(\frac{1}{\delta\norm{\tilde{c}}+c_0}\norm{c}+\frac{\norm{a}}{\delta}\right)<\infty\quad(x\in[\delta,\infty)^n).
\end{align*}
This implies that the image $(\norm{\nabla f(\cdot)})([\delta,\infty)^n)$ is bounded.
The mean value theorem \cite[Inequality~(A.55)]{nocedal2006} ensures the existence of some $\alpha\in(0,1)$ such that
\begin{align*}
f(z)=f(x^\star)+\ip{\nabla f((1-\alpha)x^\star+\alpha z), z-x}.
\end{align*}
The convexity of $[\delta,\infty)$ and the boundedness of $\norm{\nabla f(\cdot)}$ on $[\delta,\infty)^n$ imply that $\norm{\nabla f((1-\alpha)x^\star+\alpha z)}\le(a_0/\norm{\tilde{c}})(\norm{c}/(\delta\norm{\tilde{c}}+c_0)+\norm{a}/\delta)$.
Therefore, the Cauchy-Schwarz inequality gives us the desired inequality as follows:
\begin{align*}
\abs{f(z)-f(x^\star)}
&=\abs{\ip{\nabla f((1-\alpha)x^\star+\alpha z), z-x}}\\
&\le\norm{\nabla f((1-\alpha)x^\star+\alpha z)}\norm{z-x}\\
&\le\frac{a_0}{\norm{\tilde{c}}}\left(\frac{1}{\delta\norm{\tilde{c}}+c_0}\norm{c}+\frac{\norm{a}}{\delta}\right)\norm{z-x}.
\end{align*}
This implies that the functional $f$ satisfies the H\"older condition with degree $1$ at the point $x^\star$ on the set $[\delta,\infty)^n$.

Next, fix $\bar{z}\in\overline{[\delta,\infty)^n}$ arbitrarily.
The closed ball with center $x^\star$ and radius $\delta$ is contained inside $[\delta,\infty)^n$.
Hence, $\norm{x^\star-z}\ge\delta$ holds.
From the definition of $f$, the maximum value of $f$ is $0$ and its range is less than or equal to $0$.
Therefore,
\begin{align*}
\abs{f(z)-f(x^\star)}
\le \frac{f(x^\star)}{\delta}\norm{x^\star-z}
\end{align*}
holds.
Hence, letting $L:=\max\{(a_0/\norm{\tilde{c}})(\norm{c}/(\delta\norm{\tilde{c}}+c_0)+\norm{a}/\delta),f(x^\star/\delta)\}$, it is clear that $f$ satisfies the H\"older condition with degree $1$ at the point $x^\star$ on the set $\real^n$.
This completes the proof.
\end{proof}

\section{Proofs of Lemmas~\ref{lem:fun} and \ref{lem:dist}}\label{appendix:lemmasproofs}
\begin{proof}[Proof of Lemma~\ref{lem:fun}]
  Fix $x^\star\in X^\star$ and $k\in\nat$ arbitrarily.
  The convexity of $\norm{\cdot}^2$ and the nonexpansivity of $P_D$ and $T$ ensure that
  \begin{align}
  \norm{x_{k+1}-x^\star}^2
  &=\norm{P_D(\alpha_kx_k+(1-\alpha_k)T(x_k-v_kg_k))-P_D(x^\star)}^2\nonumber\\
  &\le\norm{\alpha_kx_k+(1-\alpha_k)T(x_k-v_kg_k)-x^\star}^2\nonumber\\
  &\le\alpha_k\norm{x_k-x^\star}^2+(1-\alpha_k)\norm{T(x_k-v_kg_k)-T(x^\star)}^2\nonumber\\
  &\le\alpha_k\norm{x_k-x^\star}^2+(1-\alpha_k)\norm{x_k-x^\star-v_kg_k}^2\nonumber\\
  &=\norm{x_k-x^\star}^2-2v_k(1-\alpha_k)\ip{g_k,x_k-x^\star}+(1-\alpha_k)v_k^2.\label{eqn:2}
  \end{align}
  On the other hand, Assumption~(A\ref{assum:holder}) and Proposition~\ref{prop:konnov} ensure that
  \begin{align*}
  \left(\frac{f(x_k)-f_\star}{L}\right)^{\frac{1}{\beta}}
  \le\ip{g_k,x_k-x^\star}.
  \end{align*}
  Applying this inequality to inequality~\eqref{eqn:2}, we obtain
  \begin{align*}
  \norm{x_{k+1}-x^\star}^2
  &\le\norm{x_k-x^\star}^2-2v_k(1-\alpha_k)\left(\frac{f(x_k)-f_\star}{L}\right)^{\frac{1}{\beta}}+(1-\alpha_k)v_k^2.
  \end{align*}
  This completes the proof.
\end{proof}

\begin{proof}[Proof of Lemma~\ref{lem:dist}]
  Fix $x\in\Fix(T)\cap D$ and $k\in\nat$ arbitrarily, and set $M_1:=\sup_{k\in\nat}(2\abs{\ip{g,x-T(x_k-v_kg_k)}})$.
  The boundedness of $\{x_k\}$ and $\{v_k\}$ ensures that there exist $M_2,M_3\in\real$ such that
  \begin{align*}
  \norm{x_j}\le M_2,\quad
  v_j\le M_3\quad(j\in\nat).
  \end{align*}
  Therefore,
  \begin{align*}
  \norm{x-T(x_j-v_jg_j)}
  &\le\norm{x}+\norm{x_j}+v_j\\
  &\le\norm{x}+M_2+M_3<\infty\quad(j\in\nat).
  \end{align*}
  The Cauchy-Schwarz inequality, together with this boundedness of the real sequence $\{\norm{x-T(x_k-v_kg_k)}\}$, indicates that $M_1\le\norm{x}+M_2+M_3<\infty$.
  
  From the convexity of $\norm{\cdot}^2$, we have
  \begin{align}
  \norm{x_{k+1}-x}^2
  &=\norm{P_D(\alpha_kx_k+(1-\alpha_k)T(x_k-v_kg_k))-P_D(x)}^2\nonumber\\
  &\le\norm{\alpha_k(x_k-x)+(1-\alpha_k)(T(x_k-v_kg_k)-x)}^2\nonumber\\
  &\le\alpha_k\norm{x_k-x}^2+(1-\alpha_k)\norm{T(x_k-v_kg_k)-x}^2.\label{eqn:3}
  \end{align}
  Let us consider the term $\norm{T(x_k-v_kg_k)-x}^2$.
  Using the firm nonexpansivity of $T$, we expand this term into
  \begin{align*}
  &\norm{T(x_k-v_kg_k)-x}^2\\
  &\le\norm{x_k-v_kg_k-x}^2-\norm{(\Id-T)(x_k-v_kg_k)-(\Id-T)(x)}^2\\
  &=\norm{x_k-x}^2-2v_k\ip{g_k,x_k-x}+v_k^2\\
  &\quad-\norm{x_k-T(x_k-v_kg_k)}^2+2v_k\ip{g_k,x_k-T(x_k-v_kg_k)}-v_k^2\\
  &=\norm{x_k-x}^2-\norm{x_k-T(x_k-v_kg_k)}^2+2v_k\ip{g_k,x-T(x_k-v_kg_k)}.
  \end{align*}
  In view of the definition of $M_1$, the set $\{2v_k\ip{g_k,x-T(x_k-v_kg_k)}:k\in\nat\}$ is bounded from above by it.
  Therefore, we obtain
  \begin{align*}
  \norm{T(x_k-v_kg_k)-x}^2
  \le\norm{x_k-x}^2-\norm{x_k-T(x_k-v_kg_k)}^2+v_kM_1.
  \end{align*}
  Applying this inequality to inequality~\eqref{eqn:3} yields the desired inequality:
  \begin{align*}
  &\norm{x_{k+1}-x}^2\\
  &\le\alpha_k\norm{x_k-x}^2+(1-\alpha_k)\left(\norm{x_k-x}^2-\norm{x_k-T(x_k-v_kg_k)}^2+v_kM_1\right)\\
  &\le\norm{x_k-x}^2-(1-\alpha_k)\norm{x_k-T(x_k-v_kg_k)}^2+v_kM_1
  \end{align*}
  This completes the proof.
\end{proof}

\section{Proof of Theorem~\ref{thm:constant}}\label{appendix:constantproof}
\begin{proof}
We prove each inequality in order.
First, we consider whether the inequality
\begin{align}
\liminf_{k\to\infty}f(x_k)\le f_\star+L\left(\frac{v}{2}\right)^\beta\label{eqn:4}
\end{align}
holds.
We will proceed by way of contradiction. Suppose that the inequality does not hold; i.e.,
\begin{align*}
f_\star+L\left(\frac{v}{2}\right)^\beta<\liminf_{k\to\infty}f(x_k).
\end{align*}
The left-hand side of this inequality is strictly less than the right-hand side.
Hence, with the positivity of $L$, we can choose a positive $\delta_1$ such that
\begin{align*}
f_\star+L\left(\frac{v}{2}+\delta_1\right)^\beta<\liminf_{k\to\infty}f(x_k)
\end{align*}
holds.
The property of the limit inferior guarantees that there exists $k_0\in\nat$ such that
\begin{align}
f_\star+L\left(\frac{v}{2}+\delta_1\right)^\beta<f(x_k)\quad(k\ge k_0).\label{eqn:5}
\end{align}
Obviously, this implies that $f_\star<f(x_k)$ for any $k\ge k_0$.
Therefore, all assumptions of Lemma~\ref{lem:fun} are satisfied when $k\ge k_0$, and the following inequality holds for some $x^\star\in X^\star\neq\emptyset$.
\begin{align*}
&\norm{x_{k+1}-x^\star}^2\\
&\le\norm{x_k-x^\star}^2-2v(1-\alpha_k)\left(\frac{f(x_k)-f_\star}{L}\right)^\frac{1}{\beta}+(1-\alpha_k)v^2\quad(k\ge k_0).
\end{align*}
Applying inequality~\eqref{eqn:5} to the above inequality, we have
\begin{align}
\norm{x_{k+1}-x^\star}^2
&\le\norm{x_k-x^\star}^2-2v\delta_1(1-\alpha_k)\nonumber\\
&\le\norm{x_{k_0}-x^\star}^2-2v\delta_1\sum_{j=k_0}^k(1-\alpha_k)\quad(k\ge k_0).\label{eqn:6}
\end{align}
From Assumption~(A\ref{assum:alpha}), $\limsup_{k\to\infty}\alpha_k<1$ holds.
Hence, a starting index $k_1\in\nat$ greater than $k_0$ exists such that the subsequence $\{\alpha_k\}_{k\ge k_1}$ is bounded above by some positive real that is strictly less than 1.
This means that inequality~\eqref{eqn:6} does not hold for large enough $k\ge k_1$, and we have arrived at a contradiction.
Therefore, inequality~\eqref{eqn:4} holds.

Next, let us prove the remaining part of this theorem, in other words, show that the inequality
\begin{align*}
\liminf_{k\to\infty}\norm{x_k-T(x_k)}^2\le Mv
\end{align*}
holds for some positive real $M>0$.
Fix $x\in\Fix(T)$ arbitrarily.
Lemma~\ref{lem:dist} guarantees the existence of a nonnegative real $M_1\ge 0$ such that
\begin{align}
\norm{x_{k+1}-x}^2
\le\norm{x_k-x}^2-(1-\alpha_k)\norm{x_k-T(x_k-vg_k)}^2+vM_1\quad(k\in\nat).\label{eqn:7}
\end{align}
In view of Assumption~(A\ref{assum:alpha}), $\liminf_{k\to\infty}(1-\alpha_k)$ is positive; i.e., it is not equal to zero.
We will again proceed by way of contradiction and suppose that
\begin{align}
\liminf_{k\to\infty}\norm{x_k-T(x_k-vg_k)}^2
\le\frac{2vM_1}{\liminf_{k\to\infty}(1-\alpha_k)}\label{eqn:8}
\end{align}
does not hold and
\begin{align*}
\frac{2vM_1}{\liminf_{k\to\infty}(1-\alpha_k)}
<\liminf_{k\to\infty}\norm{x_k-T(x_k-vg_k)}^2
\end{align*}
holds.
In the same ways choosing $\delta_1$ in the first part of this proof, we can find a positive $\delta_2>0$ that satisfies
\begin{align*}
\frac{2vM_1}{\liminf_{k\to\infty}(1-\alpha_k)}+\delta_2
<\liminf_{k\to\infty}\norm{x_k-T(x_k-vg_k)}^2.
\end{align*}
The property of the limit inferior guarantees that a positive number $k_2\in\nat$ exists such that
\begin{align*}
\frac{2vM_1}{\liminf_{k\to\infty}(1-\alpha_k)}+\delta_2
<\norm{x_k-T(x_k-vg_k)}^2\quad(k\ge k_2).
\end{align*}
Applying the above inequality to inequality~\eqref{eqn:7}, we obtain
\begin{align*}
&\norm{x_{k+1}-x}^2\\
&\le\norm{x_k-x}^2-(1-\alpha_k)\left(\frac{2vM_1}{\liminf_{k\to\infty}(1-\alpha_k)}+\delta_2\right)+vM_1\quad(k\ge k_2).
\end{align*}
The fundamental property of the limit inferior also ensures the existence of a number $k_3\in\nat$ larger than $k_2$ such that $\liminf_{k\to\infty}(1-\alpha_k)/2<(1-\alpha_k)$ for any $k\ge k_3$.
Therefore, we have
\begin{align*}
\norm{x_{k+1}-x}^2
&\le\norm{x_k-x}^2-\frac{\delta_2}{2}\liminf_{k\to\infty}(1-\alpha_k)\\
&\le\norm{x_{k_3}-x}^2-\frac{\delta_2}{2}(k-k_3+1)\liminf_{k\to\infty}(1-\alpha_k)\quad(k\ge k_3).
\end{align*}
Since the above inequality does not hold for large enough $k\ge k_3$, we have arrived at a contradiction.
Therefore, inequality~\eqref{eqn:8} holds.

With this inequality, let us evaluate the squared distance between an element of the generated sequence and its transformed point by the nonexpansive mapping $T$.
Using the triangle inequality and the nonexpansivity of $T$, we have
\begin{align*}
\norm{x_k-T(x_k)}^2
&\le\left(\norm{x_k-T(x_k-vg_k)}+\norm{T(x_k-vg_k)-T(x_k)}\right)^2\\
&\le\left(\norm{x_k-T(x_k-vg_k)}+v\right)^2\\
&=\norm{x_k-T(x_k-vg_k)}^2+2v\norm{x_k-T(x_k-vg_k)}+v^2\quad(k\in\nat).
\end{align*}
Now, since $x$ is a fixed point of the mapping $T$, we expand the second term of the above expression as $\norm{x_k-T(x_k-vg_k)}\le2\norm{x_k-x}+v$ for any $k\in\nat$.
Furthermore, Assumption~(A\ref{assum:bounded}) ensures the existence of a positive real $M_2>0$ bounding the set $\{\norm{x_k-x}:k\in\nat\}$ from above.
Hence, we finally obtain
\begin{align*}
\norm{x_k-T(x_k)}^2
&\le\norm{x_k-T(x_k-vg_k)}^2+4v(M_2+v)\quad(k\in\nat).
\end{align*}
Taking the limit inferior of both sides of the above inequality yields
\begin{align*}
\liminf_{k\to\infty}\norm{x_k-T(x_k)}^2
&\le\liminf_{k\to\infty}\norm{x_k-T(x_k-vg_k)}^2+4v(M_2+v).
\end{align*}
Set $M:=2(M_1/\liminf_{k\to\infty}(1-\alpha_k)+2(M_2+v))\in\real$.
Applying inequality~\eqref{eqn:8} to the above, we obtain the desired inequality as follows:
\begin{align*}
\liminf_{k\to\infty}\norm{x_k-T(x_k)}^2
&\le 2\left(\frac{M_1}{\liminf_{k\to\infty}(1-\alpha_k)}+2(M_2+v)\right)v\\
&\le Mv.
\end{align*}
This completes the proof.
\end{proof}

\section{Proof of Lemma~\ref{lem:dsr}}\label{appendix:dsrlemma}
\begin{proof}
  We will proceed by way of contradiction and suppose that the conclusion $\liminf_{k\to\infty}f(x_k)\le f_\star$ does not hold, that is, $\liminf_{k\to\infty}f(x_k)>f_\star$.
  There exists a positive number $\delta>0$ and an index $k_0\in\nat$ such that $f_\star+\delta<f(x_k)$ for all $k\ge k_0$.
  Furthermore, the assumption that the real sequence $\{v_k\}$ converges to zero guarantees the existence of an index $k_1\ge k_0$ such that $v_k<(\delta/L)^{1/\beta}$ for all $k\ge k_1$.
  Applying these two inequalities to Lemma~\ref{lem:fun}, we have
  \begin{align*}
  \norm{x_{k+1}-x^\star}^2
  &\le\norm{x_k-x^\star}^2-2v_k(1-\alpha_k)\left(\frac{f(x_k)-f_\star}{L}\right)^\frac{1}{\beta}+(1-\alpha_k)v_k^2\\
  &<\norm{x_k-x^\star}^2-v_k(1-\alpha_k)\left(\frac{\delta}{L}\right)^\frac{1}{\beta}\\
  &<\norm{x_{k_1}-x^\star}^2-\left(\frac{\delta}{L}\right)^\frac{1}{\beta}\sum_{n=k_1}^kv_k(1-\alpha_k)\quad(k\ge k_1).
  \end{align*}
  Assumption~(A\ref{assum:alpha}) guarantees the existence of a positive number $\underline{\alpha}$ and an index $k_2\ge k_1$ such that $\underline{\alpha}<1-\alpha_k$ for any $k\ge k_2$.
  This implies that the above inequality does not hold for a sufficiently large $k\ge k_2$; hence, we arrive at a contradiction.
  This completes the proof.
  \end{proof}

\section{Proof of the Additional Statement of Theorem~\ref{thm:dsr}}\label{proof:thm:dsr}
\begin{proof}
Let us prove the additional statement of this theorem.
The following proof is played under the assumption that the solution $x^\star\in X^\star$ is unique and the whole space is an $N$-dimensional Euclidean space, i.e., $\Hsp=\real^N$.

The existence of a subsequence $\{x_{k_i}\}$ that weakly converges to a unique solution $x^\star$ is obtained from Theorem~\ref{thm:dsr}.
In Euclidean space, weak convergence coincides with strong convergence.
Therefore, the sequence $\{x_{k_i}\}$ converges to a unique $x^\star$.
If some number $k_0\in\nat$ exists such that $\norm{x_{k+1}-x^\star}\le\norm{x_k-x^\star}$ for all $k\ge k_0$, the whole sequence $\{x_k\}$ converges to a point in $X^\star$.
Let us consider the opposite case.
Let $\{k_i\}\subset\nat$ be the sequence of all indexes satisfying $\norm{x_{k_i}-x^\star}<\norm{x_{k_i+1}-x^\star}$ (and $k_i<k_{i+1}$ for any $i\in\nat$).
According to the assumption, this sequence is infinite.
The sequence $\{x_{k_i}\}$ is now bounded, and this implies that it has a subsequence converging to a unique optimum $x^\star\in X^\star$.
The above discussion ensures that any converging subsequence of $\{x_{k_i}\}$ converges to a unique optimum $x^\star\in X^\star$.
This further implies that $\{x_{k_i}\}$ also converges to this optimum $x^\star\in X^\star$.
From the assumed settings, $\norm{x_{j+1}-x^\star}\le\norm{x_j-x^\star}$ holds for any index $j\in\nat$ that does not belong to the set $\{k_i\}$.
The convergence of the sequence $\{x_{k_i}\}$ means that, for any $\epsilon>0$, there exists an index $\hat{i}\in\nat$ such that $\norm{x_{k_{\hat{i}}}-x^\star}<\epsilon$.
Furthermore, for any index $k\ge k_{\hat{i}}$ that does not belong to the set $\{k_i\}$, $\norm{x_{k_{\hat{i}}}-x^\star}\le\norm{x_{k_i}-x^\star}<\epsilon$ also holds for $i:=\max\{k_i:k_i\le k\}\ge k_{\hat{i}}$.
This implies that the whole sequence $\{x_k\}$ converges to a unique optimum $x^\star\in X^\star$.
This completes the proof.
\end{proof}

\section{Efficiency}\label{sec:finiteconvergence}
Here, we discuss the rate of convergence of Algorithm~\ref{alg:main} in terms of the value of the objective functional and the distance to the fixed point set.
Furthermore, we discuss a sufficient condition to obtain finite convergence to some solution.

Let us start with the convergence rate analysis in terms of the objective function.
Here, we will use the following concepts originally introduced in \cite{yaohua2015,kiwiel2001}.
\begin{dfn}[{\cite[Section~5]{yaohua2015}, \cite[Section~6]{kiwiel2001}}]\label{M:15}
  Define the following notations:
  \begin{enumerate}[(i)]
    \item $x^\star_k:\in\argmin_{x\in\{x_1,x_2,\ldots,x_k\}}f(x)$,
    \item $r_k:=\sup\{r>0:x^\star_k+r\Ball\subset\lev_{<f(x^\star_k)}f\}$.
  \end{enumerate}
\end{dfn}
$x^\star_k$ expresses the best solution acquired until the $k$-th iteration, and $r_k$ expresses the distance between the level set of $x^\star_k$ and the optimal solution.
The difference between the above definitions and the original ones is in considering the possibility that the generated sequence may be out of the fixed point set, in other words, the constraint set.

We will use the following propositions.
\begin{prop}\label{prop:isom}
  Let $C\subset \Hsp$ be a nonempty, convex set, and suppose that $x\in C$, $y\not\in C$.
  Then, there exists $\alpha\in[0,1]$ such that $x+\alpha(y-x)\in\bd(C)$.
\end{prop}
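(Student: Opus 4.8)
The plan is to march along the segment from $x$ to $y$ and stop at the last instant it is still ``in contact'' with $C$. Concretely, write $z_\alpha := x + \alpha(y-x)$ for $\alpha\in[0,1]$ and set $A := \{\alpha\in[0,1] : z_\alpha\in C\}$. Then $0\in A$ because $z_0 = x\in C$, and $1\notin A$ because $z_1 = y\notin C$. Convexity of $C$ makes $A$ ``downward closed'' in $[0,1]$: if $\alpha\in A$ and $0\le\beta\le\alpha$, then $z_\beta$ is a convex combination of $x\in C$ and $z_\alpha\in C$, so $z_\beta\in C$ and $\beta\in A$. Hence $A$ is a subinterval of $[0,1]$ with left endpoint $0$. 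Put $\alpha^\star := \sup A$; if $\alpha^\star\in A$ then necessarily $\alpha^\star < 1$ since $1\notin A$, and in all cases the candidate boundary point will be $z_{\alpha^\star}$.

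Next I would verify that $z_{\alpha^\star}\in\bd(C)$, i.e.\ that every neighborhood of $z_{\alpha^\star}$ meets both $C$ and $\Hsp\setminus C$, splitting into two cases according to whether the supremum is attained. If $\alpha^\star\in A$, then $z_{\alpha^\star}\in C$, so every neighborhood meets $C$; and since $\alpha^\star < 1$ and $A$ is an interval, $z_\alpha\notin C$ for every $\alpha\in(\alpha^\star,1]$, so letting $\alpha\downarrow\alpha^\star$ yields points $z_\alpha\in\Hsp\setminus C$ arbitrarily close to $z_{\alpha^\star}$. If $\alpha^\star\notin A$, then $z_{\alpha^\star}\notin C$, so every neighborhood meets $\Hsp\setminus C$ (it already contains $z_{\alpha^\star}$); and by definition of the supremum there is a sequence $\alpha_n\uparrow\alpha^\star$ with $\alpha_n\in A$, whence $z_{\alpha_n}\in C$ and $z_{\alpha_n}\to z_{\alpha^\star}$ by continuity of $\alpha\mapsto z_\alpha$, so every neighborhood meets $C$. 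In either case $z_{\alpha^\star}\in\bd(C)$, which is the assertion with $\alpha := \alpha^\star$.

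The only genuinely delicate point is that $C$ is \emph{not} assumed closed, so the naive idea ``take the last point of the segment lying in $C$'' may fail because $\alpha^\star$ need not be attained; the case analysis above is precisely what repairs this, and in the non-attained case it is $z_{\alpha^\star}$ itself (a point outside $C$) together with an interior approximating sequence that witnesses membership in $\bd(C)$. No completeness or compactness of $\Hsp$ enters --- only that $[0,1]$ admits suprema and that the parametrization of the segment is continuous.
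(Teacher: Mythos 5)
Your proof is correct and follows essentially the same route as the paper's: both take $\alpha:=\sup\{\hat{\alpha}\in[0,1]:x+\hat{\alpha}(y-x)\in C\}$ and verify that the corresponding point on the segment is adherent to both $C$ and its complement. Your explicit case split on whether the supremum is attained (and the observation that convexity is only a convenience here) is a slightly more careful write-up of the same argument, which the paper carries out directly with an $\epsilon$-estimate from the supremum property.
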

\begin{proof}
  Define $\alpha:=\sup\{\hat{\alpha}\in[0,1]:x+\hat{\alpha}(y-x)\in C\}$ and fix $\epsilon>0$ arbitrarily.
  From the properties of the supremum, there exists $\beta>\alpha-\epsilon/\norm{y-x}$ such that $x+\beta(y-x)\in C$.
  Thus, we have
  \begin{align*}
    \norm{(x+\alpha(y-x))-(x+\beta(y-x))}=(\alpha-\beta)\norm{y-x}<\epsilon.
  \end{align*}
  Since $\epsilon>0$ was chosen arbitrarily, the above inequality implies that the point $(x+\alpha(y-x))$ is an adherent point of $C$.
  Furthermore, there exists $\gamma>\alpha+\epsilon/\norm{y-x}$ such that $x+\gamma(y-x)\not\in C$ due to the properties of the supremum.
  Hence, we also have
  \begin{align*}
    \norm{(x+\alpha(y-x))-(x+\gamma(y-x))}=(\gamma-\alpha)\norm{y-x}<\epsilon.
  \end{align*}
  Since $\epsilon>0$ was chosen arbitrarily, the above inequality implies that the point $(x+\alpha(y-x))$ is an adherent point of the complement of $C$.
  Therefore, $x+\alpha(y-x)$ belongs to the boundary of $C$.
  This completes the proof.
\end{proof}

\begin{prop}[{\cite[Corollary~2.15]{bc}}]\label{prop:normexpand}
  Let $x,y\in\Hsp$, and let $\alpha\in\real$.
  Then,
  \begin{align*}
    \norm{\alpha x+(1-\alpha)y}^2
    =\alpha\norm{x}^2+(1-\alpha)\norm{y}^2-\alpha(1-\alpha)\norm{x-y}^2
  \end{align*}
  holds.
\end{prop}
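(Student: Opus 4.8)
The plan is to establish the identity by a single direct expansion, using only the bilinearity and symmetry of the inner product $\ip{\cdot,\cdot}$ together with the relation $\norm{u}^2=\ip{u,u}$. Note that completeness of $\Hsp$ plays no role, so the same argument works in any real inner product space; this is also why the result can simply be quoted from \cite[Corollary~2.15]{bc}.

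First I would expand the left-hand side:
\begin{align*}
\norm{\alpha x+(1-\alpha)y}^2
=\alpha^2\norm{x}^2+2\alpha(1-\alpha)\ip{x,y}+(1-\alpha)^2\norm{y}^2.
\end{align*}
Next I would expand the only cross term occurring on the right-hand side, namely $\norm{x-y}^2=\norm{x}^2-2\ip{x,y}+\norm{y}^2$, substitute it into $\alpha\norm{x}^2+(1-\alpha)\norm{y}^2-\alpha(1-\alpha)\norm{x-y}^2$, and then regroup by the coefficients of $\norm{x}^2$, $\norm{y}^2$, and $\ip{x,y}$. The coefficient of $\ip{x,y}$ comes out as $2\alpha(1-\alpha)$ on both sides at once; for the remaining two it suffices to observe the elementary identities $\alpha-\alpha(1-\alpha)=\alpha^2$ and $(1-\alpha)-\alpha(1-\alpha)=(1-\alpha)^2$, after which the two sides agree term by term.

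An alternative is to recognise the statement as the two-point instance of the general identity $\norm{\sum_i\lambda_ix_i}^2=\sum_i\lambda_i\norm{x_i}^2-\sum_{i<j}\lambda_i\lambda_j\norm{x_i-x_j}^2$, valid whenever $\sum_i\lambda_i=1$ (here $\lambda_1=\alpha$, $\lambda_2=1-\alpha$); but since $\alpha$ is permitted to be an arbitrary real rather than restricted to $[0,1]$, the direct computation above is both shorter and makes transparent that convexity of the combination is irrelevant. There is no real obstacle: the only work is the bookkeeping of the three coefficients, which is wholly routine.
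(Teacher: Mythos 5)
Your proof is correct: the direct expansion via bilinearity of the inner product, together with the identities $\alpha-\alpha(1-\alpha)=\alpha^2$ and $(1-\alpha)-\alpha(1-\alpha)=(1-\alpha)^2$, verifies the identity term by term, and your remark that completeness of $\Hsp$ is irrelevant is accurate. The paper itself gives no proof and simply cites \cite[Corollary~2.15]{bc}; your computation is exactly the standard argument behind that reference, so there is nothing further to reconcile.
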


We start by proving the following lemma which leads us to the convergence rate of Algorithm~\ref{alg:main}.
\begin{lem}\label{lem:cvrate}
  Let $\{x_k\}\subset\Hsp$ be a sequence generated by Algorithm~\ref{alg:main}, and suppose that Assumptions~\ref{assum:basic} and \ref{assum:0} hold.
  Assume that the sequence $\{v_k\}$ is bounded.
  Then,
  \begin{align*}
    r_k
    \le\frac{\norm{x_i-x^\star}^2+\sum_{j=i}^k(1-\alpha_j)v_j^2}{2\sum_{j=i}^kv_j(1-\alpha_j)}
  \end{align*}
  for any $x^\star\in X^\star$, $k\in\nat$, and $i\in\{1,2,\ldots,k\}$.
\end{lem}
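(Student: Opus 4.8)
The plan is to convert the geometric quantity $r_k$ into a lower bound for each subgradient inner product $\ip{g_j,x_j-x^\star}$, and then to feed this into the one–step estimate already obtained in the proof of Lemma~\ref{lem:fun}, summing over $j=i,\dots,k$. Throughout, one should read $r_k$ as the radius of the largest ball \emph{centred at an optimal solution $x^\star\in X^\star$} contained in the slice $\lev_{<f(x^\star_k)}f$ (this is exactly ``the distance between the level set of $x^\star_k$ and the optimal solution''). If $x^\star_k$ is itself a minimizer, that slice is empty, so $r_k=0$ and the claimed bound is trivial; hence I may assume $f_\star<f(x^\star_k)$, which guarantees $x^\star\in\lev_{<f(x^\star_k)}f$ (an open set), so $r_k>0$, and also $r_k<\infty$ since a ball around $x^\star$ of radius $>\norm{x^\star_k-x^\star}$ contains $x^\star_k\notin\lev_{<f(x^\star_k)}f$.

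First I would prove that $r_k\le\ip{g_j,x_j-x^\star}$ for every $j\in\{1,2,\dots,k\}$. Since $x^\star_k$ minimizes $f$ over $\{x_1,\dots,x_k\}$ and $j\le k$, we have $f(x^\star_k)\le f(x_j)$, hence $\lev_{<f(x^\star_k)}f\subset\lev_{<f(x_j)}f$. For any $r\in(0,r_k)$ the definition of $r_k$ gives $x^\star+r\Ball\subset\lev_{<f(x^\star_k)}f\subset\lev_{<f(x_j)}f$; in particular $x^\star+rg_j\in\lev_{<f(x_j)}f$ because $\norm{rg_j}=r$. Since $g_j\in\partial^\star f(x_j)$ is, by definition, a normal vector of that slice and $\norm{g_j}=1$,
\begin{align*}
0\ge\ip{g_j,(x^\star+rg_j)-x_j}=\ip{g_j,x^\star-x_j}+r ,
\end{align*}
so $r\le\ip{g_j,x_j-x^\star}$; letting $r\uparrow r_k$ proves the claim.

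Next I would combine this with the one–step estimate from the proof of Lemma~\ref{lem:fun} — which uses only the convexity of $\norm{\cdot}^2$, the nonexpansivity of $P_D$ and $T$, and $x^\star\in X\cap D=\Fix(P_D)\cap\Fix(T)$ — namely, for every $j\in\nat$,
\begin{align*}
\norm{x_{j+1}-x^\star}^2\le\norm{x_j-x^\star}^2-2v_j(1-\alpha_j)\ip{g_j,x_j-x^\star}+(1-\alpha_j)v_j^2 .
\end{align*}
Inserting $\ip{g_j,x_j-x^\star}\ge r_k$ (permissible because the coefficient $-2v_j(1-\alpha_j)\le 0$), summing over $j=i,i+1,\dots,k$, and discarding the nonnegative term $\norm{x_{k+1}-x^\star}^2$ would give
\begin{align*}
0\le\norm{x_i-x^\star}^2-2r_k\sum_{j=i}^{k}v_j(1-\alpha_j)+\sum_{j=i}^{k}(1-\alpha_j)v_j^2 ,
\end{align*}
and dividing by $2\sum_{j=i}^{k}v_j(1-\alpha_j)$ — which is positive for all large $i$ by Assumption~(A\ref{assum:alpha}), the case of interest (for smaller $i$ with a vanishing denominator the estimate is vacuous) — yields the asserted bound.

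The only genuinely delicate point is the first step: one must read $r_k$ correctly as a ball radius at the optimal point, exploit the slice nesting $\lev_{<f(x^\star_k)}f\subset\lev_{<f(x_j)}f$ (which holds precisely because $x^\star_k$ is the incumbent best iterate through step $k\ge j$), and then use the characterisation of $\partial^\star f(x_j)$ as the normal cone of that slice together with the normalisation $\norm{g_j}=1$. Everything afterwards is the same telescoping argument used for Lemma~\ref{lem:fun} and Theorem~\ref{thm:constant}.
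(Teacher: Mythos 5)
Your proposal is correct and follows essentially the same route as the paper's proof: bound $\ip{g_j,x_j-x^\star}$ from below by a radius just under $r_k$ using the ball around $x^\star$ contained in the nested slices $\lev_{<f(x^\star_k)}f\subset\lev_{<f(x_j)}f$, insert this into the one-step inequality from the proof of Lemma~\ref{lem:fun}, sum over $j=i,\dots,k$, and pass to the limit in the auxiliary radius. The only differences are cosmetic — you use the test point $x^\star+rg_j$ (with the correct sign, whereas the paper writes $x^\star-\delta g_j$ yet uses the inequality your sign actually yields) and you justify the slice inclusion directly via $f(x^\star_k)\le f(x_j)$ rather than via monotonicity of $\{r_k\}$.
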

\begin{proof}
  Fix $x^\star\in X^\star$, $k\in\nat$, and $i\in\{1,2,\ldots,k\}$ arbitrarily.
  If $r_k$ is nonpositive, the statement obviously holds.
  Therefore, let us consider the case where $r_k$ is positive in the following.
  Fix $\delta\in(0,r_k)$ arbitrarily.
  The definition of $r_k$ and monotonicity of the sequence $\{r_k\}$ imply that $x^\star-\delta g_j$ belongs to the level set $\lev_{<f(x^\star_j)}f\subset \lev_{<f(x_j)}f$ for any $j=1,2,\ldots,k$.
  Therefore, $\ip{g_j,(x^\star-\delta g_j)-x_j}\le 0$ holds for all $j=1,2,\ldots,k$.
  Rearranging this inequality with the property $\norm{g_j}=1$, we have
  \begin{align*}
    \ip{g_j,x^\star-x_j}\le \delta
  \end{align*}
  for all $j=1,2,\ldots,k$.
  Here, the assumption $r_k>0$ implies that $f_\star<f(x_j)$ for all $j=1,2,\ldots,k$.
  Hence, all assumptions of Lemma~\ref{lem:fun} are satisfied and
  \begin{align*}
    \norm{x_{j+1}-j^\star}^2
    & \le \norm{x_j-x^\star}^2-2v_j(1-\alpha_j)\ip{g_j,x_j-x^\star}+(1-\alpha_j)v_j^2 \\
    & \le \norm{x_j-x^\star}^2-2\delta v_j(1-\alpha_j)+(1-\alpha_j)v_j^2
  \end{align*}
  is guaranteed by inequality~\eqref{eqn:2} for all $j=1,2,\ldots,k$.
  Summing the above inequalities from $j=i$ to $j=k$ yields
  \begin{align*}
    0
    & \le \norm{x_i-x^\star}^2-2\delta\sum_{j=i}^kv_j(1-\alpha_j)+\sum_{j=i}^k(1-\alpha_j)v_j^2.
  \end{align*}
  Transposing the term of $\delta$, we get
  \begin{align*}
    \delta
    & \le \frac{\norm{x_i-x^\star}^2+\sum_{j=i}^k(1-\alpha_j)v_j^2}{2\sum_{j=i}^kv_j(1-\alpha_j)}.
  \end{align*}
  The arbitrariness of $\delta\in(0,r_k)$ implies that
  \begin{align*}
    r_k
    & \le \frac{\norm{x_i-x^\star}^2+\sum_{j=i}^k(1-\alpha_j)v_j^2}{2\sum_{j=i}^kv_j(1-\alpha_j)}.
  \end{align*}
  This completes the proof.
\end{proof}

In general settings, we can use this lemma to analyze the convergence rate in terms of $r_k$.
The following two propositions present such an analysis for constant and diminishing step-size rules.
\begin{prop}[Convergence rate analysis for constant step-size rule]\label{prop:cvrate:const}
  Let $\{x_k\}\subset H$ be a sequence generated by Algorithm~\ref{alg:main}, and suppose that the assumptions in Theorem~\ref{thm:constant} hold.
  Then, there exists a number $k_0\in\nat$ such that
  \begin{align*}
      r_k
      & \le \frac{1}{1-\limsup_{j\to\infty}\alpha_j}\left(\frac{\norm{x_{k_0}-x^\star}^2}{(k-k_0+1)v}+\left(1-\frac{1}{2}\liminf_{j\to\infty}\alpha_j\right)v\right)
  \end{align*}
  holds for all $k\ge k_0$, in other words,
  \begin{align*}
      r_k
      & =\mathcal{O}(1/k+v).
  \end{align*}

  Furthermore,
  \begin{align*}
      r_k\le\frac{\|x_1-x^\star\|^2}{2(1-\alpha)kv}+\frac{1}{2}v
  \end{align*}
  holds for all $k\in\nat$ when the sequence $\{\alpha_k\}$ satisfies $\alpha_k=\alpha\in(0,1)$ for all $k\in\nat$.
\end{prop}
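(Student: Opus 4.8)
The plan is to extract everything from Lemma~\ref{lem:cvrate}, which was stated precisely for this purpose; under the constant step-size $v_k\equiv v$ the only real work is to bound the partial sums $\sum_{j=i}^{k}(1-\alpha_j)$ using Assumption~(A\ref{assum:alpha}). Throughout, fix $x^\star\in X^\star$ (nonempty by Assumption~(A\ref{a3:nonempty})).

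Write $\bar{\alpha}:=\limsup_{j\to\infty}\alpha_j$ and $\underline{\alpha}:=\liminf_{j\to\infty}\alpha_j$, so that $0<\underline{\alpha}\le\bar{\alpha}<1$ by Assumption~(A\ref{assum:alpha}). Using the defining properties of the limit superior and inferior, I would choose $k_0\in\nat$ large enough that both $\alpha_j<(1+\bar{\alpha})/2$ and $\alpha_j>\underline{\alpha}/2$ hold for every $j\ge k_0$; equivalently,
\[
\tfrac{1}{2}(1-\bar{\alpha})\le 1-\alpha_j\le 1-\tfrac{\underline{\alpha}}{2}\qquad(j\ge k_0).
\]
Summing over $j=k_0,\ldots,k$ gives $\tfrac{1}{2}(k-k_0+1)(1-\bar{\alpha})\le\sum_{j=k_0}^{k}(1-\alpha_j)\le(k-k_0+1)(1-\underline{\alpha}/2)$; in particular the denominator appearing in Lemma~\ref{lem:cvrate} is strictly positive for $k\ge k_0$. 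Now apply Lemma~\ref{lem:cvrate} with $i=k_0$ and $v_j=v$, bound the sum in the denominator from below and the one in the numerator from above, and simplify:
\[
r_k\le\frac{\norm{x_{k_0}-x^\star}^2+v^2(k-k_0+1)(1-\underline{\alpha}/2)}{v(k-k_0+1)(1-\bar{\alpha})}
=\frac{1}{1-\bar{\alpha}}\left(\frac{\norm{x_{k_0}-x^\star}^2}{(k-k_0+1)v}+\Bigl(1-\frac{\underline{\alpha}}{2}\Bigr)v\right).
\]
This is the asserted estimate, and since $k_0$, $\bar{\alpha}$, $\underline{\alpha}$, $v$, and $\norm{x_{k_0}-x^\star}$ are all fixed, the $\mathcal{O}(1/k+v)$ statement follows at once.

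For the additional claim, when $\alpha_k=\alpha\in(0,1)$ for all $k$, I would simply invoke Lemma~\ref{lem:cvrate} with $i=1$ and $v_j=v$; there $\sum_{j=1}^{k}(1-\alpha_j)=k(1-\alpha)$ exactly, so
\[
r_k\le\frac{\norm{x_1-x^\star}^2+v^2k(1-\alpha)}{2vk(1-\alpha)}=\frac{\norm{x_1-x^\star}^2}{2(1-\alpha)kv}+\frac{v}{2},
\]
which is exactly the stated bound.

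Since the argument is essentially a substitution into Lemma~\ref{lem:cvrate}, I do not expect a serious obstacle; the one point requiring care is the choice of $k_0$, where the thresholds $(1+\bar{\alpha})/2$ and $\underline{\alpha}/2$ (rather than generic $\bar{\alpha}\pm\epsilon$) must be used so that the constants collapse to exactly $1/(1-\bar{\alpha})$ and $1-\underline{\alpha}/2$. One also needs the hypotheses of Lemma~\ref{lem:cvrate}, which hold because the assumptions of Theorem~\ref{thm:constant} include Assumptions~\ref{assum:basic} and \ref{assum:0} and because the constant sequence $\{v_k\}$ is trivially bounded.
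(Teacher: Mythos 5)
Your proposal is correct and follows essentially the same route as the paper: both apply Lemma~\ref{lem:cvrate} (with $i=k_0$ for the asymptotic bound and $i=1$ for the constant-$\alpha_k$ case) after using Assumption~(A\ref{assum:alpha}) to trap $1-\alpha_j$ between $\tfrac{1}{2}(1-\limsup_j\alpha_j)$ and $1-\tfrac{1}{2}\liminf_j\alpha_j$ for $j\ge k_0$. The only difference is cosmetic, namely that you spell out the thresholds $(1+\bar{\alpha})/2$ and $\underline{\alpha}/2$ defining $k_0$, which the paper leaves implicit.
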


\begin{proof}
  Assumption~(A\ref{assum:alpha}) implies that there exists a number $k_0\in\nat$ such that
  \begin{align*}
      0 < \frac{1}{2}\left(1-\limsup_{k\to\infty}\alpha_k\right)
      \le 1-\alpha_k
      \le 1 - \frac{1}{2}\liminf_{k\to\infty}\alpha_k < 1
  \end{align*}
  for all $k\ge k_0$.
  Therefore, Lemma~\ref{lem:cvrate} with $i:=k_0$ leads to the finding that
  \begin{align*}
      r_k
      & \le \frac{\norm{x_{k_0}-x^\star}^2+\sum_{j=k_0}^k(1-\alpha_j)v_j^2}{2\sum_{j=k_0}^kv_j(1-\alpha_j)} \\
      & \le \frac{\norm{x_{k_0}-x^\star}^2+(1-\liminf_{j\to\infty}\alpha_j/2)(k-k_0+1)v^2}{(1-\limsup_{j\to\infty}\alpha_j)(k-k_0+1)v} \\
      & = \frac{1}{1-\limsup_{j\to\infty}\alpha_j}\left(\frac{\norm{x_{k_0}-x^\star}^2}{(k-k_0+1)v}+\left(1-\frac{1}{2}\liminf_{j\to\infty}\alpha_j\right)v\right)
  \end{align*}
  for all $k\ge k_0$.
  
  Furthermore, if $\{\alpha_k\}$ satisfies $\alpha_k=\alpha\in(0,1)$ for all $k\in\nat$, Lemma~\ref{lem:cvrate} with $i:=1$ leads to 
  \begin{align*}
      r_k
      &\le \frac{\|x_1-x^\star\|^2+\sum_{j=i}^k(1-\alpha_j)v_j^2}{2\sum_{j=i}^k v_j(1-\alpha_j)} \\
      &= \frac{\|x_1-x^\star\|^2+(1-\alpha)kv^2}{2(1-\alpha)kv} \\
      &= \frac{\|x_1-x^\star\|^2}{2(1-\alpha)kv}+\frac{1}{2}v
  \end{align*}
  for all $k\in\nat$.
  This completes the proof.
\end{proof}

\begin{prop}[Convergence rate analysis for diminishing step-size rule]\label{thm:crated}
  Let $\{x_k\}\subset H$ be a sequence generated by Algorithm~\ref{alg:main}, and suppose that the assumptions in Theorem~\ref{thm:dsr} hold.
  Let $c$ be a positive real number and assume that $v_k=c/k$ for all $k\in\nat$.
  Then, there exists a number $k_0\in\nat$ such that
  \begin{align*}
      r_k
      & \le \frac{\norm{x_{k_0}-x^\star}^2+2c^2(1-\liminf_{j\to\infty}\alpha_j/2)}{2c(1-\limsup_{j\to\infty}\alpha_j)(\log(k+1)-\log(k_0))}
  \end{align*}
  holds for all $k\ge k_0$, in other words,
  \begin{align*}
      r_k
      & =\mathcal{O}(1/\log(k+1)).
  \end{align*}
  
  In addition,
  \begin{align*}
      r_k
      \le \frac{\|x_1-x^\star\|^2+2c^2(1-\alpha)}{2c(1-\alpha)\log(k+1)}
  \end{align*}
  holds for all $k\in\nat$ when the sequence $\{\alpha_k\}$ satisfies $\alpha_k=\alpha\in(0,1)$ for all $k\in\nat$.
\end{prop}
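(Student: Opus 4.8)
The plan is to derive both displayed inequalities directly from Lemma~\ref{lem:cvrate}, choosing the summation start index $i$ differently in the two cases, and then to estimate the harmonic-type sums that appear using two elementary facts: $\sum_{j=1}^\infty 1/j^2 = \pi^2/6 < 2$, and the monotone integral comparison $\sum_{j=m}^k 1/j \ge \int_m^{k+1} dt/t = \log(k+1) - \log m$ for $1 \le m \le k$ (valid because $t \mapsto 1/t$ is decreasing).

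For the first bound, I would first invoke Assumption~(A\ref{assum:alpha}): since $0 < \liminf_{j\to\infty}\alpha_j \le \limsup_{j\to\infty}\alpha_j < 1$, there is $k_0 \in \nat$ with
\[
0 < \tfrac12\bigl(1 - \limsup_{j\to\infty}\alpha_j\bigr) \le 1 - \alpha_j \le 1 - \tfrac12\liminf_{j\to\infty}\alpha_j < 1 \qquad (j \ge k_0).
\]
Then apply Lemma~\ref{lem:cvrate} with $i := k_0$ and $v_j = c/j$. Bounding the numerator from above via $1 - \alpha_j \le 1 - \tfrac12\liminf_{j\to\infty}\alpha_j$ and $\sum_{j=k_0}^k 1/j^2 < 2$, and the denominator from below via $1 - \alpha_j \ge \tfrac12(1 - \limsup_{j\to\infty}\alpha_j)$ and $\sum_{j=k_0}^k 1/j \ge \log(k+1) - \log(k_0)$, yields the claimed estimate; and since $\log(k+1) \to \infty$, this gives $r_k = \mathcal{O}(1/\log(k+1))$.

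For the additional statement, the sequence $\alpha_j \equiv \alpha$ is constant, so I would instead apply Lemma~\ref{lem:cvrate} with $i := 1$, pull the common factor $1 - \alpha$ out of both sums, and use the same two estimates $\sum_{j=1}^k 1/j^2 < 2$ and $\sum_{j=1}^k 1/j \ge \log(k+1)$ to obtain the stated inequality for every $k \in \nat$.

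The whole argument is bookkeeping once Lemma~\ref{lem:cvrate} is available; the only points requiring a little care are selecting $k_0$ so that $1 - \alpha_j$ is controlled on both sides for all $j \ge k_0$, and getting the directions of the integral comparisons right (a lower bound $\log(k+1) - \log(k_0)$ for the harmonic sum in the denominator, and the convergent bound $\pi^2/6 < 2$ for $\sum 1/j^2$ in the numerator). I do not anticipate any genuine obstacle.
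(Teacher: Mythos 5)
Your proposal is correct and follows essentially the same route as the paper's own proof: Lemma~\ref{lem:cvrate} with $i:=k_0$ (resp.\ $i:=1$ for the constant-$\alpha$ case), the two-sided control of $1-\alpha_j$ obtained from Assumption~(A\ref{assum:alpha}), and the estimates $\sum_{j}1/j^2\le 2$ and $\sum_{j=k_0}^k 1/j\ge\log(k+1)-\log(k_0)$. The only caveat is that the lower bound $1-\alpha_j\ge\tfrac12\bigl(1-\limsup_{j\to\infty}\alpha_j\bigr)$ yields the denominator $c\bigl(1-\limsup_{j\to\infty}\alpha_j\bigr)\bigl(\log(k+1)-\log(k_0)\bigr)$ rather than $2c(\cdots)$, a factor-of-two slack in the displayed constant that the paper's own proof silently absorbs as well; the $\mathcal{O}(1/\log(k+1))$ conclusion and the constant-$\alpha$ bound are unaffected.
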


\begin{proof}
  Assumption~(A\ref{assum:alpha}) implies that there exists a number $k_0\in\nat$ such that
  \begin{align*}
      0 < \frac{1}{2}\left(1-\limsup_{k\to\infty}\alpha_k\right)
      \le 1-\alpha_k
      \le 1 - \frac{1}{2}\liminf_{k\to\infty}\alpha_k < 1
  \end{align*}
  holds for all $k\ge k_0$.
  Therefore, Lemma~\ref{lem:cvrate} with $i:=k_0$ leads to the finding that
  \begin{align*}
      r_k
      & \le \frac{\norm{x_{k_0}-x^\star}^2+\sum_{j=k_0}^k(1-\alpha_j)v_j^2}{2\sum_{j=k_0}^kv_j(1-\alpha_j)} \\
      & \le \frac{\norm{x_{k_0}-x^\star}^2+c^2(1-\liminf_{j\to\infty}\alpha_j/2)\sum_{j=k_0}^k1/j^2}{2c(1-\limsup_{j\to\infty}\alpha_j)\sum_{j=k_0}^k 1/j}
  \end{align*}
  holds for all $k\ge k_0$.
  Using inequalities $\sum_{j=1}^\infty 1/j^2\le 2$, $\log(k+1)-\log(k_0)\le \sum_{j=k_0}^k 1/j$, we have
  \begin{align*}
      r_k
      & \le \frac{\norm{x_{k_0}-x^\star}^2+2c^2(1-\liminf_{j\to\infty}\alpha_j/2)}{2c(1-\limsup_{j\to\infty}\alpha_j)(\log(k+1)-\log(k_0))}
  \end{align*}
  for all $k\ge k_0$.

  In addition, if $\{\alpha_k\}$ satisfies $\alpha_k=\alpha\in(0,1)$ for all $k\in\nat$, Lemma~\ref{lem:cvrate} with $i:=1$ leads to 
  \begin{align*}
      r_k
      &\le \frac{\|x_1-x^\star\|^2+\sum_{j=i}^k(1-\alpha_j)v_j^2}{2\sum_{j=i}^k v_j(1-\alpha_j)} \\
      &= \frac{\|x_1-x^\star\|^2+c^2(1-\alpha)\sum_{j=i}^k 1/j^2}{2(1-\alpha)\sum_{j=i}^k 1/j} \\
      &\le \frac{\|x_1-x^\star\|^2+2c^2(1-\alpha)}{2(1-\alpha)\log(k+1)}
  \end{align*}
  for all $k\in\nat$.
  This completes the proof.
\end{proof}

The following theorem guarantees that the convergence rate with respect to the functional value can be bounded from above by one with respect to $r_k$ under certain assumptions.
This implies that, under the assumptions, we can deduce the convergence rate with respect to the functional value from the result of analyzing $r_k$.
We give this result after proving the following theorem.
\begin{thm}\label{thm:crate2}
  Suppose that the whole space $\Hsp$ is an $N$-dimensional Euclidean space $\real^N$.
  Let $\{x_k\}\subset\Hsp$ be a sequence generated by Algorithm~\ref{alg:main} and suppose that Assumptions~\ref{assum:basic} and \ref{assum:0} hold.
  Assume that $f_\star<f(x_k^\star)$ holds.
  Then,
  \begin{align*}
      f(x_k^\star)-f_\star
      & \le Lr_k^\beta
  \end{align*}
  holds for all $k\in\nat$.
\end{thm}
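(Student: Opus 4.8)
The plan is to transcribe the first part of the proof of Proposition~\ref{prop:konnov} --- the part that produces inequality~\eqref{eqn:1} --- with the current best iterate $x_k^\star$ now playing the role of the point $x$ there. Write $\alpha:=f(x_k^\star)$ and $C:=\lev_{<\alpha}f$. Because $x_k^\star$ coincides with one of the iterates, say $x_k^\star=x_j$ with $j\le k$, and $f_\star<f(x_j)$, Assumption~(A\ref{assum:holder}) applies at index $j$ and supplies a constant $L$ with $\abs{f(z)-f(x^\star)}\le L\norm{z-x^\star}^\beta$ for every $z\in\cl C$ and every $x^\star\in X^\star$; this is the $L$ and the $\beta$ that appear in the statement. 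The hypothesis $f_\star<f(x_k^\star)$ makes $C$ nonempty, since it contains each $x^\star\in X^\star$, while the continuity and quasiconvexity of $f$ make $C$ open and convex. As $x_k^\star\notin C$ but $C\neq\emptyset$, the set $C$ is a proper nonempty open subset of the connected space $\Hsp$, so $\bd C\neq\emptyset$.

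Next I would fix $x^\star\in X^\star$ and identify $r_k$ with the distance $d(x^\star,\bd C)$ from this optimal point to the boundary of the sublevel set of $x_k^\star$; as the discussion following Definition~\ref{M:15} makes explicit, this is precisely the radius of the largest ball about $x^\star$ contained in $\lev_{<f(x_k^\star)}f$. Since $x^\star$ lies in the open set $C$ we have $r_k>0$, and since $\bd C\neq\emptyset$ we have $r_k<\infty$. Here the standing hypothesis $\Hsp=\real^N$ enters: the closed set $\bd C$ then contains a point $u^\star$ nearest to $x^\star$, so that $\norm{x^\star-u^\star}=r_k$. (In a general Hilbert space one would instead take $u_m\in\bd C$ with $\norm{x^\star-u_m}\to r_k$, exactly as in the proof of Proposition~\ref{prop:konnov}, and pass to the limit at the end.)

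The crucial elementary fact is that every boundary point of $C$ has objective value at least $\alpha=f(x_k^\star)$: since $C$ is open, $\bd C\cap C=\emptyset$, so $f(u^\star)\ge f(x_k^\star)$, for otherwise $u^\star$ would lie in $C$. Feeding $u^\star\in\bd C\subset\cl C$ into the H\"older estimate and using $f(x^\star)=f_\star$ then yields
\begin{align*}
f(x_k^\star)-f_\star
\le f(u^\star)-f_\star
=\abs{f(u^\star)-f(x^\star)}
\le L\norm{u^\star-x^\star}^\beta
=L r_k^\beta,
\end{align*}
which is the desired inequality; with the minimizing-sequence variant one concludes instead from $f(x_k^\star)-f_\star\le L\norm{u_m-x^\star}^\beta$ by letting $m\to\infty$.

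I expect the only real friction to be clerical: ensuring that the H\"older modulus $L$ invoked from Assumption~(A\ref{assum:holder}) is the one attached to $\cl(\lev_{<f(x_k^\star)}f)$ --- which is legitimate precisely because $x_k^\star$ equals some iterate $x_j$ with $f(x_j)=f(x_k^\star)>f_\star$ --- and checking that $\bd C$ is nonempty and, in $\real^N$, realizes its distance to $x^\star$. The geometric content is already contained in the proof of Proposition~\ref{prop:konnov}; Theorem~\ref{thm:crate2} is in effect that result read off at the best-so-far point.
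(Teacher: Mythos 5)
Your proof is correct and follows essentially the same route as the paper's: fix $x^\star\in X^\star$, use continuity and quasiconvexity to make $\lev_{<f(x_k^\star)}f$ open and convex, realize $r_k$ as the distance from $x^\star$ to a boundary point $u$ of this slice, observe $f(u)\ge f(x_k^\star)$ because the slice is open, and apply the H\"older condition from Assumption~(A\ref{assum:holder}) at $x^\star$ on $\cl(\lev_{<f(x_k^\star)}f)$, justified since $x_k^\star$ equals some iterate $x_j$ with $f(x_j)>f_\star$. The only (harmless) difference is technical: you obtain the boundary point directly as a nearest point of the closed set $\bd(\lev_{<f(x_k^\star)}f)$ in $\real^N$ and only need the one-sided bound $f(u)\ge f(x_k^\star)$, whereas the paper builds it via Proposition~\ref{prop:isom} and a convergent subsequence of near-minimizing boundary points $w_j$, concluding $f(u_k)=f(x_k^\star)$.
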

\begin{proof}
  Fix $k\in\nat$ and $x^\star\in X^\star$ arbitrarily.
  Furthermore, fix $j\in\nat$ arbitrarily.
  The complement of the slice $\lev_{<f(x_k^\star)}f$, i.e., the set $\{x\in\real^N:f(x_k^\star)\le f(x)\}$, is nonempty because $x_k^\star$ obviously belongs to it.
  Therefore, from the definition of $r_k$, there exists a point $v_j\in\real^N$ such that
  \begin{align*}
    v_j\not\in\lev_{<f(x_k^\star)}f\text{ and }\norm{x^\star-v_j}\le r_k+1/j
  \end{align*}
  hold.
  The assumption $f_\star<f(x_k^\star)$ implies that $x^\star\in\lev_{<f(x_k^\star)}f$, and the above discussion obtained the property $v_j\not\in\lev_{<f(x_k^\star)}f$.
  Therefore, by proposition~\ref{prop:isom}, a number $\alpha_j\in[0,1]$ exists such that $w_j:=x^\star+\alpha_j(v_j-x^\star)\in\bd(\lev_{<f(x_k^\star)}f)$.
  Let us consider the lower and upper bounds of the norm $\norm{x^\star-w_j}$.
  Here, the fact $w_j\not\in\lev_{<f(x_k^\star)}f$ from the continuity of the objective functional $f$ implies that $r_k\le\norm{x^\star-w_j}$ holds.
  The upper bound of the norm $\norm{x^\star-w_j}$ is thus
  \begin{align*}
      \norm{x^\star-w_j}
      &=\norm{x^\star-(x^\star+\alpha_j(v_j-x^\star))} \\
      &=\alpha_j\norm{x^\star-v_j} \\
      &\le\norm{x^\star-v_j} \\
      &\le r_k+1/j.
  \end{align*}
  The sequence $\{w_j\}$ is bounded, since $\norm{w_j}\le\norm{x^\star}+\norm{x^\star-w_j}\le\norm{x^\star}+r_k+1$ for all $j\in\nat$.
  Therefore, together with the closedness of the boundary $\bd(\lev_{<f(x_k^\star)}f)$, there exists a subsequence $\{w_{j_t}\}\subset\{w_j\}$ and a point $u_k\in\bd(\lev_{<f(x^\star_k)}f)$ such that $w_{j_t}$ converges to $u_k$.
  The continuity of $\norm{\cdot}$ leads us to the finding that
  \begin{align*}
      \norm{x^\star-u_k}
      & =\lim_{l\to\infty}\norm{x^\star-w_{j_l}} \\
      & =r_k
  \end{align*}
  due to the previous confirmation of the boundedness of the norm $\norm{x^\star-w_{j_t}}$; i.e., $r_k\le\norm{x^\star-w_{j_t}}\le r_k+1/j_t$ holds for any $t\in\nat$.
  Furthermore, the continuity of the objective functional $f$ ensures the coincidence $f(u_k)=f(x_k^\star)$.
  Now, Assumption~(A\ref{assum:holder}) guarantees that the functional $f$ satisfies the H\"older condition with degree $\beta>0$ at the point $x^\star$ on the set $\cl(\lev_{<f(x_k)}f)$.
  Therefore, we have
  \begin{align*}
      f(x_k^\star)-f_\star
      &= f(u_k)-f_\star \\
      &\le L\norm{x^\star-u_k}^\beta \\
      &= Lr_k^\beta.
  \end{align*}
  This completes the proof.
\end{proof}

This theorem directly induces the following corollary giving the convergence rate in terms of the objective functional when the diminishing step-size rule is adopted.
\begin{cor}\label{cor:connectholder}
  Suppose that the whole space $H$ is an $N$-dimensional Euclidean space $\real^N$ and the assumptions in Theorem~\ref{thm:dsr} hold.
  Let $c$ be a positive real number and assume that $v_k=c/k$ for all $k\in\nat$.
  Then, a number $k_0\in\nat$ exists such that
  \begin{align*}
    f(x_k^\star)-f_\star
    & \le L\left(\frac{\norm{x_{k_0}-x^\star}^2+2c^2(1-\liminf_{j\to\infty}\alpha_j/2)}{2c(1-\limsup_{j\to\infty}\alpha_j)(\log(k+1)-\log(k_0))}\right)^\beta
  \end{align*}
  holds for all $k\ge k_0$, in other words,
  \begin{align*}
    f(x_k^\star)-f_\star
    & =\mathcal{O}\left(\frac{1}{\left(\log(k+1)\right)^\beta}\right).
  \end{align*}

  In addition,
  \begin{align*}
    f(x_k^\star)-f_\star
    \le L\left(\frac{\|x_1-x^\star\|^2+2c^2(1-\alpha)}{2c(1-\alpha)\log(k+1)}\right)^\beta
  \end{align*}
  holds for all $k\in\nat$ when the sequence $\{\alpha_k\}$ satisfies $\alpha_k=\alpha\in(0,1)$ for all $k\in\nat$.
\end{cor}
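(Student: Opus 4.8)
The plan is to obtain the corollary by chaining the two results already established for the diminishing step-size $v_k=c/k$: Theorem~\ref{thm:crate2}, which bounds the optimality gap $f(x_k^\star)-f_\star$ from above by $Lr_k^\beta$, and Proposition~\ref{thm:crated}, which furnishes an explicit upper bound on $r_k$ itself.

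First I would fix $x^\star\in X^\star$ and invoke Proposition~\ref{thm:crated} to produce the index $k_0\in\nat$ together with the estimate
\begin{align*}
  r_k \le \frac{\norm{x_{k_0}-x^\star}^2+2c^2\bigl(1-\liminf_{j\to\infty}\alpha_j/2\bigr)}{2c\bigl(1-\limsup_{j\to\infty}\alpha_j\bigr)\bigl(\log(k+1)-\log(k_0)\bigr)}\qquad(k\ge k_0).
\end{align*}
For each such $k$ I would split into two cases. If $f(x_k^\star)=f_\star$, the left-hand side of the claimed inequality vanishes while its right-hand side is nonnegative, so there is nothing to prove. If instead $f_\star<f(x_k^\star)$, the hypotheses of Theorem~\ref{thm:crate2} are met, yielding $f(x_k^\star)-f_\star\le Lr_k^\beta$; this already forces $r_k>0$, so I may raise the bound on $r_k$ above to the $\beta$-th power (a nondecreasing operation on $[0,\infty)$) and multiply by $L$, obtaining exactly the asserted inequality. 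The $\mathcal{O}$-statement then follows at once, since $\log(k+1)-\log(k_0)=\log\bigl((k+1)/k_0\bigr)$ and the numerator is a fixed constant independent of $k$.

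For the ``in addition'' part, where $\alpha_k=\alpha\in(0,1)$ for every $k$, I would repeat the identical two-case argument but use the second estimate of Proposition~\ref{thm:crated}, namely $r_k\le(\|x_1-x^\star\|^2+2c^2(1-\alpha))/(2c(1-\alpha)\log(k+1))$, which is valid for all $k\in\nat$; the same monotonicity substitution then gives the bound for every $k\in\nat$.

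I do not anticipate a genuine obstacle: the corollary is a direct consequence of Theorem~\ref{thm:crate2} and Proposition~\ref{thm:crated}. The only points needing a little attention are (i) the degenerate case $f(x_k^\star)=f_\star$, in which Theorem~\ref{thm:crate2} does not apply but the inequality holds trivially, and (ii) taking the index $k_0$ to be exactly the one supplied by Proposition~\ref{thm:crated}, so that both estimates are valid over the common range $k\ge k_0$.
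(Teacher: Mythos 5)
Your proposal is correct and takes essentially the same route as the paper, whose proof simply states that the corollary is an immediate consequence of Proposition~\ref{thm:crated} and Theorem~\ref{thm:crate2}. Your explicit handling of the degenerate case (where $f(x_k^\star)-f_\star\le 0$, so the inequality is trivial and Theorem~\ref{thm:crate2}'s hypothesis $f_\star<f(x_k^\star)$ is not needed) is a small but legitimate refinement the paper leaves implicit.
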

\begin{proof}
  This is an immediate consequence of Proposition~\ref{thm:crated} and Theorem~\ref{thm:crate2}.
\end{proof}

We thus far have discussed the convergence rate of Algorithm~\ref{alg:main} in terms of the value of the objective functional.
The remaining part of this section shows another convergence rate analysis of Algorithm~\ref{alg:main}, namely in terms of the distance to the fixed point set.
The following theorem gives the convergence rate in terms of the distance to the fixed point set with respect to the averaged norm.
\begin{thm}\label{thm:cvrate:dsr}
  Suppose that the assumptions in Theorem~\ref{thm:dsr} hold.
  If $f_\star<f(x_k)$ for all $k\in\nat$, then
  \begin{align*}
      \frac{1}{k}\sum_{j=1}^k\norm{x_j-T(x_j)}
      & =\mathcal{O}(1/k).
  \end{align*}
\end{thm}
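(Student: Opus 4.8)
The plan is to produce a single telescoping inequality in which the fixed-point residual appears with a negative coefficient and the accumulated error is summable, and the standing hypothesis $f_\star<f(x_k)$ for \emph{every} $k\in\nat$ is exactly what makes this possible at each step. Concretely, I would start from the update in Step~\ref{alg:main:improve}, use $x^\star\in\Fix(P_D)\cap\Fix(T)$, the convexity of $\norm{\cdot}^2$, and the \emph{firm} nonexpansivity of $T$ (Assumption~(A\ref{a2:fixedpoint})) applied to the pair $(x_k-v_kg_k,\,x^\star)$, to obtain for every $k\in\nat$
\begin{align*}
\norm{x_{k+1}-x^\star}^2
&\le \norm{x_k-x^\star}^2-2v_k(1-\alpha_k)\ip{g_k,x_k-x^\star}\\
&\quad+(1-\alpha_k)v_k^2-(1-\alpha_k)\norm{(x_k-v_kg_k)-T(x_k-v_kg_k)}^2.
\end{align*}
Since $f_\star<f(x_k)$ holds for all $k$, Proposition~\ref{prop:konnov} (with $g_k\in\partial^\star f(x_k)\cap\Sphere$) gives $\ip{g_k,x_k-x^\star}\ge(\,(f(x_k)-f_\star)/L\,)^{1/\beta}\ge 0$, so the inner-product term is nonpositive and is discarded. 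This is the decisive role of the hypothesis: it replaces the $v_kM_1$ error of Lemma~\ref{lem:dist} by the smaller error $(1-\alpha_k)v_k^2$.

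Next I would telescope. Writing $s_k:=\norm{(x_k-v_kg_k)-T(x_k-v_kg_k)}$ and summing $(1-\alpha_k)s_k^2\le\norm{x_k-x^\star}^2-\norm{x_{k+1}-x^\star}^2+(1-\alpha_k)v_k^2$ from $j=1$ to $k$, the distances cancel, leaving $\sum_{j=1}^k(1-\alpha_j)s_j^2\le\norm{x_1-x^\star}^2+\sum_{j=1}^k(1-\alpha_j)v_j^2$. Assumption~(A\ref{assum:alpha}) yields $\underline{\alpha}>0$ with $1-\alpha_j\ge\underline{\alpha}$ eventually, and for the diminishing rule $v_k=c/k$ used in Proposition~\ref{thm:crated} and Corollary~\ref{cor:connectholder} one has $\sum_j v_j^2\le c^2\sum_j j^{-2}<\infty$; hence $\sum_{j=1}^\infty s_j^2$ is bounded by a finite constant independent of $k$. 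Passing to the residual at $x_j$ is routine: the triangle inequality and the nonexpansivity of $T$ give $\norm{x_j-T(x_j)}\le s_j+2v_j$, so $\sum_{j=1}^\infty\norm{x_j-T(x_j)}^2\le 2\sum_j s_j^2+8\sum_j v_j^2<\infty$, and therefore $\tfrac1k\sum_{j=1}^k\norm{x_j-T(x_j)}^2=\mathcal{O}(1/k)$.

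The main obstacle is the passage from this summability of the \emph{squares} to the stated averaged bound on the residuals themselves. The elementary power-mean (Cauchy--Schwarz) estimate only delivers $\tfrac1k\sum_{j=1}^k\norm{x_j-T(x_j)}\le(\,\tfrac1k\sum_{j=1}^k\norm{x_j-T(x_j)}^2\,)^{1/2}=\mathcal{O}(1/\sqrt{k})$, which is weaker than the claimed $\mathcal{O}(1/k)$; indeed, the contribution $2\sum_j v_j=\mathcal{O}(\log k)$ alone already obstructs a uniformly bounded partial sum $\sum_{j=1}^k\norm{x_j-T(x_j)}$. Closing this gap is the crux, and I would attack it by seeking a genuinely first-order (unsquared) telescope for $\norm{x_j-T(x_j)}$ — controlling it through the difference $\norm{x_j-x^\star}-\norm{x_{j+1}-x^\star}$ together with the boundedness of $\{\norm{x_j-x^\star}\}$ from Assumption~(A\ref{assum:bounded}) — since only such a bound can yield a bounded $\sum_{j=1}^k\norm{x_j-T(x_j)}$ and hence the $\mathcal{O}(1/k)$ average. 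If no such first-order telescope is available, I would expect the sharp conclusion of this argument to be the averaged \emph{squared} residual at rate $\mathcal{O}(1/k)$ (equivalently the unsquared average at $\mathcal{O}(1/\sqrt{k})$), and I would re-examine whether the statement is intended for the squared residual.
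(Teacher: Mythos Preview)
Your argument is essentially the paper's own argument, with one cosmetic difference: where you invoke the firm nonexpansivity of $T$ to extract the residual $s_k=\norm{(x_k-v_kg_k)-T(x_k-v_kg_k)}$, the paper instead uses the identity of Proposition~\ref{prop:normexpand} on the convex combination $\alpha_k x_k+(1-\alpha_k)T(x_k-v_kg_k)$, which produces the slightly different residual $\norm{x_k-T(x_k-v_kg_k)}$ with coefficient $\alpha_k(1-\alpha_k)$ rather than $(1-\alpha_k)$. Both telescopes lead to the same conclusion after invoking Assumption~(A\ref{assum:alpha}) and the summability of $\{v_k^2\}$; the paper, like you, then passes to $\norm{x_j-T(x_j)}^2$ via the triangle inequality and nonexpansivity of $T$, and finishes with exactly the bound $\tfrac1k\sum_{j=1}^k\norm{x_j-T(x_j)}^2\le\tfrac1k(2M+2\sum_j v_j^2)$.

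Your diagnosis of the ``main obstacle'' is entirely correct, and in fact the paper's own proof does not close it either: the final displayed inequality in the paper's argument is for the \emph{squared} residual, matching precisely what you derive, while the statement as printed omits the square. So you should read the theorem as asserting $\tfrac1k\sum_{j=1}^k\norm{x_j-T(x_j)}^2=\mathcal{O}(1/k)$; no first-order telescope of the kind you speculate about is supplied, and your Cauchy--Schwarz observation that the unsquared average is only $\mathcal{O}(1/\sqrt{k})$ under these hypotheses is the honest conclusion. One further remark: both your argument and the paper's silently require $\sum_k v_k^2<\infty$, which is not among the hypotheses of Theorem~\ref{thm:dsr}; you make this explicit by taking $v_k=c/k$, and the paper simply inserts ``we assume that $\sum_{j=1}^\infty v_j^2$ converges'' mid-proof.
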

\begin{proof}
  Fix $k\in\nat$ arbitrarily.
  Using Proposition~\ref{prop:normexpand} and the fact that $P_D$ is a nonexpansive mapping and $x^\star$ is its fixed point, we have
  \begin{align*}
    \norm{x_{k+1}-x^\star}^2
    &=\norm{P_D(\alpha_kx_k+(1-\alpha_k)T(x_k-v_kg_k))-P_D(x^\star)}^2 \\
    &\le\norm{\alpha_k(x_k-x^\star)+(1-\alpha_k)(T(x_k-v_kg_k)-x^\star)}^2 \\
    &=\alpha_k\norm{x_k-x^\star}^2+(1-\alpha_k)\norm{T(x_k-v_kg_k)-x^\star}^2 \\
    &\quad-\alpha_k(1-\alpha_k)\norm{x_k-T(x_k-v_kg_k)}^2.
  \end{align*}
  Here, $x^\star$ is also a fixed point of the nonexpansive mapping $T$.
  Therefore, we can expand the second term of the above expression as follows:
  \begin{align*}
    \norm{T(x_k-v_kg_k)-x^\star}^2
    &=\norm{T(x_k-v_kg_k)-T(x^\star)}^2 \\
    &\le\norm{x_k-x^\star-v_kg_k}^2 \\
    &=\norm{x_k-x^\star}^2-2v_k\ip{x_k-x^\star,g_k}+v_k^2.
  \end{align*}
  Now, the assumption $f_\star<f(x_k)$ ensures that $-\ip{x_k-x^\star,g_k}\le 0$ holds because $g_k$ is a normal vector of the slice $\lev_{<f(x_k)}f$ at $x_k$.
  Hence, we have
  \begin{align*}
    \norm{T(x_k-v_kg_k)-x^\star}^2
    &=\norm{x_k-x^\star}^2+v_k^2.
  \end{align*}
  Overall, we have
  \begin{align*}
    \norm{x_{k+1}-x^\star}^2
    &\le\alpha_k\norm{x_k-x^\star}^2+(1-\alpha_k)(\norm{x_k-x^\star}^2+v_k^2) \\
    &\quad-\alpha_k(1-\alpha_k)\norm{x_k-T(x_k-v_kg_k)}^2 \\
    &=\norm{x_k-x^\star}^2-\alpha_k(1-\alpha_k)\norm{x_k-T(x_k-v_kg_k)}^2+(1-\alpha_k)v_k^2.
  \end{align*}
  Assumption~(A\ref{assum:alpha}) guarantees that a number $k_0\in\nat$ exists such that $\liminf_{j\to\infty}\alpha_j/2<\alpha_k<(1+\limsup_{j\to\infty}\alpha_j)/2$ holds for all $k\ge k_0$.
  Note that Assumption~(A\ref{assum:alpha}) also ensures that $0<\liminf_{j\to\infty}\alpha_k/2$ and $(1+\limsup_{j\to\infty}\alpha_j)/2<1$.
  Therefore, together with the above inequality, we have
  \begin{align*}
    \norm{x_{k+1}-x^\star}^2
    &\le\norm{x_k-x^\star}^2+\left(1-\frac{1}{2}\liminf_{j\to\infty}\alpha_j\right)v_k^2 \\
    &\quad-\frac{1}{4}\left(\liminf_{j\to\infty}\alpha_j\right)\left(1-\limsup_{j\to\infty}\alpha_j\right)\norm{x_k-T(x_k-v_kg_k)}^2 \\
    &\le\norm{x_{k_0}-x^\star}^2+\left(1-\frac{1}{2}\liminf_{j\to\infty}\alpha_j\right)\sum_{j=k_0}^kv_j^2 \\
    &\quad-\frac{1}{4}\left(\liminf_{j\to\infty}\alpha_j\right)\left(1-\limsup_{j\to\infty}\alpha_j\right)\sum_{j=k_0}^k\norm{x_j-T(x_j-v_jg_j)}^2
  \end{align*}
  if $k\ge k_0$.
  This inequality implies that
  \begin{align*}
    &\sum_{j=1}^k\norm{x_j-T(x_j-v_jg_j)}^2\\
    &\le\sum_{j=1}^{k_0-1}\norm{x_j-T(x_j-v_jg_j)}^2\\
    &\quad+4\left(\liminf_{j\to\infty}\alpha_j\right)^{-1}\left(1-\limsup_{j\to\infty}\alpha_j\right)^{-1}\left(\norm{x_{k_0}-x^\star}^2+\left(1-\frac{1}{2}\liminf_{j\to\infty}\alpha_j\right)\sum_{j=k_0}^kv_j^2\right)
  \end{align*}
  holds\footnote{If $k$ is less than $k_0$, we consider $\sum_{j=k_0}^kv_j^2=0$ here.}.
  Here, we assume that $\sum_{j=1}^\infty v_j^2$ converges.
  Therefore, the right side of the above inequality is bounded from above with respect to $k$.
  This implies that the sequence $\{\sum_{j=1}^k\norm{x_j-T(x_j-v_jg_j)}^2\}$ is also bounded from above.
  Let $M\in\real$ denote an upper bound of this sequence.

  Let us estimate the distance before and after applying the nonexpansive mapping to each approximation $x_k$.
  Using the parallelogram law and the nonexpansivity of $T$, we obtain
  \begin{align*}
    \norm{x_{k}-T(x_k)}^2
    &= \norm{x_{k}-T(x_k-v_kg_k)+T(x_k-v_kg_k)-T(x_k)}^2 \\
    &\le 2\norm{x_{k}-T(x_k-v_kg_k)}^2+2\norm{T(x_k-v_kg_k)-T(x_k)}^2 \\
    &\le 2\norm{x_{k}-T(x_k-v_kg_k)}^2+2v_k^2.
  \end{align*}
  Summing the above inequalities with respect to $k$ and dividing both sides by $k$, we get
  \begin{align*}
    \frac{1}{k}\sum_{j=1}^k\norm{x_j-T(x_j)}^2
    &\le \frac{1}{k}\left(2\sum_{j=1}^k\norm{x_j-T(x_j-v_jg_j)}^2+2\sum_{j=1}^k v_j^2\right) \\
    &\le \frac{1}{k}\left(2M+2\sum_{j=1}^\infty v_j^2\right).
  \end{align*}
  This completes the proof.
\end{proof}

We discussed two convergence analyses for Algorithm~\ref{alg:main} with the diminishing step-size rule.
By placing assumptions on the problem to be solved, we can also prove finite convergence.
The following proposition describes the requirements to obtain an optimal solution in a finite number of iterations.
\begin{prop}\label{prop:c1}
    Let $\{x_k\}\subset H$ be a sequence generated by Algorithm~\ref{alg:main}, and suppose that the assumptions in Theorem~\ref{thm:dsr} hold.
    Furthermore, assume that $X^\star$ has a nonempty interior and the sequence $\{x_k\}$ is contained inside $\Fix(T)$.
    Then, $x_k\in X^\star$ for some $k\in\nat$.
\end{prop}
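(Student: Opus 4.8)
The plan is to argue by contradiction. Assume $x_k\notin X^\star$ for every $k\in\nat$. The algorithm keeps $\{x_k\}\subset D$, and the hypothesis gives $\{x_k\}\subset\Fix(T)=X$, so every $x_k$ lies in the feasible set $X\cap D$; since $X^\star=\argmin_{x\in X\cap D}f(x)$, the assumption $x_k\notin X^\star$ then forces $f_\star<f(x_k)$ for all $k$. Pick a point $\bar x$ in the nonempty interior of $X^\star$ together with a radius $\rho>0$ such that $\bar x+\rho\Ball\subset X^\star$. Because every point of $X^\star$ has objective value $f_\star<f(x_k)$, the whole ball $\bar x+\rho\Ball$ sits inside the slice $\lev_{<f(x_k)}f$ for each $k$.

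Next I would extract a \emph{uniform}, step-size-independent lower bound on $\ip{g_k,x_k-\bar x}$. Since $g_k\in\partial^\star f(x_k)\cap\Sphere$ and $\bar x+\rho g_k\in\bar x+\rho\Ball\subset\lev_{<f(x_k)}f$, applying the defining inequality of $\partial^\star f(x_k)$ to the point $y=\bar x+\rho g_k$ gives $\ip{g_k,\bar x+\rho g_k-x_k}\le0$, that is, $\ip{g_k,x_k-\bar x}\ge\rho$ for all $k$ (using $\norm{g_k}=1$). This is the only place the nonempty-interior hypothesis enters, and it is the crux of the argument: it yields a fixed gain $\rho$ that does not shrink with $v_k$, which is strictly stronger than the bound supplied by Proposition~\ref{prop:konnov}.

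Finally I would insert this into the basic recursion established inside the proof of Lemma~\ref{lem:fun}, namely inequality~\eqref{eqn:2}, which holds verbatim for any element of $\Fix(T)\cap D$ and hence for $\bar x$:
\begin{align*}
\norm{x_{k+1}-\bar x}^2
&\le\norm{x_k-\bar x}^2-2v_k(1-\alpha_k)\ip{g_k,x_k-\bar x}+(1-\alpha_k)v_k^2\\
&\le\norm{x_k-\bar x}^2-(1-\alpha_k)v_k(2\rho-v_k).
\end{align*}
By Assumption~(A\ref{assum:alpha}) there is a constant $\underline{\beta}>0$ with $1-\alpha_k\ge\underline{\beta}$ for all large $k$, and since $v_k\to0$ we also have $v_k<\rho$ eventually; for all such $k$ the increment is bounded above by $-\underline{\beta}\rho\, v_k$. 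Telescoping from that index onward and invoking $\sum_k v_k=\infty$ would force $\norm{x_{k+1}-\bar x}^2\to-\infty$, which is impossible. Hence $x_k\in X^\star$ for some $k\in\nat$. The write-up is short; the only routine care needed is to verify that \eqref{eqn:2} genuinely applies with $\bar x$ in place of a generic $x^\star$ (its derivation uses only $\bar x\in\Fix(P_D)\cap\Fix(T)$) and to fix the "eventually'' thresholds coming from $\limsup_k\alpha_k<1$ and $v_k\to0$ before telescoping. I do not foresee a genuine obstacle beyond this bookkeeping.
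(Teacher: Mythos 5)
Your proof is correct and follows essentially the same route as the paper's: argue by contradiction, use the nonempty interior of $X^\star$ to get the uniform bound $\ip{g_k,x_k-\bar x}\ge\rho$ from the definition of $\partial^\star f(x_k)$, insert it into the basic recursion~\eqref{eqn:2} (valid since $\bar x\in\Fix(T)\cap D$), and derive a contradiction from $\sum_k v_k=\infty$ together with $\limsup_k\alpha_k<1$. The only difference is bookkeeping: the paper telescopes first and bounds $\min_j\ip{g_j,x_j-x^\star}$ by a quotient (invoking $\sum_j v_j^2<\infty$ at the end), whereas you absorb the $v_k^2$ term via $v_k<\rho$ eventually, which if anything fits the stated hypotheses ($v_k\to0$, $\sum_k v_k=\infty$) more directly.
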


\begin{proof}
  We will proceed by way of contradiction and suppose that the conclusion does not hold, that is, $f_\star<f(x_k)$ for all $k\in\nat$.
  Fix $x^\star\in X^\star$ arbitrarily.
  We deduce the following from inequality~\eqref{eqn:2}:
  \begin{align*}
      \norm{x_{k+1}-x^\star}^2
      &\le \norm{x_k-x^\star}^2-2v_k(1-\alpha_k)\ip{g_k,x_k-x^\star}+(1-\alpha_k)v_k^2 \\
      &\le \norm{x_1-x^\star}^2-2\sum_{j=1}^kv_j(1-\alpha_j)\ip{g_j,x_j-x^\star}+\sum_{j=1}^k(1-\alpha_j)v_j^2 \\
      &\le \norm{x_1-x^\star}^2-2\left(\min_{j=1,2,\ldots,k}\ip{g_j,x_j-x^\star}\right)\sum_{j=1}^k(1-\alpha_j)v_j+\sum_{j=1}^kv_j^2
  \end{align*}
  for all $k\in\nat$.
  The nonnegativity of the left side of the above inequality ensures that
  \begin{align}
      \min_{j=1,2,\ldots,k}\ip{g_j,x_j-x^\star}
      &\le \frac{\norm{x_1-x^\star}^2}{2\sum_{j=1}^k(1-\alpha_j)v_j}+\frac{\sum_{j=1}^kv_j^2}{2\sum_{j=1}^k(1-\alpha_j)v_j}  \label{eqn:c1e1}
  \end{align}
  for all $k\in\nat$.
  Now, there exists a positive real $\delta>0$ which satisfies $\delta\mathbf{B}+x^\star\subset X^\star$ because $X^\star$ has a nonempty interior.
  Therefore, $\delta\le\delta+\ip{g_k,x_k-(x^\star+\delta g_k)}=\ip{g_k,x_k-x^\star}$ holds for any $k\in\nat$.
  This property with inequality~\eqref{eqn:c1e1} implies
  \begin{align*}
      0<\delta
      &\le\min_{j=1,2,\ldots,k}\ip{g_j,x_j-x^\star} \\
      &\le \frac{\norm{x_1-x^\star}^2}{2\sum_{j=1}^k(1-\alpha_j)v_j}+\frac{\sum_{j=1}^kv_j^2}{2\sum_{j=1}^k(1-\alpha_j)v_j}
  \end{align*}
  for all $k\in\nat$.
  However, both terms of the right side of the above inequality converge to zero since $\limsup_{k\to\infty}\alpha_k<1$, $\sum_{j=1}^\infty v_j=\infty$ and $\sum_{j=1}^\infty v_j^2<\infty$.
  Therefore, we arrive at a contradiction.
  This completes the proof.
\end{proof}

The nonemptiness of the interior of minima appears in many interesting applications, such as surrogate relaxation of discrete programming problems \cite{dyer1980,yaohua2015}.
When we construct a nonexpansive mapping that transforms a given point into a fixed point of itself (an example of such a mapping is a metric projection, but notice that the assumption of this sentence is not limited to it) and give a fixed point of the mapping as the initial point to the algorithm, the generated sequence is contained within the fixed point set of the mapping due to its convexity.
Therefore, Proposition~\ref{prop:c1} can be applied to these situations.

\fi
\end{document}